\theoremstyle{plain}
\newtheorem{theorem}{Theorem}[section]
\newtheorem{lemma}{Lemma}[section]
\newtheorem{proposition}{Proposition}[section]
\newtheorem{remark}{Remark}[section]
\newtheorem{definition}{Definition}[section]
\renewcommand{\epsilon}{\varepsilon}
\newcommand{\SLC}{\text{SL}_2\left(\mathbb{C}\right)}
\newcommand{\PSLC}{\text{PSL}_2\left(\mathbb{C}\right)}
\newcommand{\SL}{\text{SL}_2\left(\mathbb{R}\right)}
\newcommand{\PSL}{\text{PSL}_2\left(\mathbb{R}\right)}
\newcommand{\PSLo}{\text{PSL}_2\left(\mathfrak{o}_d\right)}
\newcommand{\ch}[1]{\left\lfloor #1 \right\rceil}
\newcommand{\UT}{\mathrm{T}^1}
\newcommand{\abs}[1]{\left\lvert#1\right\rvert}
\newcommand\blfootnote[1]{%
  \begingroup
  \renewcommand\thefootnote{}\footnote{#1}%
  \addtocounter{footnote}{-1}%
  \endgroup
}
\title{Complex continued fractions, Kleinian and extremal theory for cusp excursions}
\author[1]{Alexander Baumgartner\thanks{Corresponding author.}}
\author[2]{Mark Pollicott}
\affil[1]{{\small Current Address: Centro di Ricerca Matematica \enquote{Ennio De Giorgi}, Scuola Normale Superiore di Pisa, Italy} }
\affil[1,2]{University of Warwick, Mathematics Institute, United Kingdom}
\affil[ ]{{alexander.baumgartner@sns.it}\\{masdbl@warwick.ac.uk}}
\begin{document}
\maketitle

\blfootnote{MSC Numbers: Primary 37D40, 11K50, Secondary 37A44, 28D05 \\
keywords: Bianchi group, continued fraction, Diophantine approximation, cusp excursion, ergodic theory\\ This research is part of a project that has received funding from the European Union’s Horizon 2020 research and innovation programme under ERC Advanced Grant No. 833802-Resonances.}
\abstract{For the each of the five Euclidean rings of complex quadratic integers, we consider a complex continued fraction algorithm with digits in the ring. We show for each algorithm that the maximal digit obeys a Fréchet distribution. We use this to find a limiting distribution for cusp excursions on Bianchi orbifolds associated with the aforementioned rings of quadratic integers.}

\section{Introduction}

\raggedright The goal of the first part of this paper is to generalise an extreme value theorem for continued fraction expansions of real numbers to the complex case. In general a \textbf{continued fraction} is an expression of the form 
\begin{equation}\label{equation: definition of continued fraction}
    z=a_0+\frac{1}{ a_1+ \frac{1}{ a_2   +\frac{1}{ {a_3}_{\ddots}}}}.
\end{equation}
We denote the right hand side by $[a_0,a_1,a_2,\ldots]$. In the context of this paper, the digits $a_0,a_1,\ldots$ are in a ring of complex quadratic integers. In the more  familiar case of regular continued fractions where $a_n \in \mathbb N$ and $z\in (0,\infty)$, the choice of the digits $a_n$ in \eqref{equation: definition of continued fraction} is unique when $z$ is irrational. For this choice of algorithm, we have the following striking result due to Galambos \cite{Galambos1972}.
\begin{proposition}[Galambos]\label{proposition: Galambos}
 For regular continued fractions, if $u>0$, then 
 \[\lim_{N\to\infty} \nu\left\{z\in[0,1]: \max_{1\leq n\leq N}a_n \leq \frac{uN}{\log2} \right\} = e^{-1/u} ,\]
 where $d\nu = (\log2)^{-1}dz/(1+z)$ is the Gauss measure on $[0,1]$.  
\end{proposition}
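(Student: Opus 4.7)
The plan is to realise the partial quotients $a_n$ as a stationary sequence under the Gauss map $T : [0,1) \to [0,1)$, $T(z) = \{1/z\}$, and then carry out a standard extreme-value argument for weakly dependent sequences. Writing $a_n(z) = \lfloor 1/T^{n-1}z \rfloor$, a direct computation gives the marginal
\[\nu(a_1 \geq k) = \nu\bigl([0, 1/k]\bigr) = \log_{2}\!\left(1 + \frac{1}{k}\right) = \frac{1}{k\log 2}\bigl(1 + O(1/k)\bigr),\]
and the $T$-invariance of $\nu$ yields $\nu(a_n \geq k) = \nu(a_1 \geq k)$ for every $n$. Setting the threshold $u_N = yN/\log 2$ and $p_N := \nu(a_1 > u_N)$, I therefore have $N p_N \to 1/y$ as $N \to \infty$.

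If the $a_n$ were independent under $\nu$, the conclusion would be immediate from $\nu\bigl(\max_{n \leq N} a_n \leq u_N\bigr) = (1-p_N)^N \to e^{-1/y}$. The problem thus reduces to showing that the partial quotients are independent enough for the iid computation to remain asymptotically valid. I would use the now-standard extreme-value route for dynamical systems: a block--gap decomposition combined with exponential decay of correlations.

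Concretely, I partition $\{1, \ldots, N\}$ into $\ell = \lfloor N/b \rfloor$ consecutive blocks of length $b$, discarding a separating gap of length $g$ between neighbours with $b, g \to \infty$, $b = o(N)$, and $g = o(b)$. On each block, the event $\{\max a_n > u_N\}$ is a pull-back of a cylinder event under $T$, so Kuzmin's theorem (equivalently, the spectral gap of the transfer operator on a suitable function space) gives exponential decay of correlations, making the events attached to different blocks asymptotically independent up to an error of order $\ell \rho^{g}$ with some $\rho < 1$. It then suffices to verify (i) that a single block contributes the right mass, $\ell \cdot \nu\bigl(\max_{n \leq b} a_n > u_N\bigr) \to 1/y$, and (ii) a Leadbetter-type anti-clustering condition, namely $N \sum_{m=2}^{b} \nu(a_1 > u_N,\, a_m > u_N) \to 0$.

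The main obstacle is the anti-clustering estimate (ii), especially for the smallest gaps $m = 2, 3, \ldots$, where exponential mixing provides no help. For these one needs a uniform bound of the shape $\nu(a_1 > u,\, a_m > u) \leq C\, \nu(a_1 > u)^2$; it follows from the bounded distortion of $T$ on fundamental cylinders together with the explicit Gauss--Kuzmin--L\'evy estimates, and is strong enough to beat the naive union bound. For larger $m$ the exponential mixing of $T$ produces geometric decay and the sum is then routinely controlled. Assembling (i) and (ii) yields a Poisson limit for the number of exceedances $\#\{n \leq N : a_n > u_N\}$, and hence the claimed Fr\'echet limit $e^{-1/y}$.
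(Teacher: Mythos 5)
The paper does not actually prove Proposition~\ref{proposition: Galambos}; it simply cites Galambos (1972). Your outline is nonetheless correct, and it is essentially the same strategy the paper deploys for its complex analogue, Theorem~\ref{theorem: Gumbel Law for Bianchi Groups}: establish the first-digit tail asymptotic, invoke exponential $\psi$-mixing of the Gauss map, and verify the Leadbetter conditions via Proposition~\ref{proposition: LLR}. One small efficiency worth noting: the uniform anti-clustering bound $\nu(a_1>u,\,a_m>u)\le C\,\nu(a_1>u)^2$ that you derive for small gaps $m$ from bounded distortion on cylinders is already a direct consequence of $\psi$-mixing, since $\psi$-mixing gives a \emph{multiplicative} error of the form $\abs{\nu(a_1>u,\,a_m>u)-\nu(a_1>u)^2}\le \alpha\rho^{\,m-2}\,\nu(a_1>u)^2$, which is harmless even at $m=2$; the bounded-distortion route is not wrong, just redundant once $\psi$-mixing is in hand, and this is exactly how the paper's Lemma~\ref{lemma: Leadbetter's lemma} dispenses with the small-gap case.
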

We obtain an analogous result for the case $z\in\mathbb{C}$ as follows. If $d=1,2,3,7$ or $11$, we can consider expressions of the form \eqref{equation: definition of continued fraction} with $a_n \in \mathfrak{o}_d$: the ring of integers of the field extension $\mathbb{Q}(\sqrt{-d})$. We generate such an expansion by requiring for all $n>0$ that the absolute value of the tail
\begin{equation}\label{equation: tails}
    [0,a_n,a_{n+1},a_{n+2},\ldots]
\end{equation}
 is as small as possible. See Section \ref{section: Complex Continued Fractions} for more details. We call this the \textbf{nearest-integer $\mathfrak{o}_d$-continued fraction expansion of $z$}. For $d=1$, this is more commonly known as the Hurwitz complex continued fraction expansion, which are treated in \cite{Hurwitz}. Let $K_1$ be the square in $\mathbb{C}$ consisting of all points with real and imaginary parts in $[-1/2,1/2]$. By \cite{Ei2019}, there exists a Borel probability measure $\mu_1$ on $K_1$ equivalent to the Lebesgue measure for which the digits of the Hurwitz continued fraction expansion form a stationary sequence. With respect to this measure, we have the following result due to Kirsebom \cite{Kirsebom}.

\begin{proposition}[Kirsebom]\label{proposition: Kirsebom}
     There exists a constant $C_1>0$ such that for any $u>0$, we have that 
    \[\lim_{N\to\infty} \mu_1\left\{z\in K_1: \max_{1\leq n\leq N}|a_n(z)| \leq C_1u\sqrt{N} \right\} = e^{-1/u^2} ,\]
    where $(a_n(z))_n$ are the digits in the Hurwitz complex continued fraction expansion of $z$.
\end{proposition}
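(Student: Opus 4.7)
The plan is to follow the standard extreme value theory strategy for stationary mixing sequences, going back to Galambos for real continued fractions. The Hurwitz digits $(a_n(z))_{n\ge 1}$ are generated by iteration of the Gauss-type map $T:K_1\to K_1$, $T(z)=1/z-a_1(z)$, where $a_1(z)$ is the Gaussian integer nearest to $1/z$. Since $\mu_1$ is $T$-invariant and equivalent to Lebesgue with bounded density, the sequence $(|a_n|)_n$ is stationary on $(K_1,\mu_1)$, and the claim becomes a Fréchet limit theorem for the maxima of this sequence.

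The first step is a tail estimate for a single digit. The event $\{|a_1(z)|>t\}$ differs from $\{|z|<1/t\}$ only on a set of Lebesgue measure $O(t^{-3})$, because $a_1(z)$ is within distance $\sqrt{2}/2$ of $1/z$. Integrating the density of $\mu_1$ over the disk $\{|z|<1/t\}\cap K_1$ then yields
\[
\mu_1\{|a_1|>t\} = \frac{C_1^2}{t^2} + O(t^{-3}), \qquad t\to\infty,
\]
where $C_1$ is determined by the value of the invariant density at the origin. Setting $u_N:=C_1y\sqrt{N}$ one has $N\mu_1\{|a_1|>u_N\}\to 1/y^2$, which is exactly the scaling required for a Fréchet limit $e^{-1/y^2}$.

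The second step is to verify the two Leadbetter conditions $D(u_N)$ and $D'(u_N)$ for $(|a_n|)_n$. Condition $D(u_N)$, asymptotic independence of exceedances at widely separated times, follows from exponential decay of correlations for $T$ (equivalently, $\psi$-mixing of the digit sequence with exponential rate), obtained via quasi-compactness of an appropriate transfer operator for the Hurwitz map on a Banach space of functions of bounded variation. Condition $D'(u_N)$, the absence of short-range clustering, requires uniform two-point bounds of the form $\mu_1\{|a_1|>t,\,|a_j|>t\}=O(t^{-4})$ together with summability in $j$. Geometrically, $\{|a_1|>t\}$ confines $z$ to a small disk near the origin, and bounded distortion of the inverse branches of $T$ shows that within each such disk the further constraint $\{|a_j|>t\}$ occupies a proportion of area comparable to $\mu_1\{|a_1|>t\}$ again.

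Once both conditions are in place, the classical extreme value theorem for stationary sequences yields
\[
\lim_{N\to\infty}\mu_1\Bigl\{\max_{1\le n\le N}|a_n|\le u_N\Bigr\} = \exp\!\bigl(-\lim_{N\to\infty}N\mu_1\{|a_1|>u_N\}\bigr) = \exp(-1/y^2),
\]
as desired. I expect the main obstacle to be the anti-clustering condition $D'(u_N)$: while intuitively clear, the uniform two-point tail bound requires a careful description of the cylinders $\{a_1=b_1,\,a_2=b_2\}$ with $|b_1|$ large, and uses essentially that the invariant density of $\mu_1$ does not concentrate near the origin enough to allow consecutive large digits to cluster. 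Tail estimates and mixing are comparatively routine consequences of the known ergodic theory of the Hurwitz map.
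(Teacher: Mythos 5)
Your proposal follows essentially the same route as Kirsebom's original argument and as the paper's proof of Theorem \ref{theorem: Gumbel Law for Bianchi Groups} specialized to $d=1$: a tail asymptotic for $\mu_1\{|a_1|>t\}$, exponential $\psi$-mixing of the digit sequence from the transfer operator, and Leadbetter's conditions $D$ and $D'$ to conclude the Fréchet limit. The only point I'd flag is the phrase ``$C_1$ is determined by the value of the invariant density at the origin'': the invariant density for the Hurwitz map is not continuous at $0$ but only \emph{piecewise} real-analytic, with $0$ a common vertex of several pieces $V_k$ of the partition in Lemma \ref{lemma: bounded equivalence with Lebesgue measure}. The paper's Lemma \ref{lemma: chance of large digits} handles this by writing $\mu_1(D(1/t))=\sum_k f_k(1/t)$, passing to polar coordinates on each $V_k$ with smoothly varying angular bounds $\theta_1(r),\theta_2(r)$, and applying Taylor's theorem; the resulting constant $H$ (hence $C_1=H^{-1/2}$) is a weighted angular average of the one-sided limits of the branches $g_k$ at $0$, not a single pointwise value. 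This is a presentation issue rather than a genuine gap, and your mechanism for $D'$ — bounded distortion giving a uniform $O(t^{-4})$ two-point tail bound — is the geometric phrasing of the $\psi$-mixing inequality the paper invokes directly in Lemma \ref{lemma: Leadbetter's lemma}.
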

This distribution is a special case of the \textbf{Fréchet Distribution}. The proof of Proposition \ref{proposition: Kirsebom} relies on several properties of the Hurwitz continued fraction algorithm discovered in \cite{Ei2019}. Based on recent work by Ei, Nakada and Natsui in \cite{Ei23+}, we extend Kirsebom's result to other values of $d$.

\begin{theorem}[Fréchet Law for the Maximal Digit]\label{theorem: Gumbel Law for Bianchi Groups}
    Let $d=1,2,3,7,11$. Let $K_d$ be the set of all points in $\mathbb{C}$ for which the closest element of $\mathfrak{o}_d$ is $0$. There exists a constant $C_d>0$ depending on $d$ such that for any $u>0$, we have that 
    \[\lim_{N\to\infty} \eta\left\{z\in K_d: \max_{1\leq n\leq N}|a_n(z)| \leq C_du\sqrt{N} \right\} = e^{-1/u^2} ,\]
    where $(a_n(z))_n$ are the digits in the nearest-integer $\mathfrak{o}_d$-continued fraction expansion of $z$ and $\eta$ is any probability measure that is absolutely continuous with respect to the Lebesgue measure. In particular, $C_d$ does not depend on $\eta$. 
\end{theorem}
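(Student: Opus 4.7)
The plan is to follow the structure of Kirsebom's proof of Proposition \ref{proposition: Kirsebom}, replacing each input from the Hurwitz case by its analogue for $\mathfrak{o}_d$ supplied by \cite{Ei23+}. First I would set up a Gauss-type shift $T_d:K_d\to K_d$ whose iterates generate the digits $a_n(z)$, together with an absolutely continuous invariant probability measure $\mu_d$ of bounded density making $(a_n)_n$ a stationary sequence. The work of Ei, Nakada and Natsui provides, for each $d\in\{1,2,3,7,11\}$, the Markov-like structure, the distortion estimate, and the exponential decay of correlations of the associated transfer operator that are used in the Hurwitz case.

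The second step is to pin down the tail of a single digit. Each cylinder $\{z\in K_d:a_1(z)=a\}$ is, up to boundary, the image under $w\mapsto 1/w$ of the shifted cell $K_d+a$, and therefore has $\mu_d$-measure comparable to $|a|^{-4}$. Summing over elements of $\mathfrak{o}_d$ of norm greater than $R$, and using that the number of $a\in\mathfrak{o}_d$ with $|a|\le r$ grows like a constant times $r^2$, yields
\[\mu_d\bigl\{z\in K_d:\;|a_1(z)|>R\bigr\}=\frac{c_d}{R^2}+O(R^{-3})\qquad(R\to\infty),\]
with an explicit constant $c_d$ determined by the density of $\mu_d$ at $0$ and the covolume of $\mathfrak{o}_d$. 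The constant in the statement is then $C_d=\sqrt{c_d}$, so that $N\,\mu_d\{|a_1|>C_d y\sqrt N\}\to 1/y^2$.

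Third, I would lift this single-digit asymptotic to a statement about the maximum over $N$ steps. Since $a_n$ depends only on a small neighbourhood of the $n$-th iterate, events $\{|a_n|>R\}$ and $\{|a_m|>R\}$ with $|n-m|$ large enough are approximately independent, with error controlled by the decay of correlations of $T_d$. Using this I would verify analogues of Leadbetter's conditions $D$ and $D'$ for the stationary sequence $(|a_n|)_n$, from which the classical extreme-value theorem for stationary mixing sequences produces
\[\mu_d\Bigl\{\max_{1\leq n\leq N}|a_n(z)|\leq C_d y\sqrt N\Bigr\}\;\longrightarrow\;e^{-1/y^2}.\]
Finally, to promote the result from $\mu_d$ to an arbitrary absolutely continuous probability measure $\eta$, I would write $d\eta=\varphi\,d\mu_d$ with $\varphi\in L^1(\mu_d)$ and use exponential mixing: because the event $\{\max_{1\leq n\leq N}|a_n|\leq C_d y\sqrt N\}$ is measurable with respect to the future after time $N_0$ up to arbitrarily small error, the densities become indistinguishable in the limit, giving the same constant $C_d$.

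The main obstacle is the second and third steps, taken together: the tail estimate and the verification of the clustering conditions require handling explicitly the geometry of $K_d$, which is no longer the square $K_1$ but a more intricate fundamental domain whose boundary contains the images of several overlapping inversion cells. The Markov partition of \cite{Ei23+} has, for each $d$, a small but genuinely different family of cell types, and both the leading coefficient $c_d$ and the distortion bounds feeding into the mixing estimate have to be computed case by case. Once the bookkeeping of these finitely many cells is carried out, the extreme-value argument then proceeds uniformly across the five Euclidean rings.
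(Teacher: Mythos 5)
Your proposal follows essentially the same route as the paper: invariant measure $\mu_d$ with bounded density from Ei--Nakada--Natsui, a quadratic tail asymptotic for $|a_1|$, Leadbetter's stationary extreme-value theorem driven by the $\psi$-mixing property of the digit sequence, and a final measure-change step. Two places where your sketch and the paper diverge in method are worth noting.

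First, your tail estimate sums $\mu_d$-measures of cylinders $\langle a\rangle \asymp |a|^{-4}$ over $a\in\mathfrak{o}_d$ with $|a|>R$. The paper instead applies the inversion $\iota(z)=1/z$ to the whole set $\{|a_1|>t\}$, showing it is sandwiched between two discs $D(1/(t\pm1))$ and then proving the function $x\mapsto\mu_d(D(x))$ is $C^3$ at $x=0^+$. The delicate point, which your sketch under-sells, is that the density of $\mu_d$ is \emph{not} a single analytic function near $0$: the structure supplied by Ei--Nakada--Natsui is $d\mu_d = C\sum_k g_k\mathbf{1}_{V_k}\,d\lambda$ with $0$ a vertex of several cells $V_k$, each carrying a different real-analytic $g_k$. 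So the density typically has jump discontinuities across the boundaries $\partial V_k$ through $0$, and the constant $c_d$ is an average over sectors, not simply ``the density of $\mu_d$ at $0$ times a covolume'' as you write. Your cylinder-sum would also need direction-dependent second-order expansions per cell to reach the $O(R^{-3})$ error, which in the end requires the same regularity input; the paper's disc argument packages this more cleanly via polar coordinates and Leibniz's rule applied per cell.

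Second, for the passage from $\mu_d$ to arbitrary absolutely continuous $\eta$, you argue informally that the maximum event becomes asymptotically tail-measurable so the two densities are indistinguishable. This is morally correct, but the paper makes it rigorous in one stroke via Eagleson's theorem: it suffices that $|R_N\circ G - R_N|\Rightarrow 0$ under $\mu_d$, where $R_N=\max_{n\le N}|a_n|/(C_d\sqrt{N})$, and this follows because $a_n\circ G=a_{n+1}$ together with the mixing estimate. If you intend to prove the final step without Eagleson you will need to make your "up to arbitrarily small error" quantitative, which amounts to re-deriving a version of that theorem. Neither difference is a gap in principle, but in both places the devil is in the details and the paper's choices avoid some bookkeeping that your plan would have to confront.
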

\begin{remark}
    For simplicity of exposition, we have stated Theorem \ref{theorem: Gumbel Law for Bianchi Groups} for continued fraction digits with largest absolute absolute value. The proof also works for the digit with the $k$-th largest absolute value. In that case, we must replace the Fréchet law $u\mapsto e^{-1/u^2}$ with the Poisson law $u\mapsto e^{-1/u^2}\sum_{m=0}^k u^{-2m}/m^2$.
\end{remark}

The goal of the second part of this paper is to use Theorem \ref{theorem: Gumbel Law for Bianchi Groups} to study cusp excursions of the geodesic flow on certain three dimensional analogues of the modular surface, which date back to Bianchi \cite{Bianchi1892}.  We recall that the group of orientation-preserving isometries of the three-dimensional hyperbolic space $\mathbf{H}^3=\{x+yi+rj:x,y\in\mathbb{R}, r>0\}$ can be identified with the projective special linear group $\PSLC$, see Section \ref{section: Bianchi Orbifolds}. We call the subgroup $\PSLo$ consisting of all the matrices in $\PSLC$ with entries in $\mathfrak{o}_d$ a \textbf{Bianchi group}. The quotient space $\mathbf{P}_d =\PSLo\backslash \mathbf{H}^3$ is called a \textbf{Bianchi orbifold}. We denote the unit tangent bundle of $\mathbf{P}_d$, i.e. the suborbifold of the tangent bundle consisting of vectors of unit length, by $\UT \mathbf{P}_d$.
\begin{theorem}\label{theorem: cusp excursions on Euclidean Bianchi Orbifolds}
    Let $\mathbf{P}_d$ be a Bianchi orbifold corresponding to the imaginary quadratic field extension $\mathbb{Q}[\sqrt{-d}]$ for $d=1,2,3,7,11$. For a vector $v\in\UT\mathbf{P}_d$, let $v(t)$ be the vector obtained by applying the geodesic flow for some time $t$. Let $h: \UT\mathbf{P}_d\to [0,1) $ be the function which returns the distance to the horosphere $r=1$. There exists a constant $\kappa_d>0$ such that 
    \[\lim_{T\to\infty} m\left\{ v\in \UT\mathbf{P}_d : \sup_{0 \leq t\leq T} h(v(t)) -\frac{1}{2}\log T \leq  \log(u)+\kappa_d\right\}=e^{-1/u^2},\]
    where $m$ is the Liouville measure, i.e. the natural volume on $\UT \mathbf{P}_d$.
\end{theorem}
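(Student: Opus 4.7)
The plan is to deduce Theorem~\ref{theorem: cusp excursions on Euclidean Bianchi Orbifolds} from Theorem~\ref{theorem: Gumbel Law for Bianchi Groups} via the classical correspondence between the geodesic flow on the Bianchi orbifold $\mathbf{P}_d$ and the nearest-integer $\mathfrak{o}_d$-continued fraction algorithm. For $d=1,2,3,7,11$ the field $\mathbb{Q}(\sqrt{-d})$ has class number one, so $\mathbf{P}_d$ has a single cusp, which I take at $\infty$ with parabolic stabiliser the translations by $\mathfrak{o}_d$. Working in the upper half-space model of $\mathbf{H}^3$, the geodesic flow on $\UT\mathbf{P}_d$ admits a Poincar\'e cross-section whose first-return map is conjugate, up to a set of measure zero, to the Gauss-like shift generating the expansion of Section~\ref{section: Complex Continued Fractions}; this is the symbolic coding built from the natural extension in \cite{Ei23+}.

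The geometric input I would use is quantitative: the maximal hyperbolic height reached by the geodesic during the $n$-th return to the cross-section equals $c_d|a_n(z)|$ up to an additive $O(1)$, for a constant $c_d>0$ depending only on $d$; and the hyperbolic distance from any vector $v$ in a fixed compact part of $\UT\mathbf{P}_d$ to a vector based at height $h$ above the cusp is $\log h + O(1)$. Combining these gives
\[
    \sup_{0\leq t\leq T} d(v,v(t)) \;=\; \log \max_{1\leq n\leq N(T)}|a_n(z)| + O(1),
\]
where $N(T)$ denotes the number of returns to the cross-section completed by time $T$, and the $O(1)$ absorbs both the bounded contribution of intervals spent away from the cusp and the dependence on the reference vector. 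Applying the Birkhoff ergodic theorem to the return-time cocycle of the cross-section produces a constant $\lambda_d>0$, equal to the mean flow-time per continued fraction step, with $N(T)/T \to 1/\lambda_d$ almost surely.

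Next, Theorem~\ref{theorem: Gumbel Law for Bianchi Groups}, applied to the projection of the Liouville measure onto $K_d$ (which is absolutely continuous with respect to the Lebesgue measure and hence admissible), gives that $\max_{n\leq N}|a_n| \leq C_d y\sqrt{N}$ with probability tending to $e^{-1/y^2}$. Substituting $N=\lfloor T/\lambda_d\rfloor$ and plugging into the display above yields
\[
    m\!\left\{v\in \UT\mathbf{P}_d : \sup_{0\leq t\leq T} d(v,v(t)) - \tfrac{1}{2}\log T \leq \log y + \alpha_d\right\} \longrightarrow e^{-1/y^2},
\]
where $\alpha_d=\log(c_dC_d/\sqrt{\lambda_d})$, which is the form claimed in the theorem.

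The main obstacle is the replacement of the \emph{random} index $N(T)$ by the deterministic integer $\lfloor T/\lambda_d \rfloor$ inside the Fr\'echet limit. Since the scaling for the maximum of the digits is $\sqrt{N}$, it suffices to show that the fluctuation $N(T)-T/\lambda_d$ is $o(\sqrt{T})$ with probability tending to one; this should follow from standard central-limit-type estimates for the return-time cocycle of the Gauss map $\Phi_d$ under the stationary measure constructed in \cite{Ei23+}, combined with the continuity of the Fr\'echet distribution. A secondary obstacle is verifying the geometric coding uniformly over $v\in \UT\mathbf{P}_d$; by ergodicity of the Liouville measure and invariance of the problem under the geodesic flow, this reduces to proving the statement on a compact set of positive measure, after which the $O(1)$ errors are uniform and the extension to the full measure is automatic.
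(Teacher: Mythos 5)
Your overall strategy---project $v$ to its forward endpoint in $K_d$, relate the maximal digit to the maximal cusp depth, and rescale time by the mean step length---is essentially the strategy of the paper, and your formula for $\alpha_d$ matches (the paper takes $c_d=1/2$, $\lambda_d = C^*$, giving $\alpha_d=\log(C_d/(2\sqrt{C^*}))$). However, there is a genuine gap in the central step. You assert that the geodesic flow on $\UT\mathbf{P}_d$ ``admits a Poincar\'e cross-section whose first-return map is conjugate\dots to the Gauss-like shift,'' citing the natural extension from \cite{Ei23+}. The paper explicitly disclaims precisely this: it states in Section~\ref{section: cusp excursions} that no such suspension-flow/cross-section model over $\hat{G}$ is known for the nearest-integer $\mathfrak{o}_d$-algorithms, and in Remark~\ref{remark: consistency with suspension flow} treats the existence of such a model as a hypothetical that would give an \emph{alternative} proof. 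The obstacle, which your proposal glosses over, is that when $\abs{a_n}\le 2$ the hemispheres $H(0)$ and $H((-1)^{n-1}a_n)$ overlap, so the naive intersection times $t_n$ with $H(0)$ need not be increasing and do not form a legitimate return-time cocycle. The paper circumvents this by introducing the excursion times $t_n^*=\max_{i\le n}t_i$ (Definition~\ref{definition: excursion times}), using Lemma~\ref{lemma: subsequence of multiplicative growth} to extract a bounded-gap subsequence along which $\abs{q_n/q_{n-1}}$ is uniformly bounded below by some $r_d>1$, and then proving Lemma~\ref{lemma: excursion height is given by digits} by a direct geometric argument that sequences of small digits cannot produce deep cusp excursions. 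Any correct proof must confront this non-monotonicity.

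Two smaller remarks. First, your ``main obstacle'' of bounding $N(T)-T/\lambda_d$ by $o(\sqrt T)$ is more than is needed: since $N\mapsto\max_{n\le N}\abs{a_n}$ is monotone and the Fr\'echet law $y\mapsto e^{-1/y^2}$ is continuous, the almost-sure convergence $t_n^*/n\to C^*$ (Proposition~\ref{proposition: asymptotic formula for excursion times}, which follows from Birkhoff applied to $\log\abs{G^k\beta}$, not from a CLT) already allows sandwiching $N(T)$ between $(1\pm\varepsilon)T/C^*$ and passing to the limit in $\varepsilon$. Second, the sentence ``the maximal hyperbolic height\dots equals $c_d\abs{a_n(z)}$ up to an additive $O(1)$'' conflates Euclidean and hyperbolic scales (the Euclidean apex height is $\approx\abs{a_n}/2$ and its hyperbolic distance to the reference horosphere is $\log(\abs{a_n}/2)+O(\abs{a_n}^{-1})$), though your subsequent display is stated correctly.
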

Furthermore, we manage to obtain an exact expression for the constant $\kappa_d$ in Subsection \ref{subsection: calculating alpha}.

\begin{remark}
    If we instead look at the $k$-th largest cusp excursion, we must again replace the Fréchet law $u\mapsto e^{-1/u^2}$ with the Poisson law $u\mapsto e^{-1/u^2}\sum_{m=0}^k u^{-2m}/m^2$.
\end{remark}
Theorem \ref{theorem: cusp excursions on Euclidean Bianchi Orbifolds} is a higher-dimensional analogue of the following result due to the second author \cite{Pollicott2007LIMITINGDF}. Let $\mathbf{M}$ be the modular surface and let $m_\mathbf{M}$ be the Liouville measure for its unit tangent bundle $\UT\mathbf{M}$.  
\begin{proposition}[Pollicott]\label{proposition: Geodesic Excursion on M}
Let $u>0$. Then
\[\lim_{T\to\infty} m_\mathbf{M}\left\{ v\in \UT\mathbf{M} : \sup_{0 \leq t\leq T} h(v(t)) -\log T \leq \log\left(\frac{3u}{\pi^2}\right)\right\}=e^{-1/u},\]
where $h$ returns to distance to the horocycle $y=1$.
\end{proposition}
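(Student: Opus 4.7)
My plan is to reduce the geodesic excursion problem on $\UT\mathbf{M}$ to the asymptotic behaviour of the maximal partial quotient in the regular continued fraction expansion, and then apply Galambos' theorem (Proposition \ref{proposition: Galambos}).

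First, I would code the geodesic flow on $\UT\mathbf{M}$ via Series' symbolic dynamics: for $m_\mathbf{M}$-almost every $v$, the forward endpoint of the lifted geodesic has a well-defined continued fraction expansion $[a_0;a_1,a_2,\ldots]$, and the geodesic flow is conjugate (after restriction to a horocyclic cross-section at height $1$) to a suspension over the natural extension of the Gauss map $G:[0,1]\to[0,1]$, with an integrable roof function $\tau$. By Kac's formula and the Lévy constant one computes $\int\tau\,d\nu=\pi^2/(6\log 2)$, where $\nu$ is the Gauss measure.

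Second, I would relate the geometric quantity to the partial quotients. A geodesic on $\mathbf{M}$ makes a cusp excursion each time $G$ generates a new partial quotient $a_n$; during this excursion the lifted geodesic climbs in the standard cusp neighbourhood $\{\mathrm{Im}(z)>1\}$ to maximal height $a_n/2+O(1)$, and the hyperbolic distance between the basepoint of $v$ and the peak of the excursion is $\log(a_n/2)+O(1)$. Writing $N_T$ for the number of returns to the cross-section up to time $T$, this yields
\[\sup_{0\leq t\leq T}d(v,v(t))=\log\max_{1\leq n\leq N_T}a_n-\log 2+O(1).\]

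Third, by Birkhoff's ergodic theorem applied to $\tau$, almost surely $N_T/T\to 6\log 2/\pi^2$, so $\log N_T=\log T+\log(6\log 2/\pi^2)+o(1)$. Plugging this into Proposition \ref{proposition: Galambos} applied with $N=N_T$, the event $\max_{n\leq N_T}a_n\leq yN_T/\log 2$ has probability tending to $e^{-1/y}$. Taking logarithms and combining constants,
\[\log\max_{n\leq N_T}a_n-\log 2-\log T\longrightarrow\log\frac{6y}{\pi^2}-\log 2=\log\frac{3y}{\pi^2},\]
which together with the previous display gives the claimed limiting distribution $e^{-1/y}$.

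The main obstacle is legitimising the substitution of the deterministic index $N$ in Galambos' statement by the random count $N_T$, while keeping the $O(1)$ coding errors under control. This is manageable because the fluctuation $N_T-(6\log 2/\pi^2)T$ is of smaller order than $\max_{n\leq N_T}a_n$ on the logarithmic scale, and because the Liouville measure $m_\mathbf{M}$ disintegrates over the cusp cross-section as a smooth measure equivalent to Lebesgue (hence to the Gauss measure $\nu$), so the limiting distribution computed from Proposition \ref{proposition: Galambos} transfers to $m_\mathbf{M}$ without modification.
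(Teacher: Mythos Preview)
The paper does not supply a proof of Proposition~\ref{proposition: Geodesic Excursion on M}; it is quoted from \cite{Pollicott2007LIMITINGDF} as the two-dimensional prototype for Theorem~\ref{theorem: cusp excursions on Euclidean Bianchi Orbifolds}. Your outline is exactly the strategy of that reference, and it is also the template the present paper follows in three dimensions: code the geodesic flow by continued fractions, identify the height of each cusp excursion with the corresponding digit, convert flow time $T$ into a digit count $N_T$ via the ergodic theorem for the roof function, and then invoke the extreme-value law for the digits (Proposition~\ref{proposition: Galambos} here, Theorem~\ref{theorem: Gumbel Law for Bianchi Groups} in the paper). So at the level of method there is nothing to contrast.

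One point does need tightening. You write
\[
\sup_{0\le t\le T} d(v,v(t))=\log\max_{1\le n\le N_T}a_n-\log 2+O(1),
\]
and then carry out a constant computation that silently treats the $O(1)$ as $o(1)$. If the error were genuinely $O(1)$ on the logarithmic scale the explicit threshold $\log(3y/\pi^2)$ could not be extracted: a bounded non-vanishing shift would blur the limit law into a mixture. The correct statement, parallel to Lemma~\ref{lemma: excursion height is given by digits} in this paper, is that the error is bounded on the \emph{exponential} scale, i.e.\ $\exp\bigl(\sup_t d(w_\beta,w_\beta(t))\bigr)=\tfrac12\max_n a_n+O(1)$, which after taking logarithms becomes an $O(1/\max_n a_n)=o(1)$ error. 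You should phrase the approximation at that scale. The residual basepoint dependence (passing from the reference vector $w_\beta$ on the cross-section to an arbitrary $v$ in the same fibre) is handled, as in the proof of Theorem~\ref{theorem: cusp excursions on Euclidean Bianchi Orbifolds}, by observing that any lift of $v$ with forward endpoint $\beta$ is asymptotic to the reference geodesic; your final paragraph gestures at this but stops short of saying it.
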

These results are refinements of Sullivan's logarithm law \cite{Sullivan} for the aforementioned Bianchi orbifolds and the modular surface. 
\begin{proposition}[Sullivan]
     Let $\Gamma$ be a discrete subgroup of isometries of the $n$-dimensional hyperbolic space $\mathbf{H}^n$. Assume $\Gamma\backslash\mathbf{H}^n$ is of finite volume but not compact and denote for $v,w\in \Gamma\backslash\mathbf{H}^n$ the hyperbolic distance between the respective base points in $\Gamma\backslash\mathbf{H}^n$ by $d(v,w)$. Then
\[\limsup_{t\to\infty} \frac{d(v,v(t))}{\log t} = \frac{1}{n-1},\]
for almost all vectors $v$ with respect to the Liouville measure in the unit tangent bundle of $\Gamma\backslash\mathbf{H}^n$. 
\end{proposition}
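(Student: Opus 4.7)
The plan is to establish the two inequalities in $\limsup_{t\to\infty} d(v,v(t))/\log t = 1/(n-1)$ separately: the upper bound via a first Borel--Cantelli argument that uses exponential decay of Liouville volume in the cusps, and the lower bound via a Borel--Cantelli-type argument powered by mixing of the geodesic flow.

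First I would set up the thick--thin decomposition of $M:=\Gamma\backslash\mathbf{H}^n$. By finite volume and the Margulis lemma, finitely many disjoint cusp neighborhoods cover the complement of a compact ``thick'' subset $K_0$. Each cusp is modelled by the quotient of a horoball by its parabolic stabilizer, and a direct integration in the upper half-space model shows that the depth-$\geq s$ sub-horoball $C_s$ satisfies
\[
\mathrm{vol}(C_s)\leq c_1 e^{-(n-1)s}
\]
for some $c_1>0$. Lifting to $\UT M$, the set $A_s$ of unit tangent vectors based in such a $C_s$ inherits $m(A_s)\leq c_2 e^{-(n-1)s}$. Moreover, for $v$ with basepoint in $K_0$, the distance between $v$ and $v(t)$ (measured between basepoints) agrees up to an additive constant with the deepest cusp excursion reached by time $t$, so the problem reduces to controlling these excursion depths.

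For the upper bound I would fix $\epsilon>0$, set $s_k=(1/(n-1)+\epsilon)\log k$, and apply the first Borel--Cantelli lemma to the events $E_k=\{v:v(k)\in A_{s_k}\}$ under the flow-invariant measure $m$. Since $\sum_k m(E_k)\leq c_2\sum_k k^{-1-\epsilon(n-1)}<\infty$, almost every $v$ lies in only finitely many $E_k$; Lipschitz continuity of cusp depth along the geodesic flow fills in non-integer times, yielding $\limsup d(v,v(t))/\log t\leq 1/(n-1)+\epsilon$, and $\epsilon\to 0$ concludes. For the lower bound I would take $s_k=(1/(n-1)-\epsilon)\log k$, so that $\sum_k m(A_{s_k})$ diverges, and try to show $F_k=\{v:v(k)\in A_{s_k}\}$ occur infinitely often.

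The $F_k$ are far from independent, so the hard step will be producing the required quasi-independence: I would invoke exponential decay of matrix coefficients for the geodesic flow (via the unitary representation theory of $\mathrm{SO}(n,1)$ acting on $L^2(\UT M)$) to prove $|m(F_j\cap F_k)-m(F_j)m(F_k)|\ll e^{-c|j-k|}$ for suitably smoothed indicators of $A_{s_k}$, and then apply a Kochen--Stone type lemma. The main obstacle is precisely this quasi-independence estimate, since the indicators of the shrinking targets $A_{s_k}$ must be smoothed on a scale fine enough to preserve the relevant cusp geometry yet coarse enough for mixing to yield a usable correlation bound. Sullivan's original argument sidesteps this tension by transferring to the boundary sphere $\partial\mathbf{H}^n$ and working with the Patterson--Sullivan conformal density, whereas the later Kleinbock--Margulis framework executes the quantitative argument directly on $\UT M$.
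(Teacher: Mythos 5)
The paper does not prove this proposition; it is quoted as a known result and attributed to Sullivan's paper, so there is no internal proof to compare against. Your sketch is a reasonable modern outline, closer in spirit to the Kleinbock--Margulis shrinking-target formulation than to Sullivan's original boundary argument with conformal densities --- which, as you note yourself, sidesteps the correlation estimates by working on $\partial\mathbf{H}^n$ with the Patterson--Sullivan measure and a geometric ``shadow'' covering argument. Both routes are standard; what your approach buys is a unified quantitative framework on $\UT M$ that generalizes to other homogeneous settings, at the cost of needing effective mixing.

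Two concrete gaps remain in what you wrote. First, for the lower bound you need a two-sided estimate $m(A_s)\asymp e^{-(n-1)s}$, not merely the upper bound you record; without the matching lower bound, $\sum_k m(F_k)$ need not diverge. Second, the quasi-independence step is genuinely incomplete, and for shrinking targets it is more than a technical nuisance: the decorrelation rate from exponential mixing must beat the rate at which $m(A_{s_k})$ shrinks, and the smoothing of indicators must be done at a scale that simultaneously respects the cusp geometry and the spectral gap. This is exactly the content of the Kleinbock--Margulis ``friendly'' shrinking-target lemma, and asserting a bound of the form $|m(F_j\cap F_k)-m(F_j)m(F_k)|\ll e^{-c|j-k|}$ without accounting for the $k$-dependence of the smoothing scale would not give a Kochen--Stone hypothesis one can actually verify. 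If you want to keep the argument on $\UT M$, you should state and use that lemma; otherwise, Sullivan's boundary-measure route with shadows is the shorter path.
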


When $d=1$, the fractions generated by the nearest-integer $\mathfrak{o}_d$-continued fraction expansion of $z$ are best approximations for Lebesgue-almost every $z$. This is no longer the case for other values of $d$ \cite{Lakein1973}. However, establishing the connection with cusp excursions allows us to nonetheless obtain a Diophantine approximation result.
\begin{theorem}\label{theorem: Diophantine Approximation}
    Consider the imaginary quadratic field extension $\mathbb{Q}[\sqrt{-d}]$ for $d=1,2,3,7,11$. Then if $u>0$,
    \[\lim_{Q\to\infty} \nu \left\{z\in\mathbb{C}:\abs{z-\frac{p}{q}}\leq \frac{1}{2e^{\kappa_d} \abs{q}^{2}\sqrt{\log Q}u}\text{ for some } p,q\in\mathfrak{o}_d \text{ with } \abs{q}\leq Q \right\}=1-e^{-1/u^2},\]
    where $\nu$ is any measure equivalent to the Lebesgue measure on $\mathbb{C}$.
\end{theorem}
\section{Complex continued fractions}\label{section: Complex Continued Fractions}
Given \textit{any} continued fraction expansion $[a_0,a_1,\cdots]$, we define the \textbf{associated quotient pair of sequences} $(p_n)_n$ and $(q_n)_n$ by the recursive equations
\begin{equation}\label{equation: convergents}
    \begin{split}
        p_{-2} &= 0, \quad p_{-1}=1, \quad p_{n}=a_np_{n-1}+p_{n-2}\text{ and}\\
        q_{-2} &= 1, \quad q_{-1}=0, \quad q_{n}=a_nq_{n-1}+q_{n-2} \text{ for }n\geq0.
    \end{split}
\end{equation}
It is a standard result that for all $n$, 
\begin{equation}\label{equation: determinant of matrix of convergents}
    p_{n-1}q_n-p_nq_{n-1}=(-1)^n
\end{equation}
and
\[\frac{p_n}{q_n} = [a_0,a_1,\ldots,a_n].\]
We call $p_n/q_n$ the \textbf{$n$-th convergent} of the continued fraction.

The rest of this section is devoted to describing the nearest-integer $\mathfrak{o}_d$-continued fraction expansion algorithm in detail. Recall that the ring of integers $\mathfrak{o}_d$ is defined as the set of all $\mathbb{Q}[\sqrt{-d}]$-valued solutions to the quadratic equations of the form $z^2+bz+c,$ where $b,c\in\mathbb{Z}$. If $d$ is a square-free integer, then
\[\mathfrak{o}_d = \mathbb{Z}[\omega_d] = \{n+m\omega_d:n,m\in\mathbb{Z}\},\]
where $\omega_d = \frac{(-1+\sqrt{-d})}{2}$ if $d= 3 \mod 4$ and $\omega_d = \sqrt{-d}$ otherwise.

We defined the set $K_d\subset \mathbb{C}$ in Theorem \ref{theorem: Gumbel Law for Bianchi Groups} as
\[K_d = \{z\in\mathbb{C}: |z|< |z-\lambda|\text{ for all }\lambda\in\mathfrak{o}_d\backslash\{0\}\}.\]
In other words, $K_d$ is the Dirichlet fundamental domain centred at the origin for translations by elements in $\mathfrak{o}_d$. Explicitly, this is given by 
\begin{equation}
    \begin{split}
        K_d:=&\left\{x+iy:x,y\in\mathbb{R}: |x|<\frac{1}{2}, |y|< \frac{\sqrt{d}}{2} \right\} \text{if } d=1,2\text{, and}\\
        K_d:= &\Big\{x+iy:x,y\in\mathbb{R}:|x|<\frac{1}{2}, \\
        &\abs{x-y\sqrt{d}}<\frac{d+1}{4},\abs{x+y\sqrt{d}}<\frac{d+1}{4}  \Big\}\text{ if } d=3,7,11.
    \end{split}
\end{equation}
See \cite{Ei22} for more details and for complex continued fraction algorithms associated to other choices of fundamental domains.

\begin{definition}\label{definition:Bianchi Continued Fractions}
    Let $d=1,2,3,7,11$. Define the function $\ch{\cdot}_d: \mathbb{C}\backslash \{0\}\to \mathfrak{o}_d$ such that
\[\ch{z}_d = a\]
implies that $ z\in a+\overline{K_d}$.
\end{definition}

\begin{remark}
    The function $z\mapsto \ch{z}_d$ is not uniquely determined if $z\in a+\partial K_d$ for some $a\in\mathfrak{o}_d$.  Any choice will work, however, since we will only consider properties which hold almost everywhere with respect to the Lebesgue measure.
\end{remark}
We associate to the function $\ch{\cdot}_d$ the following complex generalisation of the Gauss map
\[G_d: K_d\to K_d: z\mapsto \frac{1}{z}-\ch{\frac{1}{z}}_d.\]
Define the sequences $(z_n)_n$ and $(a_n)$ by
\begin{equation}\label{equation:Algorithm}
    \begin{split}
        &z_0 = z, a_0(z) = \ch{z_0}_d,\\
        &z_n = \frac{1}{G_d^{n-1}(z-a_0(z))}\text{, }a_n(z) = \ch{z_n}_d \text{for  }n\geq 1.
    \end{split}
\end{equation}
If the expression $[a_0,a_1,\ldots]$ converges, then $G_d^{n-1}(z-a_0(z))$ is equal to the tail \eqref{equation: tails}. Since the latter is in $K_d$, the above algorithm indeed minimizes the absolute value of the tails.
\begin{proposition}\label{proposition: growth of denominator}
    The continued fraction $[a_0,a_1,\ldots]$ with $a_i$ chosen by \eqref{equation:Algorithm}, converges for all $z\in\mathbb{C}$. It stops in a finite number of steps if and only if $z\in\mathbb{Q}[\sqrt{-d}]$. If $p_n/q_n$ is the $n$-th convergent of $z$, then
    \begin{equation}\label{equation: increasing denominators}
        \abs{\frac{q_{n-1}}{q_n}}<1
    \end{equation}
    for all $n\geq 0$. The identity
\begin{equation}\label{equation: reversing the digits}
    q_n/q_{n-1} = [a_n,a_{n-1},\ldots,a_1]
\end{equation}
also holds.
\end{proposition}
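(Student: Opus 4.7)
The plan is to separate the four assertions and tackle them in turn.

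The reversal identity (\ref{equation: reversing the digits}) is purely algebraic. I would prove it by induction on $n$: the base case $q_1/q_0 = a_1 = [a_1]$ is immediate, and for $n \ge 2$ one divides the recursion $q_n = a_n q_{n-1} + q_{n-2}$ by $q_{n-1}$ and applies the inductive hypothesis to the inverted ratio $q_{n-2}/q_{n-1} = 1/[a_{n-1},\ldots,a_1]$, giving $q_n/q_{n-1} = a_n + 1/[a_{n-1},\ldots,a_1] = [a_n,a_{n-1},\ldots,a_1]$.

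For the denominator growth (\ref{equation: increasing denominators}), the essential geometric ingredient is that in each of the five cases the Dirichlet fundamental domain $K_d$ is contained in the open unit disk. A direct calculation from the explicit descriptions yields circumradii $r_1 = \sqrt{2}/2$, $r_2 = \sqrt{3}/2$, $r_3 = 1/\sqrt{3}$, $r_7 = 2/\sqrt{7}$, $r_{11} = 3/\sqrt{11}$, all strictly less than $1$. Setting $R_n := q_{n-1}/q_n$, the recursion yields the backward dynamics $R_n = 1/(a_n + R_{n-1})$ with $R_0 = 0$, driven by the same digit sequence as the forward Gauss map $G$ on $K_d$. I would then appeal to the natural extension of $G$ analysed in \cite{Ei2019} for $d = 1$ and in \cite{Ei23+} for the remaining cases, whose invariant domain has second coordinate strictly inside the open unit disk; this immediately yields $|R_n| < 1$. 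The main obstacle is exactly this bound on $R_n$, which does \emph{not} follow from the triangle inequality alone and depends crucially on the interplay between the admissibility constraint on each digit $a_n$ (which enforces $|a_n| \ge 1/r_d$) and the Dirichlet defining inequalities for $K_d$.

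Convergence of $p_n/q_n$ to $z$ then follows by combining (\ref{equation: determinant of matrix of convergents}) with the standard M\"obius identity $z = (z_{n+1} p_n + p_{n-1})/(z_{n+1} q_n + q_{n-1})$, giving $|z - p_n/q_n| = |q_n|^{-2}\,|z_{n+1} + R_n|^{-1}$. Since $|z_{n+1}| \ge 1/r_d$ while $|R_n| < 1$, the second factor is bounded below by the positive constant $1/r_d - 1$; meanwhile a quantitative refinement of the previous paragraph (again supplied by the natural extension analysis) gives $|q_n/q_{n-1}| \ge 1/r_d$, forcing geometric growth of $|q_n|$ and hence $p_n/q_n \to z$. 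Finally, termination at step $n$ is equivalent to $G^{n-1}(z - a_0(z)) = 0$, which writes $z$ as a finite rational expression in $\mathfrak{o}_d$ and therefore places it in $\mathbb{Q}[\sqrt{-d}]$; conversely, if $z \in \mathbb{Q}[\sqrt{-d}]$ then every $z_n$ lies in the same field, and writing $z_n = \alpha_n/\beta_n$ in lowest terms in the UFD $\mathfrak{o}_d$ one checks that the positive integer $|\beta_n|^2$ strictly decreases in $n$, forcing termination in finitely many steps.
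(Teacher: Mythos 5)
Your proposal is considerably more detailed than what the paper actually provides: the paper treats the proposition as essentially a citation (Hurwitz for $d=1,3$ and Lakein for the remaining cases establish \eqref{equation: increasing denominators}, and \eqref{equation: reversing the digits} is noted to follow by induction), with convergence and termination left implicit. Your induction for \eqref{equation: reversing the digits} matches the paper, your circumradii $r_d$ are computed correctly, and your decision to appeal to the natural-extension structure of \cite{Ei2019}, \cite{Ei23+} for the key inequality $\abs{q_{n-1}/q_n}<1$ is a valid alternative to the paper's citation of \cite{Hurwitz} and \cite{Lakein1973}; you correctly emphasise that this is the genuinely non-elementary step and does not follow from the triangle inequality alone. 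Your termination argument is sound, though the appeal to lowest terms and the UFD property is superfluous: if one writes $z_n=\alpha_n/\beta_n$ with $\alpha_n,\beta_n\in\mathfrak{o}_d$ \emph{without} reducing, the recursion gives $\beta_{n+1}=\alpha_n-a_n\beta_n$, whence $\abs{\beta_{n+1}}=\abs{\beta_n}\abs{z_n-a_n}\leq r_d\abs{\beta_n}<\abs{\beta_n}$, and since the norms $\abs{\beta_n}^2$ are positive integers the sequence must terminate.

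The one step I would flag as a genuine gap is the claim that ``a quantitative refinement... gives $\abs{q_n/q_{n-1}}\geq 1/r_d$'' for \emph{every} $n$. This is stronger than what the literature, or the paper's own Lemma \ref{lemma: subsequence of multiplicative growth}, establishes: that lemma derives, from the same triangle-inequality estimates you would need, only the multi-step bound $\abs{q_{n+m}/q_n}\geq \tfrac{1-A_d}{2}A_d^{-m}$ (with $A_d=r_d$), and then extracts a \emph{subsequence} $n_k$ of bounded gaps on which $\abs{q_{n_k}/q_{n_k-1}}\geq r'$ for some $r'>1$ strictly smaller than $1/r_d$. No per-step lower bound exceeding $1$ is asserted anywhere, and the prefactor $\tfrac{1-A_d}{2}<1$ in the multi-step estimate is exactly the obstruction to promoting it to a one-step bound. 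The good news is that you do not need geometric growth to finish: from $\abs{q_{n-1}/q_n}<1$ the sequence $\abs{q_n}$ is strictly increasing, and since $\{\abs{w}:w\in\mathfrak{o}_d\}$ is a discrete subset of $[0,\infty)$, strict monotonicity already forces $\abs{q_n}\to\infty$; combined with your correct uniform bound $\abs{z_{n+1}+q_{n-1}/q_n}\geq 1/r_d - 1>0$, this yields $\abs{z-p_n/q_n}\to 0$. (Your parenthetical observation that admissible digits satisfy $\abs{a_n}\geq 1/r_d$ is, incidentally, correct --- the cell $\pm 1 + K_d$ lies inside the disc $\abs{z\mp 1}<1$, which is excluded from $\iota(K_d)$, and a check of the norm forms shows no element of $\mathfrak{o}_d$ has norm strictly between $1$ and $1/r_d^2$ --- but this digit bound does not transfer to a per-step bound on $\abs{q_n/q_{n-1}}$ by the triangle inequality, precisely for the reason you yourself identify.)
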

The proof of Equation \eqref{equation: increasing denominators} is given in \cite{Hurwitz} for $d=1,3$. A discussion of this fact for all algorithms under consideration can be found in \cite{Lakein1973}. 
 Equation \eqref{equation: reversing the digits} is a standard result and can be proved by induction.
 \begin{remark}
      The values $d= 1,2,3,7,11$ are those for which a Euclidean division algorithm exists for $\mathfrak{o}_d$ with respect to the partial order given by the absolute value. For other values of $d$, the nearest-integer algorithm does not converge. This is discussed in, e.g. \cite{Ei22}. 
 \end{remark}

From this point on, when we refer to the continued fraction expansion of a given complex number, we exclusively mean with respect to the nearest-integer $\mathfrak{o}_d$-continued fraction algorithm for $d=1,2,3,7,11$.

\section{Extremal value theory for the digits of continued fractions}
In this section, we briefly describe how to obtain Theorem \ref{theorem: Gumbel Law for Bianchi Groups} for nearest-integer $\mathfrak{o}_d$-continued fraction expansions. With the exception of a short application of Eagleson's lemma to obtain the theorem for all equivalent measures, the proof is analogous to that of Theorem 5 in \cite{Kirsebom}.  We first establish an asymptotic result for the probability that the absolute value of a given digit exceeds some large value of $t$ in Lemma \ref{lemma: chance of large digits}. The theorem then follows from a mixing result for $\psi$-mixing sequences of random variables, the definition of which we now briefly introduce. 

\begin{definition}\label{definition: psi-mixing}
    Let $\xi_{n\in \mathbb{N}}$ be a stationary sequence of random variables on a probability space $(\Omega,\mathcal{B},\nu)$. For $l\leq m \in \mathbb{N}\cup \{\infty\}$ let $\mathcal{F}_{l}^m$ be the coarsest $\sigma$-algebra such that every random variable in $\{\xi_n\}_{n=l}^m$ is measurable. The sequence is \textbf{$\psi$-mixing} if there exists a sequence $(\alpha_n)_n$ such that $\alpha_n\to 0$ as $n\to\infty$ and
    \[\abs{\nu(A\cap B) - \nu(A)\nu(B)}\leq \alpha_n\nu(A)\nu(B) \]
    for all $m\in\mathbb{N}$, $A\in \mathcal{F}_{1}^m$ and $B\in \mathcal{F}_{m+n}^\infty$.
\end{definition}
See \cite{Bradley} for an overview of more mixing conditions.

Both the asymptotics and mixing results are based on recent work by Ei,  Nakada and Natsui, who extended their previous results with Ito for Hurwitz complex continued fractions in \cite{Ei2019} to all complex continued fraction algorithms under our consideration in \cite{Ei23+}.

The following lemma summarises some results from \cite{Ei22} and \cite{Ei23+} that we require. We assume throughout that $d=1,2,3,7,11$.
\begin{lemma}\label{lemma: bounded equivalence with Lebesgue measure}
Let $\lambda$ be the Lebesgue measure on $\mathbb{C}$. The map $G_d$ admits an ergodic invariant measure $\mu_d$ which is absolutely continuous with respect to the Lebesgue measure. Furthermore there exists a $C'>0$ such that
\begin{equation}\label{equation: bounded equivalence}
    \frac{1}{C'}\lambda(A)\leq \mu_d(A)\leq C'\lambda(A)
\end{equation}
for every Borel set $A$. In fact, there is a partition 
\[K_d = \bigcup_{k=1}^J V_k\]
into path-connected sets whose boundaries consist of a finite union of segments of circles and lines such that
\begin{equation}\label{equation: mu as marginal}
    d\mu_d = C\sum_{k=1}^Jg_k\mathbf{1}_{V_k} d\lambda(z),
\end{equation}
where $g_k$ is a real-analytic function on an open neighbourhood of $V_k$, $\mathbf{1}_{V_k}$ is the characteristic function on $V_k$ and $C$ is a normalizing constant which ensures $\mu_d$ is a probability measure.
\end{lemma}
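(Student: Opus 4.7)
The plan is to follow the transfer-operator construction carried out in \cite{Ei2019} for the Hurwitz case and extended in \cite{Ei23+} to the remaining Euclidean rings. First I would set up the combinatorial structure of $G$: on each cylinder $C_a := \{z \in K_d : a_1(z) = a\}$ indexed by an admissible digit $a \in \mathfrak{o}_d$, the map $G$ acts as the Möbius transformation $z \mapsto 1/z - a$, whose inverse branch $\phi_a(w) = 1/(w+a)$ is real-analytic with real Jacobian $|w+a|^{-4}$. Hence $G$ is a piecewise real-analytic Markov map with countably many branches, each contracting by a factor comparable to $|a|^{-2}$ in diameter; standard distortion estimates then give a bounded-distortion constant uniform in $a$.

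The core of the argument is to construct an invariant density of the form \eqref{equation: mu as marginal} for the Perron-Frobenius operator $\mathcal{L}f(w) = \sum_{a} |w+a|^{-4} f(\phi_a(w))$ associated with $G$. The strategy of \cite{Ei23+} is to track the forward orbit of $\partial K_d$ under $G$: since $G$ is piecewise Möbius and $\partial K_d$ is a finite union of line and circular segments, each iterate $G^n(\partial K_d)$ lies in a finite union of circular arcs and line segments. The main technical claim — and the place where the special arithmetic of $\mathfrak{o}_d$ for $d \in \{1,2,3,7,11\}$ enters — is that this orbit stabilises after finitely many iterations, yielding a finite family of curves; the partition $\{V_k\}_{k=1}^J$ is then defined as the connected components cut out by this family. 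One verifies that $\mathcal{L}$ preserves the cone of functions that are real-analytic on each $V_k$, and convergence of $\mathcal{L}^n \mathbf{1}$ — using bounded distortion and summability of $\sum_a |a|^{-4}$ on the lattice $\mathfrak{o}_d$ — produces the densities $g_k$ as real-analytic functions on neighbourhoods of the $V_k$. This combinatorial finiteness statement is the main obstacle; without it one only obtains an $L^1$ invariant density, not the piecewise-analytic structure required.

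Once the density is in hand, the equivalence \eqref{equation: bounded equivalence} follows because each $g_k$ extends real-analytically and positively to the closure of $V_k$ (positivity coming from termwise positivity of $\mathcal{L}\mathbf{1}$), so one may take $C'$ to be the normalising constant times $\max_k \sup_{V_k} g_k$, with the reciprocal lower bound coming from $\min_k \inf_{V_k} g_k > 0$. Ergodicity is then standard: uniform expansion, bounded distortion and the folklore theorem for piecewise expanding Markov maps with big images give uniqueness of the absolutely continuous invariant probability measure, which forces $\mu_d$ to be ergodic.
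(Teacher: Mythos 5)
The paper does not actually prove this lemma: it is introduced with ``The following lemma summarises some results from \cite{Ei22} and \cite{Ei23+} that we require,'' and the proof is deferred entirely to those references. So your reconstruction should be measured against what Ei, Nakada and Natsui actually do, and there it diverges in an important way.

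You correctly identify the combinatorial heart of the matter --- the finiteness of the partition $\{V_k\}_{k=1}^J$ --- but you misdescribe how the densities $g_k$ arise, and this is where the argument as you have sketched it has a real gap. The references do not obtain $g_k$ by iterating a transfer operator $\mathcal L$. They construct the natural extension $\hat G$ on a domain $X \subset \mathbb C^2$ that decomposes as a finite union of products $\bigcup_k V_k \times W_k$, and use that the measure $|z-w|^{-4}\,d\lambda(z)\,d\lambda(w)$ --- the $\PSLC$-invariant measure on the space of oriented geodesics of $\mathbf H^3$ --- restricts to a finite $\hat G$-invariant measure on $X$. Projecting to the first coordinate then gives the density in closed form,
\[
g_k(z) \;=\; \int_{W_k} \frac{d\lambda(w)}{|z-w|^{4}},
\]
which is manifestly real-analytic on a neighbourhood of $V_k$ simply because $W_k$ is bounded away from $V_k$ (indeed $W_k \subset \{|w|\ge 1\}$ while $\sup_{z\in K_d}|z|<1$). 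This is exactly why the paper labels \eqref{equation: mu as marginal} ``mu as marginal,'' and it also yields \eqref{equation: bounded equivalence} immediately: the kernel $|z-w|^{-4}$ is bounded above and below by positive constants on $K_d \times W_k$, so each $g_k$ is, and so is the piecewise density.

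Your transfer-operator route is not wrong in principle, but the step ``convergence of $\mathcal L^n\mathbf 1$ \ldots produces the densities $g_k$ as real-analytic functions'' is unsupported. Bounded distortion and summability of $\sum_{a\in\mathfrak o_d\setminus\{0\}}|a|^{-4}$ give you a Lasota--Yorke inequality and hence an $L^1$ (or BV-type) invariant density, but not real-analyticity of the limit. To get analyticity by this method you would need a spectral gap for $\mathcal L$ on a Banach space of holomorphic functions (nuclearity on a Hardy space, as in Mayer's treatment of the real Gauss map), and you would also need to verify the Markov property that each inverse branch $\phi_a$ maps each $V_k$ into a single $V_{k'}$ --- none of which you establish. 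The natural-extension argument of \cite{Ei23+} avoids all of this by writing the density down explicitly; the only hard input it requires is precisely the combinatorial finiteness of the decomposition of $X$, which you correctly flagged as the crux but attributed to the wrong construction (it is about stabilisation of the second-coordinate fibres of the natural extension, not of the forward images $G^n(\partial K_d)$). Ergodicity in the references is likewise read off from the natural extension rather than from a folklore theorem for expanding maps, though your route would also give it once the gap above is filled.
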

See Figure \ref{figure: Nakada Partition and Dual} for an illustration of this partition.

\begin{figure}
    \centering
    \includegraphics[width=0.5\textwidth]{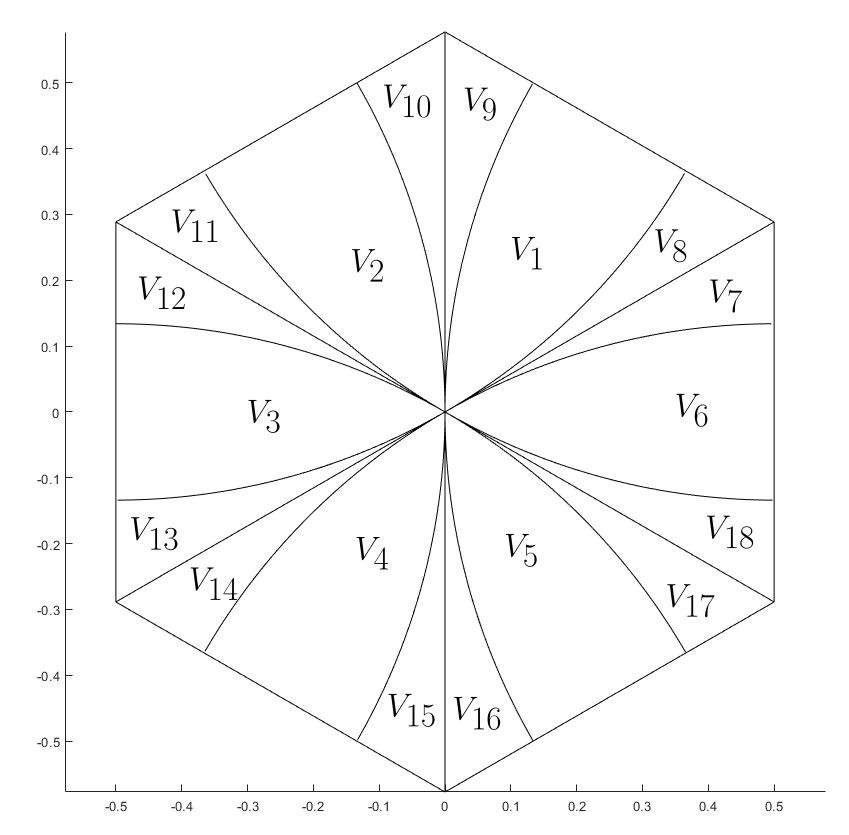}
    \caption{The partition $\bigcup_{k=1}^J V_k$ for $d=3$.}
    \label{figure: Nakada Partition and Dual}
\end{figure}
The following proof extends Lemma 2 in \cite{Kirsebom} to the nearest-integer $\mathfrak{o}_d$-continued fractions under consideration. 

\begin{lemma}\label{lemma: chance of large digits}
    Let $d= 1,2,3,7,11$. There is a constant $H>0$ such that 
    \[\mu_d\{z\in K_d: |a_1(z)|> t\} = \frac{H}{t^2} + O\left(\frac{1}{t^3}\right)\text{ as }t\to\infty, \]
    where $a_1(z)$ is the first digit in the nearest-integer $\mathfrak{o}_d$-continued fraction expansion of $z$.
\end{lemma}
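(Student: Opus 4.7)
The plan is to evaluate $\mu_d\{z \in K_d : |a_1(z)| > t\}$ by partitioning over the admissible values of $a_1(z)$, applying the change of variables $w = 1/z$, and finishing with a Gauss-circle style lattice point estimate. Since $0 \in K_d$, every $z \in K_d$ has $a_0(z) = 0$ and $z_1 = 1/z$, so $a_1(z) = \ch{1/z}_d$. I thus write
$$\mu_d\{|a_1(z)|>t\} = \sum_{\substack{a \in \mathfrak{o}_d \\ |a|>t}} \mu_d(E_a), \qquad E_a := \{z \in K_d : \ch{1/z}_d = a\}.$$
For $|a|$ large, $E_a$ is a neighbourhood of $0$ of diameter $O(1/|a|)$; it is automatically contained in $K_d$ and moreover sits inside the unique piece of the partition in Lemma \ref{lemma: bounded equivalence with Lebesgue measure} (say $V_1$) that contains a neighbourhood of $0$, on which the density of $\mu_d$ is the real-analytic function $Cg_1$.

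Next, the change of variable $w = 1/z$, whose real Jacobian is $|w|^{-4}$, gives
$$\mu_d(E_a) = C \int_{a + \overline{K_d}} g_1(1/w) \, |w|^{-4} \, d\lambda(w).$$
Taylor expansion at the origin yields $g_1(1/w) = g_1(0) + O(1/|a|)$ uniformly for $w \in a + \overline{K_d}$, and a straightforward geometric estimate gives $|w|^{-4} = |a|^{-4} + O(|a|^{-5})$ uniformly on the same set. Since $\lambda(a + \overline{K_d}) = \lambda(K_d)$, integration then produces
$$\mu_d(E_a) = \frac{Cg_1(0)\lambda(K_d)}{|a|^4} + O\!\left(\frac{1}{|a|^5}\right).$$

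Finally, I would sum over the lattice. Writing $N(r) = \#\{a \in \mathfrak{o}_d : |a| \leq r\} = \pi r^2/\lambda(K_d) + O(r)$ and integrating by parts in the Stieltjes integral $\int_t^\infty r^{-4}\, dN(r)$ gives
$$\sum_{\substack{a \in \mathfrak{o}_d \\ |a|>t}} \frac{1}{|a|^4} = \frac{\pi}{\lambda(K_d) t^2} + O\!\left(\frac{1}{t^3}\right),$$
with $\sum_{|a|>t} |a|^{-5} = O(1/t^3)$ obtained the same way. Combining everything yields the asymptotic $\mu_d\{|a_1|>t\} = \pi Cg_1(0)/t^2 + O(1/t^3)$, so $H = \pi Cg_1(0)$.

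The main technical burden is the error bookkeeping: I need (i) to confirm $g_1$ is real-analytic and bounded on a fixed neighbourhood of $0$ containing every $E_a$ beyond some threshold $|a|>T_0$, (ii) to absorb the finite number of $|a|\in(t,T_0]$ into the $O(1/t^3)$ remainder, and (iii) to sharpen the Gauss-circle step enough to give the stated $O(1/t^3)$ rate rather than something weaker. None of these should be serious, since integration by parts against $N(r) = \pi r^2/\lambda(K_d) + O(r)$ immediately produces a remainder of that order.
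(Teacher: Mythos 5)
Your key structural claim---that a unique piece $V_1$ of the partition in Lemma \ref{lemma: bounded equivalence with Lebesgue measure} contains a neighbourhood of the origin---is false, and this is not a cosmetic issue. The paper's own proof of this lemma explicitly notes that for $d=2$ the pieces $V_1,\ldots,V_8$ all have $0$ as a vertex (Figure \ref{figure: Nakada Partition and Dual}); near $0$ the partition splits $K_d$ into a fan of wedges meeting at the origin, and similar behaviour occurs for the other $d$. The density $C\sum_k g_k\mathbf{1}_{V_k}$ of $\mu_d$ is therefore only \emph{piecewise} real-analytic near $0$: on adjacent wedges it equals different functions $g_k$, which need not agree at the origin. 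In your estimate $\mu_d(E_a)=Cg_1(0)\lambda(K_d)/|a|^4+O(|a|^{-5})$ the value $g_1(0)$ should really be $g_{k(a)}(0)$, where $k(a)$ records which wedge contains $1/a$ (and when $1/a$ lies near a wedge boundary, $E_a$ straddles two pieces). The lattice sum then produces a direction-weighted average of the various $g_k(0)$ rather than $\pi Cg_1(0)$, so the constant $H$ you write down need not be correct. (Two smaller slips: $E_a$ is a small region near the point $1/a$, not a neighbourhood of $0$; and since $w\mapsto 1/w$ has derivative $\asymp |a|^{-2}$ on $a+\overline{K_d}$, $E_a$ has diameter $O(|a|^{-2})$, not $O(|a|^{-1})$.)

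Repairing your route would require partitioning the lattice points $a$ by the wedge containing $1/a$, showing the boundary-straddling $a$ contribute only $O(t^{-3})$, and redoing the Gauss-circle count sector by sector; none of that is addressed. The paper avoids lattice summation entirely: it sandwiches $\{z:|a_1(z)|>t\}$, after inversion, between $D(1/(t+1))$ and $D(1/(t-1))$, uses the bounded density to reduce to the asymptotics of $\mu_d(D(1/t))$, and then treats each piece separately via $f_k(x)=\int_{V_k\cap D(x)}g_k\,d\lambda$ written in polar coordinates, where the wedge structure at $0$ is handled automatically by the smoothly varying angular limits $\theta_1(r),\theta_2(r)$.
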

\begin{proof}
Let $\iota$ be the inversion map $\widehat{\mathbb{C}}\to\widehat{\mathbb{C}}: z\mapsto 1/z$ on the Riemann sphere $\mathbb{C}\cup\{\infty\}$. Define the set $A_t$ for $t>0$ by setting
\[A_t:= \iota(\{z\in K_d: |a_1(z)|> t\}) =\{w\in \iota(K_d): |\ch{w}_d|> t\}.\]
Let $D(r)$ be the disc of radius $r$ centred around the origin and denote its complement by $D(r)^C$. It follows from the definitions of $\ch{\cdot}$ and $a_1(\cdot)$ that 
\begin{equation*}
    D(t+1)^C \subset A_t\subset D(t-1)^C.
\end{equation*}
For $t>1$. By applying $\iota$ to the formula above and taking the measure $\mu_d$ we obtain that 
\[\abs{\mu_d\{z\in K_d: |a_1(z)|> t\}-\mu_d\left(D(1/t)\right)}\leq C'\lambda\left(D\left(\frac{1}{t-1}\right)\backslash D\left(\frac{1}{t+1}\right)\right),\]
where $C'$ is the constant in \eqref{equation: bounded equivalence}. The Lebesgue measure of the annulus is of order $O(1/t^3)$, so we obtain
\[\mu_d\{z\in K_d: |a_1(z)|> t\} = \mu_d\left(D(1/t)\right) + O\left(\frac{1}{t^3}\right).\]
It thus suffices to prove that for some $H$,
\[\mu_d\left(D(1/t)\right) = \frac{H}{t^2} + O\left(\frac{1}{t^3}\right)\text{ as }t\to\infty. \]
By \eqref{equation: mu as marginal}, it is enough to show that the functions 
\[f_k(x) =  \int\mathbf{1}_{V_k\cap D(x)}(z)g_k(z)d\lambda(z) \]
satisfy $f_k\left(\frac{1}{t}\right) =\frac{W_k}{t^2} + O\left(\frac{1}{t^3}\right)$ for some constant $W_k$. Since $f_k(x) = O(1/t^2) $, it suffices by Taylor's theorem to show that $f_k$ is three times differentiable in $[0,\epsilon)$ \footnote{Here the differentiability of a function at $0$ means existence of the right derivative.} for some $\epsilon>0$. Recall that the boundary of $V_k$ consists of a finite union of circle segments and line segments. For sufficiently small $x$, the disk $D(x)$ intersects two segments if $0$ is a vertex of $V_k$. For example, if $d=3$, this is the case for all the partition elements in Figure \ref{figure: Nakada Partition and Dual}. Else $D(x)$ intersects at most one line segment. In any case, these segments admit a smooth parametrisation of the form $r\mapsto re^{i\theta(r)}$ for small $r$. Hence we can write the above integral in polar coordinates and obtain 
\[f_k(x) = \int_0^x\int_{\theta_1(r)}^{\theta_2(r)}g_k(re^{i\theta}) rd\theta dr,\]
where $\theta_1$ and $\theta_2$ vary smoothly with $r$. Since $g_k$ is real analytic on an open neighbourhood of $V_k$, we see by repeated applications of the Leibniz integral rule that $f_k$ is infinitely differentiable in $[0,\epsilon)$ for some $\epsilon>0$ small enough.
\end{proof}
We now state the mixing property in \cite{Ei23+} that we require.
\begin{proposition}[Ei, Nakada, Natsui]\label{proposition: exponential decay}
    Let $\mu_d$ be as in Lemma \ref{lemma: bounded equivalence with Lebesgue measure}. There exist constants $\alpha>0$ and $0<\rho<1$ depending on $d$ such that for all $k,m\in\mathbb{N},  b_1,b_2,\ldots,b_k\in \mathfrak{o}_d$ and $ c_1,\ldots,c_m\in \mathfrak{o}_d$ we have
    \begin{equation}\label{equation: mixing map}
    \begin{split}
        &|\mu_d(\langle b_1,\ldots,b_k \rangle \cap G_d^{-(n+k)} \langle c_1,\ldots,c_m\rangle) \\
        &-\mu_d(\langle b_1,\ldots,b_k \rangle)\mu_d( \langle c_1,\ldots,c_m\rangle)|\\
        &\leq \alpha \rho^n \mu_d(\langle b_1,\ldots,b_k \rangle)\mu_d (\langle c_1,\ldots,c_m\rangle).
    \end{split}
    \end{equation}
\end{proposition}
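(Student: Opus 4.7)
The plan is to deduce this from a spectral gap for the Perron--Frobenius operator $\mathcal{L}$ associated with $G$ and Lebesgue measure, characterised by $\int \mathcal{L}f \cdot g \, d\lambda = \int f \cdot (g\circ G) \, d\lambda$. Writing $h = d\mu_d/d\lambda$, Lemma \ref{lemma: bounded equivalence with Lebesgue measure} gives that $h$ is bounded above and below and invariance of $\mu_d$ means $\mathcal{L}h = h$. The target would be an estimate of the form
\[ \left\| \mathcal{L}^n f - h \int f \, d\lambda \right\|_{L^\infty} \leq C \rho^n \| f \|_{\mathrm{var}} \]
on a Banach space $\mathcal{B}$ carrying a variation seminorm $\|\cdot\|_{\mathrm{var}}$ (for instance bounded variation or a suitable H\"older space), with the exponent $\rho\in(0,1)$ ultimately playing the role of the $\rho$ in the statement.

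To obtain such a spectral gap, I would follow the standard route: first establish a Lasota--Yorke inequality of the form $\|\mathcal{L}f\|_{\mathrm{var}} \leq \theta \|f\|_{\mathrm{var}} + C\|f\|_{L^1}$ with $\theta<1$, then invoke Hennion's theorem (Ionescu-Tulcea--Marinescu) for quasi-compactness of $\mathcal{L}$ on $\mathcal{B}$, and finally appeal to the ergodicity in Lemma \ref{lemma: bounded equivalence with Lebesgue measure} for simplicity of the leading eigenvalue. The two geometric ingredients are uniform expansion of $G$, which on a cylinder $\langle a_1,\ldots,a_k\rangle$ amounts to $|(G^k)'(z)|$ growing like $|q_k|^2\to\infty$ via Proposition \ref{proposition: growth of denominator}, and bounded distortion of the inverse branches of $G^k$ on cylinders, which comes from the M\"obius structure via \eqref{equation: convergents}--\eqref{equation: determinant of matrix of convergents}.

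With the spectral gap in hand, the mixing estimate follows by rewriting, with $A = \langle b_1,\ldots,b_k\rangle$ and $C = \langle c_1,\ldots,c_m\rangle$,
\[ \mu_d\bigl( A \cap G^{-(n+k)} C \bigr) = \int \mathbf{1}_C \cdot \mathcal{L}^{n+k}(\mathbf{1}_A h)\, d\lambda. \]
By change of variables through the inverse branch $\psi$ of $G^k$ on $A$, the function $\mathcal{L}^k(\mathbf{1}_A h)$ equals $(h\circ\psi)\cdot|J\psi|$ on $K_d$; bounded distortion of $\psi$ forces $\|\mathcal{L}^k(\mathbf{1}_A h)\|_{\mathrm{var}} \leq C\mu_d(A)$ with a constant \emph{independent of} $k$. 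Applying the spectral gap to this function for $n$ more iterations yields $\mathcal{L}^{n+k}(\mathbf{1}_A h) = \mu_d(A)\,h + E_n$ with $\|E_n\|_{L^\infty}\leq C\rho^n \mu_d(A)$, and integrating against $\mathbf{1}_C$ together with $\lambda(C) \leq C'\mu_d(C)$ from Lemma \ref{lemma: bounded equivalence with Lebesgue measure} produces exactly \eqref{equation: mixing map}. The main obstacle is really the Lasota--Yorke step: $G$ has countably many inverse branches indexed by $\mathfrak{o}_d$ which accumulate near $\partial K_d$, so one needs fine control of these branches --- essentially the piecewise-analytic structure of $h$ on the partition $\bigcup V_k$ from Lemma \ref{lemma: bounded equivalence with Lebesgue measure} established in \cite{Ei23+}, together with the observation that this partition is Markov for the natural extension of $G$ rather than for $G$ itself, so one must first set up the extension before the quasi-compactness argument can be run.
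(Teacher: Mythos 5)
The paper does not actually prove this statement: it is quoted, under the names Ei, Nakada, Natsui, directly from \cite{Ei23+}, and the text immediately preceding it says only ``We now state the mixing property in \cite{Ei23+} that we require.'' There is therefore no ``paper's own proof'' to compare against; the intended proof is the external one. Your outline is nevertheless worth assessing on its own terms, and it does capture the standard template: a Lasota--Yorke inequality for the transfer operator, quasi-compactness via Hennion or Ionescu-Tulcea--Marinescu, simplicity of the eigenvalue $1$ from ergodicity, and a bounded-distortion estimate to make the constant in $\|\mathcal{L}^k(\mathbf{1}_A h)\|_{\mathrm{var}}\leq C\mu_d(A)$ independent of $k$. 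You also correctly flag the main structural obstruction, namely that the partition $\bigcup V_k$ of Lemma \ref{lemma: bounded equivalence with Lebesgue measure} is Markov for the natural extension $\hat G$, not for $G$ itself, and that the inverse branches accumulate near $\partial K_d$; building the natural extension and a suitable Banach space is precisely the content of \cite{Ei23+}.

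The one concrete gap in the sketch is the target estimate $\|\mathcal{L}^n f - h\int f\,d\lambda\|_{L^\infty}\leq C\rho^n\|f\|_{\mathrm{var}}$, with the sup norm on the left. Here the phase space is two-dimensional, and in dimension two a bounded-variation seminorm does \emph{not} control $L^\infty$, so quasi-compactness on a BV-type space gives decay only in the BV (or $L^1$) norm. Without pointwise control of $E_n=\mathcal{L}^{n+k}(\mathbf{1}_A h)-\mu_d(A)h$, integrating against $\mathbf{1}_C$ gives only $|\int_C E_n\,d\lambda|\leq\|E_n\|_{L^1}\leq C\rho^n\mu_d(A)$, which is a $\phi$-mixing bound and is strictly weaker than the $\psi$-mixing bound \eqref{equation: mixing map}, where the crucial extra factor $\mu_d(C)$ appears on the right. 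Repairing this requires working in a space that both supports the Lasota--Yorke inequality for this countable-branch, two-dimensional, non-Markov map and embeds into $L^\infty$ (for instance a quasi-H\"older space in the style of Saussol, or the bespoke construction in \cite{Ei23+}); that is exactly the hard technical step, and as written your sketch silently assumes it.
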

We define a piece of notation which will give a simple interpretation to this proposition. If $Q(a_1(z),a_2(z),\ldots)$ is a set of conditions concerning the digits of the continued fraction expansion, we write 
\[\mathbb{P}(Q(a_1(z),a_2(z),\ldots)) = \mu_d\{z\in K_d:Q(a_1(z),a_2(z),\ldots) \text{ holds }\}.\]
We may then rewrite $\mu_d(\langle b_1,b_2,\ldots,b_k \rangle \cap T^{-(n+k)} \langle c_1,\ldots,c_m\rangle) $ in \eqref{equation: mixing map} as 
\[\mathbb{P}(a_1(z)=b_1,\ldots ,a_k(z) = b_k,\quad a_{k+n+1}(z) = c_1,\ldots ,a_{k+n+m+1}(z)  = c_m).\]
Similarly, the quantities $\mu_d(\langle b_1,\ldots,b_k \rangle)$ and $\mu_d( \langle c_1,\ldots,c_m\rangle)$ can be thought of as $\mathbb{P}(a_1(z)=b_1,\ldots ,a_k(z) = b_k)$ and $\mathbb{P}(a_{1}(z) = c_1,\ldots ,a_{m}(z)  = c_m)$ respectively.  Equation \eqref{equation: mixing map} therefore tells us that finite sequences of digits occur almost independently with an error term that decreases exponentially with respect to the gap between the sequences. If we take $\xi_l=a_l$ in Definition \ref{definition: psi-mixing}, we note that the $\sigma$-algebra $\mathcal{F}_{l}^m$ is generated by the level sets $\{z:a_n(z)=b\}$ for all $b\in\mathfrak{o}_d, b\leq n\leq m$. Hence, Proposition \ref{proposition: exponential decay} is equivalent to '$\psi$-mixing' with exponentially decaying $\alpha_n$. For $\psi$-mixing sequences, a Poisson law follows from \cite{Iosifescu}. 
\begin{proposition}[Iosifescu]\label{proposition: LLR}
    Let $\xi_1,\xi_2,\ldots$ be a sequence of identically distributed $\psi$-mixing random variables. Moreover, assume that there is some sequence $(u_n)_n$ with $u_n\to\infty$ as $n\to\infty$ that satisfies \begin{equation}
        \limsup_{n\to\infty} n\mathbb{P}(\xi_1>u_n) = \tau.
    \end{equation}
    for some $\tau\in\mathbb{R}$. Then the $l$-th largest value $M^{(l)}_n$ in $\{\xi_1,\xi_2,\ldots ,\xi_n\}$ satisfies $\mathbb{P}( M^{(l)}_n \leq u_n)\to  e^{-\tau}\sum_{j=0}^{l-1}\tau^j/j!$ as $n\to\infty$.
\end{proposition}

\begin{proof}[Proof of Theorem \ref{theorem: Gumbel Law for Bianchi Groups} for $\eta = \mu_d$]
    We show that the theorem holds for the measure $\mu_d$ when $C_d=\sqrt{H}$, we can apply Proposition \ref{proposition: LLR}, which states that it suffices to show that 
    \[n\mu_d\left\{z\in K_d: \abs{a_n(z)}> u\sqrt{nH } \right\}\to \frac{1}{u^2}\text{ as } n\to\infty.\]
    This follows immediately from Lemma \ref{lemma: chance of large digits}. This proves the theorem for $\eta = \mu_d$. Extending this to any measure equivalent to the Lebesgue measure is a straightforward application of a result due to Eagleson, which we state below.
\end{proof}

Let $(R_n)_n$ be a sequence of random variables on some probability space $(X,\mathcal{A},P)$ and let $R$ be another random variable, not necessarily defined on $X$. We write $R_n\xRightarrow{P} R $ if $R_n$ converges to $R$ in distribution, i.e. $P\circ R_n^{-1}$ converges weakly to the law of $R$ as $n\to\infty$. We use the following theorem due to Eagleson, see \cite{Eagleson}, \cite{Zweimuller}. 
\begin{proposition}[Eagleson]\label{proposition: Eagleson}
    Let $(R_n)_n$ be a sequence of random variables on some probability space $(X,\mathcal{A},P)$. Let $R$ be another random variable and assume $R_n\xRightarrow{P} R $. Assume there exists a $\sigma$-finite measure $\mu$ such that $P$ is absolutely continuous with respect to $\mu$, and an ergodic nonsingular map $M$ on $(X,\mathcal{A},\mu)$ for which
    \[\abs{R_n\circ M-R_n}\xRightarrow{\mu} 0.\]
    Then for all measures $\eta$ which are absolutely continuous with respect to $\mu$, we have that 
    \[R_n \xRightarrow{\eta} R.\]
\end{proposition}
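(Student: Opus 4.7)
The plan is to exploit the asymptotic $G$-invariance of $(R_n)$ together with the ergodicity of $G$ to show that the law of $R_n$ under $\eta$ becomes asymptotically indistinguishable from its law under $P$, at which point the hypothesis $R_n \xRightarrow{P} R$ directly transfers. By the Portmanteau theorem, it suffices to verify that $\int \phi(R_n)\,d\eta \to \mathbb{E}[\phi(R)]$ for every bounded Lipschitz $\phi$. Without loss of generality $\eta$ is a probability measure, and writing $h := d\eta/d\mu$ and $f := dP/d\mu$, both densities lie in $L^1(\mu)$ with $\int h\,d\mu = \int f\,d\mu = 1$, so the problem reduces to showing $\int \phi(R_n)(h-f)\,d\mu \to 0$.

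Iterating the hypothesis via the triangle inequality gives $|R_n \circ G^k - R_n| \xRightarrow{\mu} 0$ for each fixed $k\in\mathbb{N}$, and bounded Lipschitz continuity of $\phi$ together with dominated convergence (against $h-f \in L^1(\mu)$) yields
\[\int \phi(R_n)(h-f)\,d\mu - \int \phi(R_n\circ G^k)(h-f)\,d\mu \xrightarrow{n\to\infty} 0 \quad \text{for each fixed } k.\]
Next, I would introduce the transfer operator $\hat{G}\colon L^1(\mu)\to L^1(\mu)$ characterized by the duality $\int (u\circ G)\,v\,d\mu = \int u\,\hat{G}v\,d\mu$, which converts $\int \phi(R_n\circ G^k)(h-f)\,d\mu$ into $\int \phi(R_n)\,\hat{G}^k(h-f)\,d\mu$. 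Averaging the resulting identity over $k = 0,1,\ldots,N-1$ reduces the problem to controlling $\int \phi(R_n)\,S_N\,d\mu$ with $S_N := \frac{1}{N}\sum_{k=0}^{N-1}\hat{G}^k(h-f)$.

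The final ingredient is the Hopf ratio ergodic theorem, which in the ergodic nonsingular setting delivers
\[\frac{\sum_{k=0}^{N-1}\hat{G}^k(h-f)}{\sum_{k=0}^{N-1}\hat{G}^k(h+f)} \xrightarrow{N\to\infty} \frac{\int(h-f)\,d\mu}{\int(h+f)\,d\mu} = 0 \quad \mu\text{-a.e.}\]
Combined with the $L^1$-contractivity bound $\bigl\|\frac{1}{N}\sum_{k=0}^{N-1}\hat{G}^k(h+f)\bigr\|_{L^1(\mu)}\leq 2$ and a uniform-integrability argument, this yields $\|S_N\|_{L^1(\mu)}\to 0$ as $N\to\infty$. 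Since $\phi$ is bounded, $\bigl|\int \phi(R_n)\,S_N\,d\mu\bigr|\leq \|\phi\|_\infty\,\|S_N\|_{L^1(\mu)}$ tends to $0$ uniformly in $n$, and choosing $N\to\infty$ after $n\to\infty$ completes the proof. The main obstacle is the $\sigma$-finite (possibly infinite) nature of $\mu$, which precludes a direct appeal to Birkhoff's theorem and forces reliance on Hopf's ratio version together with the uniform-integrability step; in the paper's application $\mu = \mu_d$ is in fact a finite $G$-invariant probability measure (Lemma \ref{lemma: bounded equivalence with Lebesgue measure}), so the mean ergodic theorem in $L^1$ applies directly to $\hat{G}$ and the argument simplifies considerably.
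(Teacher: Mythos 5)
The paper does not prove this proposition at all: it quotes it as Eagleson's theorem and refers the reader to \cite{Eagleson} and \cite{Zweimuller}, so there is no in-paper argument to compare against. Your sketch is a reasonable reconstruction of the main ideas in Zweimüller's proof of this result, namely reducing the claim to showing that the Ces\`aro averages $S_N=\frac{1}{N}\sum_{k=0}^{N-1}\hat G^k(h-f)$ of the dual (transfer) operator vanish in $L^1(\mu)$, and then invoking the Hopf/Chacon--Ornstein ratio ergodic theorem.

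Two steps deserve more care than the sketch gives them. First, the claim $\abs{R_n\circ G^k-R_n}\xRightarrow{\mu}0$ for each fixed $k$ does not follow from the hypothesis by the triangle inequality alone: writing $R_n\circ G^k-R_n=\sum_{j=0}^{k-1}(R_n\circ G-R_n)\circ G^j$, you need that pulling back a set of small $\mu$-measure by $G^j$ keeps its measure small. For a merely nonsingular map on a $\sigma$-finite space this is not automatic; one either assumes $\mu\circ G^{-j}$ is uniformly absolutely continuous with respect to $\mu$, or one runs the estimate against a finite measure equivalent to $\mu$ (in effect, against $\eta$ or $P$), which is the route the cited sources take. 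Second, the passage from $a_N/b_N\to 0$ $\mu$-a.e.\ together with $\sup_N\norm{b_N}_{L^1}\le 2$ to $\norm{a_N}_{L^1}\to 0$ genuinely needs uniform integrability of $(b_N)_N$, and in the $\sigma$-finite setting this is a nontrivial claim that must be established, not merely cited; this is precisely where Zweimüller's proof invests effort. You correctly note that in the paper's application $\mu=\mu_d$ is a finite $G$-invariant probability, so both issues collapse: $\mu\circ G^{-1}=\mu$, and the $L^1$ mean ergodic theorem for $\hat G$ gives $\norm{S_N}_{L^1}\to 0$ directly.
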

\begin{proof}[Proof of Theorem \ref{theorem: Gumbel Law for Bianchi Groups} for $\eta$ equivalent to the Lebesgue measure.]
Let $R$ be some $(0,\infty)$-valued random variable with cumulative distribution function $F_R(u) = e^{-1/u^2}$. Theorem \ref{theorem: Gumbel Law for Bianchi Groups} for the measure $\mu_d$ therefore states that
\[R_N:=\frac{\max_{n\leq N}\abs{a_n(z)}}{C_d\sqrt{N}}\xRightarrow{\mu_d} R \text{ as }N\to\infty.\]
Since $a_n(G_d(z))= a_{n+1}(z)$ for all $n\in\mathbb{N}$, all $u\geq0$ satisfy
\[ \mu_d\left\{\beta:\abs{R_n\circ G_d-R_n}(z)\leq u\right\} \leq \mu_d\left\{\beta:\frac{\max\{\abs{a_1(z)},\abs{a_{N+1}(z)}\}}{C_d\sqrt{N}}\leq u\right\} .\]
By Proposition \ref{proposition: exponential decay}, this tends to zero as $N\to \infty$ if $u>0$. This proves $\abs{R_n\circ G_d-R_n}\xRightarrow{\mu_d} 0$, which means that we can apply Proposition \ref{proposition: Eagleson} and obtain that 
\begin{equation}\label{equation: Eagleson for Galambos Result}
    R_N\xRightarrow{\eta} R,
\end{equation}
for any $\eta$ which is absolutely continuous with respect to $\mu_d$, or equivalently, the Lebesgue measure.
\end{proof}

\section{Bianchi orbifolds}\label{section: Bianchi Orbifolds}
This section is dedicated to elucidating the geometry of the five Bianchi orbifolds under consideration. Let us start with a short description of the construction of the three dimensional hyperbolic space. We refer to \cite{GroupsOnH3} for more information. Consider the subset of the quaternions defined by 
\[\mathbf{H}^3 = \{x+yi+rj:x,y\in\mathbb{R},r>0\}.\]
With the Riemannian metric
\begin{equation}\label{equation: Riemannian metric}
    ds^2 = \frac{dx^2+dy^2+dr^2}{r^2},
\end{equation}
it is the unique simply connected three dimensional Riemannian manifold of constant negative sectional curvature $-1$. Let $I$ be the $2\times2$ identity matrix. For $w\in \mathbf{H}^3$, there is a unique $z\in\mathbb{C}$ and $r>0$ such that $w=z+rj$. In these coordinates, we can write the action of $\PSLC=\SLC/\{\pm I\}$ on $\mathbf{H}^3$ as
\begin{equation}\label{equation: Mobius transformation in coordinates}
\begin{split}
        \begin{pmatrix} a&b \\ c&d \end{pmatrix}\cdot (z+rj)&= (aw+b)(cw+d)^{-1}\\
        &=\frac{(az+b)(\overline{c}\overline{z}+\overline{d})+a\overline{c}r^2}{|cz+d|^2+|c|^2r^2}+\frac{rj}{|cz+d|^2+|c|^2r^2},
\end{split}
\end{equation}
where the right hand side of the first equality should be interpreted as an expression in the algebra of quaternions. The boundary $\partial \mathbf{H}^3$ of $\mathbf{H}^3$ is the Riemann sphere $\widehat{\mathbb{C}} =\mathbb{C}\cup\{\infty\} $. The action \eqref{equation: Mobius transformation in coordinates} extends to the familiar action on $\partial \mathbf{H}^3$ by Möbius transformations by setting $r=0$ in \eqref{equation: Mobius transformation in coordinates}.

The following proposition lists a complete set of generators for the Bianchi groups under consideration. This result can be found in \cite{Cohn68}.
\begin{proposition}\label{proposition: complete set of generators}
    Let $\omega_d = \frac{(-1+\sqrt{-d})}{2}$ if $d= 3 \mod 4$, and let $\omega_d = \sqrt{-d}$ otherwise. Let $\overline{\omega_d}$ be the complex conjugate of $\omega_d$. Define
    \[S=\begin{pmatrix}
            0 & 1\\
            -1& 0
        \end{pmatrix}\text{ and } T^q =\begin{pmatrix}
            1 & q\\
            0 & 1
        \end{pmatrix} \]
    for $q\in \mathfrak{o}_d$.  If $d= 2,7$, or $11$, then 
    \begin{equation}\label{equation: generators for Bianchi groups}
    S,T^1\text{ and }T^{\omega_d}
    \end{equation}
    generate $\PSLo$. If $d=1,3$, then we must add 
    \[\begin{pmatrix}
            \omega_d & 0\\
            0 & \overline{\omega_d} 
    \end{pmatrix}\]
    to \eqref{equation: generators for Bianchi groups} to form a complete set of generators. 
\end{proposition}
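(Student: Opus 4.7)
The plan is to prove the proposition by running a Euclidean-algorithm-style reduction on matrices in $\PSLo$, exploiting the fact that $\mathfrak{o}_d$ is a Euclidean ring with respect to the norm for exactly the five values $d = 1, 2, 3, 7, 11$. Given any $M = \begin{pmatrix} a & b \\ c & d \end{pmatrix} \in \PSLo$, the goal is to produce a word in the claimed generators and their inverses whose left-multiplication with $M$ reduces it to the identity in $\PSLo$.

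First I would carry out the core reduction step. Suppose $c \neq 0$. Since $\mathfrak{o}_d$ is Euclidean for these $d$, there exists $q \in \mathfrak{o}_d$ with $|a - qc| < |c|$; concretely, we may take $q = \ch{a/c}_d$ as in Definition \ref{definition:Bianchi Continued Fractions}. Left-multiplication by $T^{-q}$ replaces the top row $(a,b)$ by $(a-qc,\,b-qd)$, and then left-multiplication by $S$ swaps the rows up to sign, yielding a new matrix whose lower-left entry $-(a-qc)$ has strictly smaller norm than $c$. Since $|c|^2 \in \mathbb{Z}_{\geq 0}$, iterating this must terminate after finitely many steps with a matrix whose lower-left entry is $0$. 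At that point we have reduced $M$, modulo left-multiplication by generators, to an upper-triangular matrix $\begin{pmatrix} u & v \\ 0 & u^{-1} \end{pmatrix}$ with $u \in \mathfrak{o}_d^{\times}$ a unit and $v \in \mathfrak{o}_d$.

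Next I would factor this upper-triangular matrix as $\operatorname{diag}(u, u^{-1}) \cdot T^{u^{-1}v}$. Since $\mathfrak{o}_d = \mathbb{Z} + \mathbb{Z}\omega$, the translation $T^{u^{-1}v}$ is a product of integer powers of $T^1$ and $T^{\omega}$. It then remains to account for $\operatorname{diag}(u, u^{-1})$. For $d = 2, 7, 11$ the unit group is $\{\pm 1\}$, so $\operatorname{diag}(u, u^{-1}) = \pm I$ is trivial in $\PSLo$, and the three matrices $S, T^1, T^{\omega}$ suffice. For $d = 1$ the unit group is $\{\pm 1, \pm i\}$ and for $d = 3$ it is the group of sixth roots of unity; in both cases $\omega$ generates the unit group modulo $\pm 1$, so $\operatorname{diag}(\omega, \overline{\omega})$ and its powers (together with $-I$, which equals $S^2$ in $\SLC$) exhaust the diagonal unit matrices in $\PSLo$. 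This explains exactly why the extra generator is required for $d = 1, 3$ and not for $d = 2, 7, 11$.

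The main obstacle is the termination of the reduction step, which rests squarely on the Euclidean property of $\mathfrak{o}_d$: for any $a, c \in \mathfrak{o}_d$ with $c \neq 0$, the nearest element $q$ of $\mathfrak{o}_d$ to $a/c$ must satisfy $|a/c - q| < 1$. Geometrically, this says that the Dirichlet fundamental domain $K_d$ is strictly contained in the open unit disc, which is precisely the condition singled out in Section \ref{section: Complex Continued Fractions} that distinguishes $d = 1, 2, 3, 7, 11$ from the other imaginary quadratic $d$ and that underlies the convergence of the nearest-integer $\mathfrak{o}_d$-continued fraction algorithm via Proposition \ref{proposition: growth of denominator}. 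For any other imaginary quadratic $d$, this reduction step would not in general terminate and a different, larger generating set is required.
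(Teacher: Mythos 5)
The paper does not give its own proof of this proposition; it simply cites Cohn~\cite{Cohn68}. Your Euclidean-algorithm reduction is correct and is in fact the standard argument (and essentially Cohn's): left-multiplying by $T^{-q}$ with $q = \ch{a/c}_d$ and then by $S$ strictly decreases the integer $|c|^2$, which forces $c = 0$ after finitely many steps; the residual matrix $\begin{pmatrix} u & v \\ 0 & u^{-1} \end{pmatrix}$ then factors as $\operatorname{diag}(u,u^{-1}) T^{u^{-1}v}$, with $T^{u^{-1}v}$ a word in $T^1, T^\omega$ since $\mathfrak{o}_d = \mathbb{Z}+\mathbb{Z}\omega$, and the diagonal factor is trivial in $\PSLo$ precisely when $\mathfrak{o}_d^\times = \{\pm1\}$, i.e.\ $d=2,7,11$. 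Your key geometric input is also right and worth making explicit: the circumradius of $\overline{K_d}$ equals $\sqrt{(1+d)}/2$ for $d=1,2$ and $(d+1)/(4\sqrt{d})$ for $d\equiv 3\pmod 4$, and both are $<1$ exactly for $d\in\{1,2,3,7,11\}$, so $|a/c - \ch{a/c}_d| < 1$ is always available; and for $d=1,3$ you correctly observe that $\omega$ generates $\mathfrak{o}_d^\times$ modulo $\{\pm1\} = \{S^2, I\}$, so the single extra diagonal generator $\operatorname{diag}(\omega,\overline\omega)$ suffices.
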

Geometrically, the map $S$ is an inversion with respect to the unit hemisphere $\{x+yi+rj\in\mathbf{H}^3:x^2+y^2+r^2=1\}$ followed by a reflection across the $yr$-plane. The map $T^q$ is the translation $z+rj\mapsto z+q+rj$ parallel to the complex plane. 

The five Bianchi groups we consider have a fundamental domain that is simple to describe. 
\begin{proposition}\label{proposition: Fundamental Domain for Bianchi Groups}
    If $d=2,7$ or $11$, a fundamental domain for $\PSLo$ is given by 
    \[\mathcal{F}_d = \{z+rj\in\mathbf{H}^3: z\in K_d, \abs{z}^2+r^2>1\}.\]
    For $d=1,3$, the above domain must be modified to account for the extra generator in Proposition \ref{proposition: complete set of generators}. Define 
    \[K_1' = \left\{x+iy:x\in\mathbb{C}: 0<x<\frac{1}{2}, |y|< \frac{1}{2} \right\} \]
    and
    \[K_3' = \left\{z\in K_3: \arg{z}\in \left[\frac{\pi}{6},\frac{5\pi}{6}\right] \right\}.\]
    The fundamental domain is given by 
    \[\mathcal{F}_d = \{z+rj\in\mathbf{H}^3: z\in K_d', \abs{z}^2+r^2>1\}\]
    for $d=1,3$.
\end{proposition}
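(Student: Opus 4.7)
The plan is to verify the two defining properties of a fundamental domain: every $\PSLo$-orbit meets $\overline{\mathcal F_d}$, and no two distinct interior points of $\mathcal F_d$ are equivalent. The main tool throughout is the height formula read off the $j$-component of \eqref{equation: Mobius transformation in coordinates}: for $g=\left(\begin{smallmatrix}a&b\\c&d\end{smallmatrix}\right)\in\SLC$ and $w=z+rj$,
\[
\mathrm{ht}(g\cdot w)=\frac{r}{|cz+d|^2+|c|^2 r^2}.
\]

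For existence, fix $w\in\mathbf H^3$. Since $\mathfrak o_d$ is a lattice in $\mathbb C$, the set of values $|cz+d|^2+|c|^2 r^2$, as $(c,d)$ ranges over bottom rows of matrices in $\mathrm{SL}_2(\mathfrak o_d)$, is discrete and bounded below by a positive constant, so there exists $g_0\in\PSLo$ maximising the height on the orbit. Since translations $T^q$ preserve height, I would post-compose with a suitable $T^q$ to place the $\mathbb C$-part of $g_0\cdot w$ in $\overline{K_d}$, using that $K_d$ is the Dirichlet fundamental domain for $\mathfrak o_d$-translations. Writing the result as $z_1+r_1 j$, if $|z_1|^2+r_1^2<1$ then $S\cdot(z_1+r_1 j)$ would have strictly greater height, contradicting maximality; hence $z_1+r_1 j\in\overline{\mathcal F_d}$. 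For $d=1,3$, the additional generator of Proposition \ref{proposition: complete set of generators} acts on $\mathbf H^3$ as $z+rj\mapsto\omega^2 z+rj$, i.e.\ a Euclidean rotation of the $\mathbb C$-factor about $0$ by $\pi$ (for $d=1$) or $2\pi/3$ (for $d=3$); this generates a finite cyclic subgroup of height-preserving maps for which $K_d'$ is a fundamental domain inside $K_d$, so a final application puts the $\mathbb C$-part into $\overline{K_d'}$.

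For uniqueness, suppose $g\cdot w=w'$ with $w,w'\in\mathrm{int}\,\mathcal F_d$ and $g\in\mathrm{SL}_2(\mathfrak o_d)\setminus\{\pm I\}$. When $c=0$, the map $g$ acts on the $\mathbb C$-factor as $z\mapsto u^2 z+b'$ for some unit $u\in\mathfrak o_d^\times$ and $b'\in\mathfrak o_d$, and by the choice of $K_d$ (or $K_d'$ for $d=1,3$) as a strict fundamental domain for the affine group generated by $\mathfrak o_d$-translations together with the unit rotations, this forces $g=\pm I$. When $c\neq 0$, I would introduce the isometric sphere $I_g=\{z+rj:|cz+d|^2+|c|^2 r^2=1\}$, a Euclidean hemisphere of radius $1/|c|$ centred at $-d/c$; the height formula shows that $g$ maps the exterior of $I_g$ into the interior of $I_{g^{-1}}$. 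The central claim is that $\mathcal F_d\subseteq\overline{\mathrm{ext}(I_g)}$ for every such $g$; granting this, $w\in\mathrm{int}\,\mathcal F_d$ forces $w\in\mathrm{ext}(I_g)$, hence $w'\in\mathrm{int}(I_{g^{-1}})$, contradicting $w'\in\mathrm{int}\,\mathcal F_d$.

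The hardest part will be establishing this central claim. For $|c|=1$ the argument is clean: $-d/c$ is a nonzero $\lambda\in\mathfrak o_d$, and a point of $\mathrm{int}(I_g)\cap\mathcal F_d$ would simultaneously satisfy $|z-\lambda|^2+r^2<1$ and $|z|^2+r^2>1$, giving $|z-\lambda|<|z|$, which contradicts the defining Dirichlet property of $K_d$. For $|c|\geq 2$ the radius $1/|c|\leq 1/2$ is smaller, but the centre $-d/c\in\mathbb Q[\sqrt{-d}]$ need not lie in $\mathfrak o_d$, so a direct appeal to the Dirichlet property is unavailable; one must instead invoke the norm-Euclidean property of $\mathfrak o_d$ (which is precisely what singles out $d\in\{1,2,3,7,11\}$) to translate $-d/c$ back into $\overline{K_d}$ by an element of $\mathfrak o_d$ and show the shifted hemisphere lies beneath the unit hemisphere at $0$. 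This case-by-case geometric check for each of the five rings is the principal technical step.
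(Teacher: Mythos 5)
The paper itself supplies no argument: the proof of Proposition~\ref{proposition: Fundamental Domain for Bianchi Groups} is simply a citation to Bianchi \cite{Bianchi1892} and Swan \cite{SWAN19711}. Your outline is therefore not a paraphrase of the paper's proof but a reconstruction of what those references do, and the overall strategy you describe -- maximise the height $r/(|cz+d|^2+|c|^2r^2)$ over the orbit, normalise by translations (and the finite rotation group for $d=1,3$), then control the remaining side-pairings via isometric spheres -- is indeed the standard Poincar\'e-polyhedron method that Bianchi and Swan follow.

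Two inaccuracies in the hard part are worth flagging. First, your dichotomy ``$|c|=1$ versus $|c|\geq 2$'' skips nonunit values of $c$ with $1<|c|<2$: for $d=1$ one has $|1+i|=\sqrt 2$, for $d=2$ one has $|\sqrt{-2}|=\sqrt 2$, for $d=3$ the element $2+\omega$ has norm $3$, and similarly for $d=7,11$. The correct split is $c$ a unit (handled cleanly by the Dirichlet property, exactly as you wrote) versus $c$ a nonunit, and several of the nonunit cases with $|c|^2\in\{2,3\}$ are among the ones that actually determine the boundary of the Ford domain. Second, the suggested reduction for the nonunit case -- translate $-d/c$ into $\overline{K_d}$ and show the shifted hemisphere lies beneath $H(0)$ -- asks for too much. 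That would require $|{-d/c}-\lambda|+1/|c|\leq 1$, which already fails for $d=1$, $c=1+i$, $d'=1$ (here $|\zeta-\lambda|=1/\sqrt2$ and $1/|c|=1/\sqrt2$, so the sum is $\sqrt2>1$). The isometric sphere is allowed to protrude above $H(0)$ outside the vertical prism over $K_d$; what one must verify is only that $I_g\cap\bigl(\overline{K_d}\times[0,\infty)\bigr)$ lies on or beneath $H(0)$. Concretely, parametrising $I_g$ by $z$, the quantity $|z|^2+r^2$ restricted to $I_g$ is an affine function of $z$, and one checks that this affine function is $\leq 1$ on $\overline{K_d}$. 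That restricted, $K_d$-dependent inequality is the genuine case-by-case computation, and it is what Swan carries out; the covering-radius / norm-Euclidean estimate you invoke is neither sufficient nor, in the displayed form, what is actually proved.

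With those two corrections the outline is sound and would compile into the argument one finds in the cited sources; as written, however, the uniqueness half has a gap precisely at the step you yourself identify as the technical crux.
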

The fundamental domain for $d=2$ is depicted in Figure \ref{fig:F2Bianchi}.
\begin{proof}[Proof of Proposition \ref{proposition: Fundamental Domain for Bianchi Groups}]
    These results date back to Bianchi \cite{Bianchi1892}. An exposition can be found in \cite{SWAN19711}, Chapters 5, 6, 10, 13.
\end{proof}
\begin{figure}
    \centering
    \includegraphics[width=\textwidth]{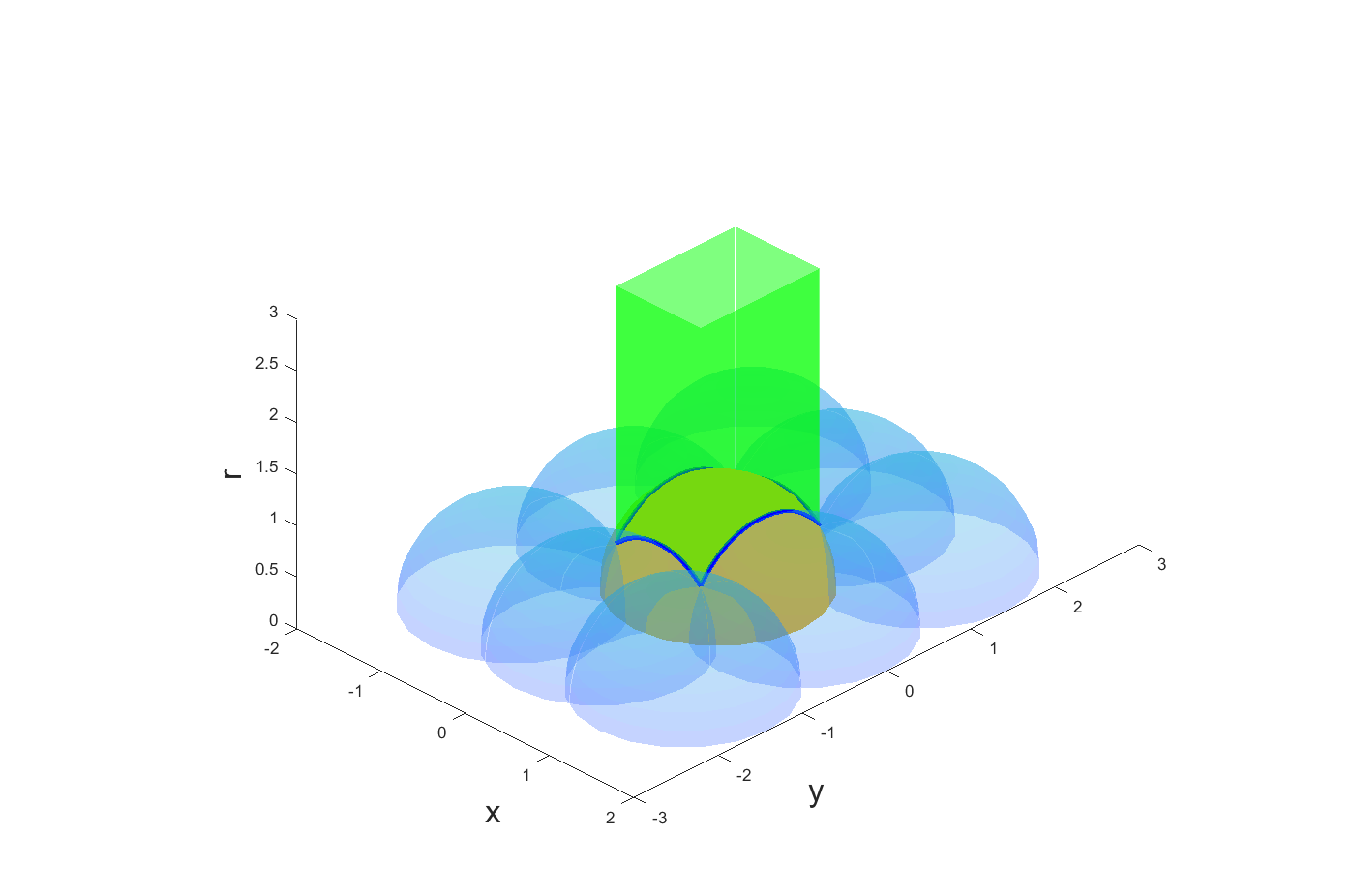}
    \caption{Construction of the fundamental domain for $d=2$. The points $x+yi+rj\in \mathcal{F}_2$ belong to the region bounded by the opaque hemisphere and the four vertical rectangles.}
    \label{fig:F2Bianchi}
\end{figure}
The orbifold $\mathbf{P}_d$ is noncompact and unbounded with respect to the hyperbolic metric. The unbounded end of $\mathbf{P}_d$ corresponding to $r\to \infty$ in $\mathcal{F}_d$ is called the \textbf{cusp}.

In what follows, when we say something holds almost everywhere, we mean with respect to either the Liouville measure on the relevant unit tangent bundle, or the Lebesgue measure on $K_d$.   

\section{From a Fréchet law to cusp excursions}\label{section: cusp excursions}
In \cite{Pollicott2007LIMITINGDF} the geodesic flow on $\mathbf{M}$ is modeled as a suspension flow over the natural extension of the real-valued Gauss map. As far as we are aware, there is no such known interpretation for nearest-integer $\mathfrak{o}_d$-continued fractions. Nicolas Chevallier obtains such a result for a different Gaussian continued fraction algorithm in \cite{chevallier2021gauss}. Lukyanenko and Vandehey \cite{Lukyanenko} establish a partial result by considering so-called geodesic marking for various continued fraction expansions associated to a large class of hyperbolic orbifolds. 

Nevertheless, we are able to establish a weaker correspondence between the digits of the continued fraction expansion and the height of these cusp excursions.

Let us first establish some geometrical intuition. In what follows let $H(b)\subset \mathbf{H}^3$ be the hemisphere of Euclidean radius one with centre $b \in\mathbb{C}$. Suppose we have a geodesic $\gamma$ on $\UT\mathbf{P}_d$. Let $\tilde{\gamma}_0$ be a lift of this geodesic to $\mathbf{H}^3$. We say that $\tilde{\gamma}_0$ has endpoints $(z_1,z_2)$ if $z_1,z_2\in \partial\mathbf{H}^3$ are endpoints of $\tilde{\gamma}_0$ and $z_1$ is the attracting endpoint. Let us assume that $\tilde{\gamma}_0$ satisfies $z_1\in K_d$ and $|z_2|> 1$ \footnote{It is in fact always possible to find such a lift \cite{Abrams}.}. Let $z_1 = [0,a_1,a_2,\ldots]$ and let us first assume for simplicity that this expansion contains no small digits, i.e. $\abs{a_n}\geq 3$ for all $n$. This geodesic intersects $H(0)$ in $\tilde{\gamma}_0(t_0)$ at some time $t_0$. By applying the map $T^{a_1}S$, we obtain another lift $\tilde{\gamma}_1$ of $\gamma$ with endpoints 
\[\left(z_1^{(1)},z_2^{(1)}\right) = -\left(\frac{1}{z_1}-a_1, \frac{1}{z_2}-a_1\right).\]
These endpoints once again satisfy the condition $z_1^{(1)}\in K_d$ and $\abs{z_2^{(1)}}>1$. Define $t_1$ to be the time at which $\tilde{\gamma}_1$ intersects $H(0)$.  
The map $S$ maps the interior of the unit hemisphere $H(0)$ to its exterior and vice versa. Hence the map $T^{a_1}S$ maps the exterior of $H(0)$ to the interior of $H(a_1)$ and maps $\tilde{\gamma}_0(t_0)\in H(0)$ to $\tilde{\gamma}_1(t_0)\in H(a_1)$.
\begin{figure}
    \centering
    \includegraphics[width=0.7\textwidth]{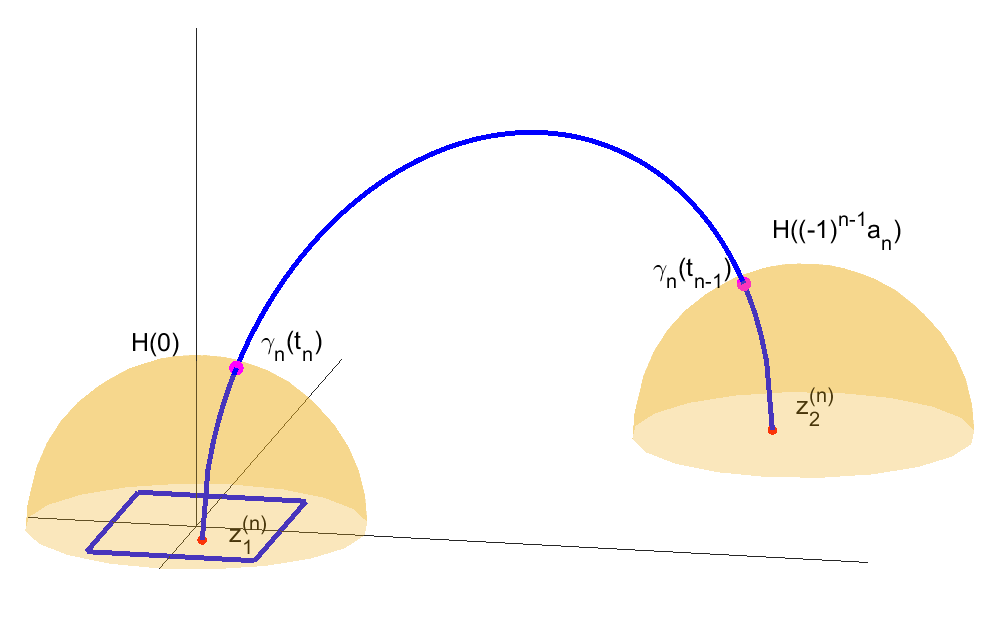}
    \caption{The geodesic $\tilde{\gamma}_n$ on $\mathbf{H}^3$ which arcs from the point $z_1^{(n)}$ to $z_2^{(n)}$. It intersects the spheres $H(a_n)$ and $H(0)$ at time $t_{n-1}$ and $t_{n}$ respectively. }
\label{figure: geodesic arcing between spheres}
\end{figure}
The time interval $[t_0,t_1]$ therefore corresponds to the time $\tilde{\gamma}_1$ spends above the two hemispheres. Take $n=1$ in Figure
\ref{figure: geodesic arcing between spheres} for an illustration. By symmetry of the choice function $\ch{\cdot}_d$, we have that $z_1^{(1)}= [0,-a_2,-a_3,\ldots]$ for almost all values of $z_1$. We can therefore apply the map $T^{-a_2}S$ to obtain a new geodesic lift $\tilde{\gamma}_2$. Iterating this process, we apply the maps $T^{a_3}S,T^{-a_4}S,\ldots$ in order to obtain a sequence of lifts $\{\tilde{\gamma}_n\}_n$ of $\gamma$. Define the sequence $\{t_n\}_n$ by the relation $\tilde{\gamma}_n(t_n)\in H(0)$. Each interval $[t_{n-1},t_n]$ corresponds to the segment of $\tilde{\gamma}_n$ above the hemispheres $H(0)$ and $H\left((-1)^{n-1}a_n\right)$. We refer once again to Figure \ref{figure: geodesic arcing between spheres}. By definition of the metric \eqref{equation: Riemannian metric}, the distance between this apex and the horocycle $\{z+j:z\in \mathbb{C}\}$ is the logarithm of the Euclidean height of the apex, where we define the Euclidean height of a point $z+rj\in \mathbb{H}^3$ to be equal to $r$. Understanding the long term behavior of $\gamma$ therefore boils down understanding the asymptotic behavior of $t_n$ and the height of the apexes. 

The assumption that there are no small digits is too restrictive, however. Indeed, a straightforward consequence of the existence of a Lebesgue-equivalent ergodic invariant probability measure is that the $\mathfrak{o}_d$-continued fraction expansions of almost all complex numbers contain small digits. We discuss these technicalities in the next subsection. 

\subsection{Excursion times}\label{subsection: excursion times}
One complication that may arise with small digits is that the repelling endpoints do not necessarily satisfy the condition $\abs{z_2^{(n)}}>1$ and hence the times $t_n$ discussed in Section \ref{section: cusp excursions} are not always defined. 
To avoid this, we restrict to geodesics $\gamma$ on $\UT\mathbf{P}_d$ with a lift $\tilde{\gamma}_0$ where the repelling endpoint is $\infty$. If such a lift exists, we may assume its endpoints are $(\beta,\infty)$ for some $\beta\in K_d$. In the final proof of Theorem \ref{theorem: cusp excursions on Euclidean Bianchi Orbifolds}, we argue that for sufficiently large times $t$, generic geodesics exhibit the same statistical properties as these model geodesics.

We can also reparametrise $\gamma, \tilde{\gamma}_0$ so that $\tilde{\gamma}_0(0)= \beta+j$. As before, we define
\begin{equation}\label{equation: definition of tilde gammas}
    \tilde{\gamma}_n := (T^{(-1)^{n-1}a_n}S)\circ\tilde{\gamma}_{n-1}
\end{equation}
for all $n\in\mathbb{N}$.
For almost all values of $\beta$, the endpoints of $\tilde{\gamma}_n$ are given by  
\[(-1)^n\hat{G}_d^n(\beta, \infty),\]
where $\hat{G}_d$ is the extension of the map $G_d$ defined by
\[\hat{G}_d(z,w) = \left(\frac{1}{z}-\ch{\frac{1}{z}}_d,\frac{1}{w}-\ch{\frac{1}{z}}_d\right)\]
for all $z,w\in\widehat{\mathbb{C}}$ with $z\neq w$. If $\beta = [0,a_1,a_2,\ldots]$ and $w=\infty$, we see that 
\begin{equation}\label{equation: iterated Gauss}
    \hat{G}^n_d(\beta, \infty) = \left(G_d^n(\beta),-a_n+\frac{1}{ -a_{n-1}+ \frac{1}{\ddots -a_2+\frac{1}{-a_1}}}\right)=\left(G_d^n(\beta),\frac{-q_n(\beta)}{q_{n-1}(\beta)}\right),
\end{equation}
where the last equality is a consequence of \eqref{equation: reversing the digits}. Since $\abs{\frac{q_n}{q_{n-1}}}>1$, the following is well-defined.
\begin{definition}\label{definition: intersection time}
    Let $\beta=[0,a_1,a_2,\ldots]\in K_d$ and let $\{\tilde{\gamma}_n\}_{n\in\mathbb{N}}$ be defined  as above. We define the \textbf{intersection times} $\{t_n\}_n$ by setting $t_n$ to be the time the geodesic $\tilde{\gamma}_n$ intersects with the unit hemisphere $H(0)= \{z+rj\in \mathbf{H}^3:\abs{z}^2+r^2=1\}$ centred at the origin.
\end{definition}

A problem may occur with small digits, however. Indeed, if $\abs{a_n}\leq 2$, then $H(0)$ and $H((-1)^{n-1}a_n)$ overlap. In particular, it is possible $\tilde{\gamma}_n$ intersects the unit hemisphere $H(0)$ first, which would imply that that $t_{n-1}>t_{n}$. Nonetheless, we obtain the following result.

\begin{lemma}\label{lemma: bounding the intersection time}
     Let $(p_n/q_n)_n$ be the convergents of $\beta$ with respect to the nearest-integer $\mathfrak{o}_d$-continued fraction expansions. For all $n\geq1$,
    \begin{equation}\label{equation: intersection time estimate}
        \abs{t_n - 2\log \abs{q_n} - \frac{3}{2}\log\left(1-\abs{\frac{q_{n-1}}{q_n}}^2\right)}\leq D,
    \end{equation}
    for some constant $D$ which depends only on $d$. 
\end{lemma}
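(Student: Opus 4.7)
The plan is to compute $t_n$ by exploiting that $\tilde\gamma_n = \phi_n\circ\tilde\gamma_0$, where $\phi_n := (T^{(-1)^{n-1}a_n}S)\circ\cdots\circ(T^{a_1}S)$ is a product of orientation-preserving isometries. Since $\phi_n$ preserves arc length, the condition $\tilde\gamma_n(t_n)\in H(0)$ is equivalent to the vertical geodesic $\tilde\gamma_0(t)=\beta+e^{-t}j$ meeting the hyperbolic plane $\phi_n^{-1}(H(0))$. A short induction on the convergent recursion \eqref{equation: convergents} represents $\phi_n\in\mathrm{SL}_2(\mathbb{C})$ by a matrix with entries $A,B$ (top row) and $C,D$ (bottom row) satisfying $|A|=|q_n|$, $|B|=|p_n|$, $|C|=|q_{n-1}|$, $|D|=|p_{n-1}|$, and $|C\beta+D|=|q_{n-1}\beta-p_{n-1}|$, all with $\det\phi_n=1$.

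Next, I identify the geometry of $\phi_n^{-1}(H(0))$. Its boundary circle in $\mathbb{C}$, defined by $|\phi_n(z)|^2=1$, rearranges to $(|A|^2-|C|^2)|z|^2+2\mathrm{Re}((A\bar B-C\bar D)z)+(|B|^2-|D|^2)=0$, and the algebraic identity $|A\bar B-C\bar D|^2-(|A|^2-|C|^2)(|B|^2-|D|^2)=|AD-BC|^2=1$ (a direct consequence of $\det\phi_n=1$) gives Euclidean radius $\rho_*=1/(|q_n|^2-|q_{n-1}|^2)$. The intersection of the vertical line at $\beta$ with this hemisphere occurs at height $r_n=e^{-t_n}$ satisfying $r_n^2=\rho_*^2-|\beta-c_*|^2$.

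The main step, and the main obstacle, is the clean collapse of this expression. Writing $\beta-c_*=(\bar A(A\beta+B)-\bar C(C\beta+D))/(|A|^2-|C|^2)$ and applying $(A\beta+B)/(C\beta+D)=\phi_n(\beta)=\alpha_n=(-1)^nG^n(\beta)$ together with the classical identity $|q_{n-1}\beta-p_{n-1}|=1/|q_n+q_{n-1}G^n(\beta)|$ (obtained by substituting $T_{n+1}=1/G^n(\beta)$ into $\beta=(p_nT_{n+1}+p_{n-1})/(q_nT_{n+1}+q_{n-1})$), a direct expansion shows that $1-|\bar A(A\beta+B)-\bar C(C\beta+D)|^2$ simplifies to $(|q_n|^2-|q_{n-1}|^2)(1-|G^n(\beta)|^2)/|q_n+q_{n-1}G^n(\beta)|^2$. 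Substituting yields
\[
r_n^2=\frac{1-|G^n(\beta)|^2}{(|q_n|^2-|q_{n-1}|^2)\,|q_n+q_{n-1}G^n(\beta)|^2},
\]
so taking $-\tfrac{1}{2}\log$, factoring $|q_n|^2-|q_{n-1}|^2=|q_n|^2(1-|q_{n-1}/q_n|^2)$ and $|q_n+q_{n-1}G^n(\beta)|=|q_n|\,|1+(q_{n-1}/q_n)G^n(\beta)|$, and regrouping gives
\[
t_n=2\log|q_n|+\tfrac{1}{2}\log\!\left(1-\bigl|\tfrac{q_{n-1}}{q_n}\bigr|^2\right)+\log\bigl|1+\tfrac{q_{n-1}}{q_n}G^n(\beta)\bigr|-\tfrac{1}{2}\log\bigl(1-|G^n(\beta)|^2\bigr).
\]

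Finally, since each $K_d$ for $d=1,2,3,7,11$ is contained in a Euclidean disc of radius $r_d<1$, $|G^n(\beta)|\leq r_d$ is bounded away from $1$, and the last two displayed terms are uniformly bounded (using $|(q_{n-1}/q_n)G^n(\beta)|<r_d<1$ for the first). The uniform expansion of $G$ (implicit in the exponential mixing of Proposition \ref{proposition: exponential decay}) supplies some $\lambda_d<1$ with $|q_{n-1}/q_n|\leq\lambda_d$ for all $n$, so $\log(1-|q_{n-1}/q_n|^2)$ is itself uniformly bounded; the extra $\log(1-|q_{n-1}/q_n|^2)$ needed to turn the coefficient $\tfrac{1}{2}$ into $\tfrac{3}{2}$ can therefore be absorbed into $D$, completing the proof.
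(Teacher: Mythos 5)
Your exact computation is correct and, in fact, slightly cleaner than the paper's: instead of transporting the intersection point $z_n+r_nj$ of $\tilde\gamma_n$ with $H(0)$ back to $\tilde\gamma_0$ by $P(n,\beta)^{-1}$ and then estimating \eqref{equation: Euclidean height of return point} by bounded equivalences, you compute the isometric sphere $\phi_n^{-1}(H(0))$ directly and intersect it with the vertical geodesic. The resulting closed formula
\[
t_n \;=\; 2\log|q_n|+\tfrac{1}{2}\log\!\left(1-\bigl|\tfrac{q_{n-1}}{q_n}\bigr|^2\right)+\log\bigl|1+\tfrac{q_{n-1}}{q_n}G^n(\beta)\bigr|-\tfrac{1}{2}\log\bigl(1-|G^n(\beta)|^2\bigr)
\]
is exact, and the last two terms are bounded uniformly in $n$ since $|G^n(\beta)|\le\sup_{K_d}|z|<1$ and $|q_{n-1}/q_n|<1$. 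This shows the correct coefficient in front of $\log(1-|q_{n-1}/q_n|^2)$ is $\tfrac12$, not $\tfrac32$. In the paper's proof the step "$1/r'\asymp r_n^{-1}|(-1)^nq_{n-1}z_n+q_n|^2$" silently discards the term $r_n|q_{n-1}|^2$ in \eqref{equation: Euclidean height of return point}; that term is only negligible when $|q_n/q_{n-1}|^2-1$ is bounded below, which need not hold. Carrying both terms through the paper's own asymptotics collapses exactly to your formula, so your derivation also identifies a (harmless) error in the stated lemma.

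Your final paragraph, however, contains a genuine gap: you assert that there is a constant $\lambda_d<1$ with $|q_{n-1}/q_n|\le\lambda_d$ for \emph{all} $n$, and use this to absorb the coefficient discrepancy into $D$. That assertion is false. Proposition \ref{proposition: growth of denominator} only gives $|q_{n-1}/q_n|<1$, with no uniform gap, and consecutive small digits can push $|q_{n-1}/q_n|$ arbitrarily close to $1$ — this is precisely the "small digits" difficulty that Section \ref{subsection: excursion times} and Lemma \ref{lemma: subsequence of multiplicative growth} are designed to handle: one only gets $|q_{n_k}/q_{n_k-1}|\ge r_d$ along a subsequence with uniformly bounded gaps, not everywhere. (Exponential $\psi$-mixing of the digit sequence is a statement about measures of cylinders, not a pointwise expansion bound, so Proposition \ref{proposition: exponential decay} does not supply such a $\lambda_d$ either.) Fortunately this does not damage your result: your exact formula with coefficient $\tfrac12$ is the correct statement, and the downstream uses in Proposition \ref{proposition: asymptotic formula for excursion times} and Lemma \ref{lemma: excursion height is given by digits} only invoke the estimate along the subsequence of Lemma \ref{lemma: subsequence of multiplicative growth}, where the $\log(1-|q_{n-1}/q_n|^2)$ term is bounded and its coefficient is immaterial. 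So you should simply state the lemma with $\tfrac12$ and drop the final absorption argument.
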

The proof involves finding the matrix  $g\in \PSLo$ such that $\tilde{\gamma}_n =g\tilde{\gamma}_0$ and representing it in terms of the associated quotient pair of sequences. By a straightforward but somewhat lengthy calculation, we find an explicit formula for $t_n$ from which the lemma follows. The details can be found in Section \ref{section: intersection times and cusp excursion}. 

The following definition provides a monotone sequence based on the intersection times.
\begin{definition}\label{definition: excursion times}
    Define the \textbf{excursion times} $(t^*_n)_{n\in\mathbb{N}}$ by setting 
    \[t^*_n = \max_{i\in\{1,2,\ldots n\}} t_i. \]
\end{definition}

In Section \ref{section: intersection times and cusp excursion}, we use standard results in the theory of continued fractions to argue that there is some $r>1$ such that for all irrational numbers $z\in K_d$, there is a subsequence $q_{n_k}$ of denominators such that $\abs{q_{n_k+1}/q_{n_k}}>r$ for all $k$. Using Lemma \ref{lemma: bounding the intersection time} and an application of the Birkhoff ergodic theorem, we shall obtain the following proposition.
\begin{proposition}\label{proposition: asymptotic formula for excursion times}
There exists a constant $C^*>0$ such that for almost every $\beta \in K_d$, the excursion times $(t_n^*)_n$ in Definition \ref{definition: excursion times} satisfy 
\[\lim_{n\to \infty} \frac{t_n^*}{n} = C^*.\]
\end{proposition}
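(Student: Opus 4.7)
The plan is to sandwich the excursion times between $(2+o(1))\log|q_n|$ above and $(2+o(1))\log|q_{n_k}|$ below for a suitable subsequence $(n_k)$, and then invoke a L\'evy-type asymptotic for $\log|q_n|$ to identify a common linear growth rate. First I would record the L\'evy constant: by ergodicity of $\mu_d$ (Lemma~\ref{lemma: bounded equivalence with Lebesgue measure}) and Birkhoff's theorem applied to $\log|G'|$, together with the standard identification of $\log|q_n(\beta)|$ with a Birkhoff sum along the $G$-orbit of $\beta$ up to a bounded error, one obtains a positive constant $\lambda$ with $\frac{1}{n}\log|q_n(\beta)| \to \lambda$ for almost every $\beta \in K_d$. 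Positivity of $\lambda$ follows from the fact that $G$ is uniformly expanding on $K_d$.

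For the upper bound, since $|q_{n-1}/q_n|<1$ by Proposition~\ref{proposition: growth of denominator}, the log-correction $\log(1-|q_{n-1}/q_n|^2)$ in Lemma~\ref{lemma: bounding the intersection time} is nonpositive, yielding $t_n \leq 2\log|q_n|+D$ for every $n$. Monotonicity of $(|q_n|)_n$ then transfers this estimate to the maxima, giving $t_n^* \leq 2\log|q_n|+D$, and dividing by $n$ produces $\limsup_{n\to\infty} t_n^*/n \leq 2\lambda$.

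For the lower bound, I would invoke the result from \cite{Ei22} alluded to immediately before the proposition to exhibit a subsequence $(n_k)_k \subset \mathbb{N}$ with positive asymptotic density, i.e.\ $\lim_{k\to\infty} n_k/k = 1/\alpha$ for some $\alpha\in(0,1]$, along which $|q_{n_k-1}/q_{n_k}|$ is bounded away from $1$. On this subsequence, Lemma~\ref{lemma: bounding the intersection time} delivers $t_{n_k} \geq 2\log|q_{n_k}| - D'$, while the L\'evy asymptotic furnishes $\frac{1}{n_k}\log|q_{n_k}| \to \lambda$. Selecting $n_k$ maximal with $n_k\leq n$, positive density forces $n_k/n \to 1$, so
\[
\frac{t_n^*}{n} \;\geq\; \frac{t_{n_k}}{n} \;=\; \frac{t_{n_k}}{n_k}\cdot\frac{n_k}{n} \;\longrightarrow\; 2\lambda.
\]
Setting $C^* = 2\lambda$ completes the argument. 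The main obstacle is the construction of the positive-density subsequence in the lower bound: the pathological scenario to rule out is that the log-correction in Lemma~\ref{lemma: bounding the intersection time} dominates on a set of full density, in which case $t_n^*$ could grow sublinearly relative to $2\log|q_n|$. Eliminating this possibility requires the information about the natural extension of $G$ and its invariant measure developed in \cite{Ei22}, which guarantees that the ratio $q_{n-1}/q_n$ lies in a definite disc $\{|w|\leq c < 1\}$ along a set of indices of positive density.
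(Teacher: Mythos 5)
Your strategy matches the paper's: establish a L\'evy-type asymptotic $\frac{1}{n}\log|q_n|\to\lambda>0$ via Birkhoff's theorem and the identification of $\log|q_n|$ with a Birkhoff sum (the paper gets this from equations (6.1) and (6.4) in the appendix), bound $t_n^*$ above by $2\log|q_n|+D$ using the nonpositivity of the logarithmic correction in Lemma~\ref{lemma: bounding the intersection time}, and bound it below along a subsequence where $|q_{n-1}/q_n|$ stays away from $1$.

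However, the lower bound as you wrote it has a genuine gap. You assert that if $(n_k)$ has positive asymptotic density and $n_k$ is chosen maximal with $n_k\leq n$, then $n_k/n\to 1$. This implication is false: a set of density one (let alone merely positive density) can contain gaps of the form $[m,2m]$ along a sparse sequence of $m$'s, and for $n$ near the right end of such a gap one has $n_k/n$ bounded away from $1$. Positive density only controls the gaps on average; it does not prevent $n-n_k$ from being comparable to $n$ infinitely often, in which case your sandwiching only gives $\liminf t_n^*/n\geq 2\lambda\rho$ for some $\rho<1$, which does not match the upper bound. What is actually needed, and what the paper proves in Lemma~\ref{lemma: subsequence of multiplicative growth}, is the stronger property that the gaps $n_{k+1}-n_k$ are \emph{uniformly bounded} by a constant $M_d$ depending only on $d$ (and independent of $\beta$). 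This follows from the Ford-sphere inequality \eqref{equation: quality of approximation} of \cite{Ei22} together with the multiplicative lower bound \eqref{equation: bounded equivalence to Birkhoff product}: a long run of indices with $|q_{n+1}/q_n|\leq r_d$ would force $|q_{n+m}/q_n|\leq r_d^m$, contradicting the uniform exponential growth. With bounded gaps, $n-n_k\leq M_d$ is bounded, so $n_k/n\to 1$ holds trivially, and the rest of your argument goes through. So you have the right reference and the right kind of ingredient in mind, but you need the uniform gap bound, not just positive density, and you would need to actually prove it.
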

The details may be found in Section \ref{section: intersection times and cusp excursion}. 

We establish a correspondence between the digits of the continued fraction expansion and the maximum height of the corresponding cusp excursions, which we measure as follows. Denote by $\mathcal{H}$ the natural projection of the horosphere $\{z+j:z\in \mathbb{C}\}\subset \mathbb{H}^3$ to $\mathbf{P}_d$. Then we define $h: \UT\mathbf{P}_d\to [0,1) $ to be the function which assigns to a vector $w\in \UT\mathbf{P}_d$ the hyperbolic distance in $\mathbf{P}_d$ from the base point of $w$ to $\mathcal{H}$. 
\begin{lemma}\label{lemma: excursion height is given by digits}
    Let $\tilde{w}_\beta$ be the tangent vector to $\tilde{\gamma}_0$ at $\tilde{\gamma}_0(0)$. Let $\tilde{w}_\beta(t)$ be the vector obtained by applying the geodesic flow for some time $t$. Let $w_\beta$ and $w_\beta(t)$ be their respective projections to $\UT \mathbf{P}_d$. The sequence
    \[n\mapsto \abs{\max_{t\in [0,t_n^*]} e^{h(w_\beta(t))}- \frac{1}{2}\max_{k\leq n}\abs{a_k}}\]
    is uniformly bounded from above for all $\beta\in K_d\backslash \mathbb{Q}$.
\end{lemma}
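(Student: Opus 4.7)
The plan is to reduce the assertion to three ingredients: (i) the Euclidean geometry of each lift $\tilde{\gamma}_k$ as a half-circle in $\mathbf{H}^3$, (ii) the relation between orbifold distance from $w_\beta$ and Euclidean height of a lift in the cusp region, and (iii) the identification, using Proposition \ref{proposition: growth of denominator}, of the apex height of $\tilde{\gamma}_k$ with $|a_k|/2$ up to a uniformly bounded error.

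First, recall from \eqref{equation: iterated Gauss} that $\tilde{\gamma}_k$ is a geodesic in $\mathbf{H}^3$ with boundary endpoints $(-1)^k G^k(\beta)$ and $(-1)^{k+1} q_k/q_{k-1}$, hence is a Euclidean semicircle in a vertical plane whose apex has Euclidean height
\[R_k \;:=\; \tfrac{1}{2}\bigl|G^k(\beta) + q_k/q_{k-1}\bigr|.\]
Using the recursion $q_k/q_{k-1} = a_k + q_{k-2}/q_{k-1}$, the inequality $|q_{k-2}/q_{k-1}|<1$ from Proposition \ref{proposition: growth of denominator}, and the fact that $G^k(\beta)\in K_d$ which is bounded, the triangle inequality gives $|2R_k - |a_k|| \leq C_1$ for a constant $C_1$ depending only on $d$.

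Second, I would show that for $t$ in an excursion interval $[t_{k-1}, t_k]$ where $|a_k|$ is large, the orbifold distance satisfies $|e^{d(w_\beta, w_\beta(t))} - r(\tilde{\gamma}_k(t))| \leq C_2$, where $r(\cdot)$ is the Euclidean $r$-coordinate in $\mathbf{H}^3$. The key point is that the stabiliser of the cusp at $\infty$ contains the translation group $\{T^q:q\in\mathfrak{o}_d\}$, which preserves $r$ and acts cocompactly on horospheres $\{r=\text{const}\}$. For $r$ large, no non-parabolic element of $\PSLo$ can move $\tilde{\gamma}_k(t)$ closer to $\beta+j$ (this is just a horocyclic-neighbourhood argument of Margulis type). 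Hence the closest lift $z'+rj$ of $\pi(\tilde{\gamma}_k(t))$ to $\beta+j$ satisfies $|z'-\beta|\leq \tfrac12\mathrm{diam}(K_d)$, and the explicit formula $\cosh d_{\mathbf{H}^3}(\beta+j, z'+rj) = 1 + (|z'-\beta|^2+(r-1)^2)/(2r)$ yields $e^{d(w_\beta,w_\beta(t))} = r + O(1)$. When no lift of $w_\beta(t)$ has Euclidean height above a fixed threshold (say $r_0 = 2$), the orbifold distance is bounded, contributing only an $O(1)$ term.

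Third, I combine the two facts. During the interval $[t_{k-1}, t_k]$, the arc $\tilde{\gamma}_k$ lies above the hemispheres $H(0)$ and $H((-1)^{k-1}a_k)$, so its Euclidean height is maximised at the apex with value $R_k$ precisely when $|a_k|$ is large enough that the apex belongs to $[t_{k-1}, t_k]$; otherwise the height on this interval is $O(1)$, which is then absorbed into the error. By time $t_{n+1}^*$ every index $k\leq n+1$ has been visited as an excursion (possibly in a non-monotone order, but always exhibiting the apex of the corresponding lift since all $t_k$'s with $|a_k|$ large are monotone). Taking the max over $k$ and using $|2R_k-|a_k||\leq C_1$ gives
\[\max_{t\in[0,t_{n+1}^*]} e^{d(w_\beta,w_\beta(t))} \;=\; \tfrac{1}{2}\max_{k}|a_k| + O(1),\]
where the $O(1)$ term is uniform in $n$, establishing the lemma after showing that the difference between $\max_{k\leq n}|a_k|$ and the correct index range contributes only a bounded amount (the excursion whose apex is the very last one reached in $[0,t_{n+1}^*]$ being captured by $\max_{k\leq n}|a_k|$ via Lemma \ref{lemma: bounding the intersection time}, since the apex precedes the crossing of $H(0)$).

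The main obstacle I expect is step two: the orbifold-distance estimate requires a careful check that for $r$ large enough, no element of $\PSLo$ outside the parabolic stabiliser can compete with the translations to produce a nearer lift. This is a standard consequence of the thick-thin decomposition but needs to be made quantitative to extract the explicit $O(1)$ control on $e^{d(w_\beta,w_\beta(t))}-r$ rather than just a multiplicative comparison. A secondary, bookkeeping issue is the treatment of indices $k$ with $|a_k|$ so small that the hemispheres $H(0)$ and $H((-1)^{k-1}a_k)$ overlap; there the excursion interval may be degenerate, but such $k$ contribute a uniformly bounded height and so are harmless for the maximum.
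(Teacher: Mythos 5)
Your plan correctly identifies the two geometric facts the paper also uses: the apex of $\tilde{\gamma}_k$ has Euclidean height $\tfrac{1}{2}\abs{G^k(\beta)+q_k/q_{k-1}}=\tfrac{1}{2}\abs{a_k}+O(1)$ (via \eqref{equation: iterated Gauss} and the recursion for $q_k$), and in the cusp region $e^{d(w_\beta,w_\beta(t))}$ differs from the Euclidean $r$-coordinate of the lift by $O(1)$. But you misjudge where the real difficulty lies. You classify the treatment of small digits and non-monotone $(t_n)$ as a ``secondary, bookkeeping issue'' to be absorbed into the error, whereas that is precisely where the paper spends nearly all of its effort. When $\abs{a_k}\leq 2$ the hemispheres $H(0)$ and $H((-1)^{k-1}a_k)$ overlap, $t_{k-1}>t_k$ can occur, and the statement ``on $[t_{k-1},t_k]$ the relevant lift is $\tilde{\gamma}_k$'' ceases to be meaningful --- the intervals no longer tile $[0,\infty)$ in order. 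Your proposal implicitly assumes they do, which is exactly the picture that fails generically.

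To repair this the paper passes, via Lemma~\ref{lemma: subsequence of multiplicative growth}, to the subsequence $(m_k)_k$ of record times, shows the gaps $m_{k+1}-m_k$ are uniformly bounded, establishes via \eqref{equation: large digits appear in m_k} that every large digit is a record (so none is missed), and then --- crucially --- proves that the set of intermediate transition maps $\{Q(k)\}_k$ is \emph{finite}, which is what gives a uniform positive lower bound on the Euclidean heights of the entry and exit points $w_1(k),w_2(k)$ of each genuine excursion arc. Without that lower bound you cannot compare the orbifold distance on $[t_{m_{k-1}},t_{m_k}]$ to the height of $\tilde{\gamma}_{m_k}$ in both directions: you cannot rule out that some other lift becomes tall during a run of small digits, nor can you guarantee that the apex of a large-digit excursion is reached rather than clipped. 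Your proposal contains no analogue of this argument. Relatedly, the claim ``all $t_k$'s with $\abs{a_k}$ large are monotone'' is not the paper's \eqref{equation: large digits appear in m_k} --- the paper proves $\abs{a_n}>M$ implies $t_n\geq t^*_{n-1}$, a record property, not monotonicity of the large-digit subsequence in isolation; you need the full $(m_k)_k$ construction to extract the lemma's statement from that.
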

By \eqref{equation: iterated Gauss} we have that the Euclidean height of the apex of $\tilde{\gamma}_n$ is approximately equal to $|a_n|/2$. The hyperbolic distance to the horocycle $\{z+j:z\in\mathbb{C}\}$ is the logarithm of that. If $\abs{a_n}>2$, then $\tilde{\gamma}_n$ reaches this apex in the time interval $[t_{n-1},t_{n}]$. The remainder of the argument involves proving the intuitive assumption that sequences of small digits do not correspond to cusp excursions. A detailed proof can be found in Section \ref{section: intersection times and cusp excursion}.

\subsection{Proof of Theorem \ref{theorem: cusp excursions on Euclidean Bianchi Orbifolds}}\label{subsection: main theorem}
Theorem \ref{theorem: cusp excursions on Euclidean Bianchi Orbifolds} is a theorem about the Liouville measure $m$ on $\UT\mathbf{P}_d$. We define a surjective map from $\UT\mathbf{P}_d$ to $K_d$ for which the pushforward of $m$ is absolutely continuous with respect to the Lebesgue measure. This will allow us to apply Theorem \ref{theorem: Gumbel Law for Bianchi Groups}. Recall that we defined $\mathcal{F}_d$ to be the fundamental domain of the action of $\PSLo$ on $\mathbf{H}^3$. For almost all $v$, there is a unique lift $v'\in \UT\mathbb{H}^3$ with base point in $\mathcal{F}_d$. Almost surely, there exists a unique $q\in\mathfrak{o}_d$ such that if we define
\[\tilde{v}:= T^q v'=\begin{pmatrix}
            1 & q\\
            0 & 1
        \end{pmatrix} v',\] 
then the oriented unit speed geodesic $t\mapsto\tilde{v}(t)$ with tangent vector $\tilde{v}$ at time $t=0$ has attracting endpoint $\tilde{v}_\infty\in K_d$. Let $\pi: \UT\mathbf{P}_d\to K_d$ be the map which sends $v\mapsto \tilde{v}_\infty$.
The following result is easily seen. 
\begin{lemma}\label{lemma: pushforward is absolutely continuous}
    Let $m$ be the Liouville measure on $\UT \mathbf{P}_d$. The pushforward measure $m\circ \pi^{-1}$ on $K_d$ is equivalent to the Lebesgue measure. 
\end{lemma}

\begin{proof}[Proof of Theorem \ref{theorem: cusp excursions on Euclidean Bianchi Orbifolds}]
    % For $\beta=[0,a_1,\ldots]\in K_d$, let $\tilde{w}_\beta, \tilde_\beta$ be as in Lemma \ref{lemma: excursion height is given by digits} and define the sequence of lifted geodesics $\{\tilde{\gamma}_n\}_n$ and intersection times $t_n^*$ as in Definition \ref{definition: intersection time}. The maximal Euclidean height of a geodesic with endpoint $(c,d)$ over the complex plane is given by $\abs{c-d}/2$, so by \eqref{equation: endpoints under iteration} we have that the Euclidean height $h_n$ of $\tilde{\gamma}_n$ s
    We first claim that elements in the same fibre of $\pi$ have the same asymptotic behaviour for large $T$. Let $v\in\pi^{-1}(\beta)$ and let $\tilde{w}_\beta, w_\beta$ be as in Lemma \ref{lemma: excursion height is given by digits}.
    % We first show that for all $\epsilon>0$ and $\Delta \in \mathbb{R}$, 
    % \[\mu_d\left\{\beta:\abs{\sup_{0 \leq t\leq T} d(w_\beta,w_\beta(t)) - \sup_{\Delta \leq t\leq T+\Delta} d(w_\beta,w_\beta(t))} >\epsilon\right\}\to 0\]
    % as $T\to\infty$. By Lemma \ref{lemma: excursion height is given by digits} and Theorem \ref{theorem: Gumbel Law for Bianchi Groups}, the geodesic $t\mapsto w_\beta(t)$ has arbitrarily high cusp excursion for sufficiently large times. For such large times, the suprema above only differ if $d(w_\beta,w_\beta(t))$ attains a new maximum in $[t,t+\Delta]$, but this happens proportionally rarely. Indeed, Proposition \eqref{proposition: asymptotic formula for intersection times} gives an asymptotic formula for the intersection times 
    By definition of $\pi$, there is a lift $\tilde{v}\in \UT\mathbf{H}^3$ of $v$ with attracting endpoint $\beta$. Elementary hyperbolic geometry shows that there exists a $\Delta\in \mathbb{R}$ such that $d(\tilde{w}_\beta(t+\Delta),\tilde{v}(t)) \to 0\text{ as }t\to\infty$. Hence $d(w_\beta(t+\Delta),v(t)) \to 0\text{ as }t\to\infty$, which proves the claim. Every $\alpha\in \mathbb{R}$ therefore satisfies
    \begin{equation}\label{equation: cuspexcursionmanipulations1}
        \begin{split}
            &\lim_{T\to\infty} m\left\{ v\in \UT\mathbf{P}_d : \sup_{0 \leq t\leq T} h(v(t)) -\frac{1}{2}\log T \leq \log(u)+\alpha\right\}\\
            =& \lim_{T\to\infty} (m\circ\pi^{-1})\left\{ \beta\in K_d : \sup_{0 \leq t\leq T} h(w_\beta(t)) -\frac{1}{2}\log T \leq \log(u)+\alpha\right\}\\
            =& \lim_{T\to\infty} (m\circ\pi^{-1})\left\{ \beta\in K_d : \sup_{0 \leq t\leq T} e^{h(w_\beta(t))}\leq e^\alpha \sqrt{T}u\right\},
        \end{split}
    \end{equation}
 provided the first limit exists. Let $(t_n^*)_n$ as in Definition \ref{definition: excursion times}. Proposition \ref{proposition: asymptotic formula for excursion times} states that the typical number of cusp excursions $N$ after some time $T$ is approximately equal to $T/C^*$. Lemma \ref{lemma: excursion height is given by digits} shows that the maximum value $e^{d(w_\beta,w_\beta(t))}$ attained in these $N$ excursions is asymptotically equal to $\frac{1}{2}\max_{1\leq n\leq N}|a_n(\beta)|$, so we obtain that the expression above is equal to 
\begin{equation}\label{equation: cuspexcursionmanipulations2}
    \lim_{N\to +\infty} (m\circ\pi^{-1})\left\{ \beta\in K_d : \max_{1\leq n\leq N}|a_n(\beta)|\leq 2e^\alpha \sqrt{C^* N}u\right\}.
\end{equation}
    A more precise derivation of the above step is given in Section \ref{section: intersection times and cusp excursion}.
    Define $\kappa_d$ to be the solution to $2e^\alpha \sqrt{C^*} =C_d$, where $C_d$ is the constant appearing in Theorem \ref{theorem: Gumbel Law for Bianchi Groups}. If $\alpha = \kappa_d$ then by Proposition \ref{proposition: limit theorem for complex continued fractions but complicated} the limit evaluates to $e^{-1/u^2}$, which is what we had to prove. A version of this theorem for the $k$-th largest excursion follows analogously after replacing $e^{-1/u^2}$ with $e^{-1/u^2}\sum_{l=0}^{k-1} u^{-2l}/l^2$.
\end{proof}

\section{From cusp excursions to Diophantine approximation}\label{section: Diophantine}
The proof of Theorem \ref{theorem: cusp excursions on Euclidean Bianchi Orbifolds} does not give an explicit expression for the constant $\kappa_d$, since we do not have an exact form for the invariant measure $\mu_d$, see \cite{Ei2019}, \cite{Ei23+}. We obtain this constant by comparing results from our analysis of nearest-integer continued fraction expansions with those obtained from a version of the Koebe-Morse coding for three-dimensional orbifolds. This involves calculating some volumes, so we first briefly discuss some natural measures on $\mathbf{H}^3$ and $\UT\mathbf{H}^3$. 

\subsection{The Liouville measure on $\UT\mathbf{H}^3$}
Recall that the natural volume measure $m_{\mathbb{H}^3}$ induced by the Riemannian Metric \eqref{equation: Riemannian metric} on $\mathbb{H}^3$ is given by \[dm_{\mathbb{H}^3}=\frac{dxdydr}{r^3}.\] 
Denote the $2$-sphere by $\mathbb{S}^2$. Under the natural identification of $\UT\mathbb{H}^3$ with $\mathbb{H}^3\times \mathbb{S}^2$ the Liouville measure $m$ is given by $m= \mathrm{vol}\times m_{\mathbb{S}^2}$, where $m_{\mathbb{S}^2}$ is the natural measure on the sphere. It will be useful to consider the following alternative coordinate system on $\UT\mathbf{H}^3$.

\begin{lemma}\label{lemma: Endpoint Coordinates for the Unit Tangent Bundle}
    For a given unit tangent vector $(w,u)$, let $(\beta,\alpha)$ the endpoints of the unique geodesic $\tilde{\gamma}$ it is on. Reparametrise the geodesic so that $\tilde{\gamma}(0)$ is the Euclidean apex of the geodesic and let $t\in\mathbb{R}$ satisfy $(\tilde{\gamma}(t).\tilde{\gamma}'(t))= (w,u)$. In these coordinates, the Liouville measure is given by 
\begin{equation}\label{equation: Liouville in alpha-beta coordinates}
    dm = \frac{4d\lambda(\alpha)d\lambda(\beta)dt}{\abs{\beta-\alpha}^4},
\end{equation}
where $\lambda$ is the Lebesgue measure on $\mathbb{C}$.
\end{lemma}
\begin{remark}
    It is straightforward to verify that this measure is indeed invariant under all Möbius transformations and therefore indeed corresponds to the Liouville measure on $\UT\mathbb{H}^3$.
\end{remark}

\subsection{Calculating the constant in Theorem \ref{theorem: Gumbel Law for Bianchi Groups}.}\label{subsection: calculating alpha}
We introduced some unknown constants in the proof of Theorem \ref{theorem: Gumbel Law for Bianchi Groups}. However,we shall show it is fairly straightforward application of Kac's Lemma to calculate the probability that a given vector in $\mathbf{P}_d$ sampled from the Liouville measure lies a certain distance up the cusp, and use the relationship between cusp excursions and continued fractions to calculate $\kappa_d$. 
 Consider first the case where $d=2,5,7,11$. The cases $d=1,3$ require minor modifications due to the extra generator for the Bianchi group.

\begin{lemma}\label{lemma: average return times to to A_R}
    Use $(\beta,\alpha,t)$-coordinates from Lemma \ref{lemma: Endpoint Coordinates for the Unit Tangent Bundle}, and let $W:(\mathbb{C}\backslash \{(z,z):z\in\mathbb{C}\})\times \mathbb{R}\to\UT\mathbb{H}^3$ be the coordinate map which sends $(\beta,\alpha,t)$ to its corresponding unit vector. Consider the set 
    \begin{equation}\label{equation: tilde A_R}
       \tilde{A}_R=\{ W(\beta,\alpha,0) \in\UT\mathbf{H}^3: \beta\in K_d, \abs{\alpha}> R\},
    \end{equation}
     where $R>3$. Note that this is a collection of horizontal unit tangent vectors at height $\abs{\beta-\alpha}/2$. Denote by $A_R$ its projection to $\UT\mathbf{P}_d$. Then for almost all $v$,
     \begin{equation}
    \lim_{T\to +\infty}\frac{ \#\{t\in[0,T]: v(t)\in A_{R} \} }{T} = 4\frac{\pi \lambda(K_d)}{\text{vol}(\mathbf{P}_d)R^2}+O\left(\frac{1}{R^3}\right),
\end{equation}
where $\text{vol}$ is the hyperbolic volume measure.
\end{lemma}
\begin{proof}
    We use a thickening argument. Given a tangent vector $v\in\UT\mathbf{H}^3$ or $\UT\mathbf{P}_d$, denote by $v(t)$ the point obtained by applying the geodesic flow for time $t$. The Liouville measure $m$ restricts to a measure on $\UT\mathbf{P}_d$ by identifying the latter with its fundamental domain $\mathcal{F}_d$. We will also denote this measure by $m$.
    
    Let $\tilde{A}_{R,\epsilon}=\{\tilde{v}(t):\tilde{v}\in \tilde{A}_R, t\in[0,\epsilon]\}$ with $\epsilon>0$. For sufficiently small $\epsilon$, denote by $A_{R,\epsilon}$ the projection to $\UT\mathbf{P}_d$. Note that $A_{R,\epsilon} = \{v(t):v\in A_R, t\in[0,\epsilon]\}$. For sufficiently small $\epsilon$, we claim that $m(A_{R,\epsilon} ) = m(\tilde{A}_{R,\epsilon})$. Indeed, vectors lying above the unit hemispheres $\{z+rj:\abs{z-a}^2+r^2=1\}$ for all $a\in\mathfrak{o}_d$ can only be identified by translations in $\mathfrak{o}_d$. By our assumption on $R$, the unit tangent vectors lie above the horocycle $r=2$ and thus above the hemispheres, hence it follows from the definition of $A_R$ and $A_{R,\epsilon}$ that the projection map restricted to $A_{R,\epsilon}$ is injective.
    
Fix $R>3$. By ergodicity of the geodesic flow and Kac's lemma, we have for almost all $v\in\UT\mathbf{P}_d$ that
\begin{equation}
    \lim_{T\to +\infty} \int_{0}^T \mathbf{1}_{A_{R,\epsilon}}(v(t)) = \frac{m(\tilde{A}_{R,\epsilon})}{\text{vol}(\mathbf{P}_d)},
\end{equation}
where $\mathbf{1}_{A_{R,\epsilon}}$ is the characteristic function on $A_{R,\epsilon}$. 
In $(\beta,\alpha,t)$, coordinates, we note that 
\[m(\tilde{A}_{R,\epsilon}) = \epsilon\int_{K_d}\int_{\alpha>R} \frac{4}{\abs{z-w}^4}d\lambda(\alpha)d\lambda(\beta).\]
On each occasion in which $v(t)$ enters $A_{R,\epsilon}$, it enters through $A_R$ and spends a time of $\epsilon$ in $A_{R,\epsilon}$ before leaving the set. We thus obtain for almost all $v\in\UT\mathbf{P}_d$ that 

\begin{equation}\label{equation: continuity of peaking times}
    \lim_{T\to +\infty}\frac{ \#\{t\in[0,T]: v(t)\in A_{R} \} }{T} = \frac{1}{\text{vol}(\mathbf{P}_d)} \int_{K_d}\int_{\alpha>R} \frac{4}{\abs{\alpha-\beta}^4}d\lambda(z)d\lambda(\beta).
\end{equation}
Since a countable intersection of sets of full measure is also of full measure, we have that almost all $v\in\UT\mathbf{P}_d$ satisfy the above formula for all $R\in\Big(\mathbb{Q}\cap [3,\infty)\Big)$. By continuity in $R$ on the right hand side and monotonicity on both sides, this also holds for all $R\in[3,\infty)$.

Since $|z-w|^{-4} = |z|^{-4}+O(\abs{z}^{-5})$ as $\abs{z}\to +\infty$ for $w\in K_d$, we obtain that 
\begin{equation}
    \begin{split}
        \lim_{T\to +\infty}\frac{ \#\{t\in[0,T]: v(t)\in A_{R} \} }{T} &= \frac{1}{\text{vol}(\mathbf{P}_d)} \int_{K_d}\int_{\alpha>R} \frac{4}{\abs{\alpha}^4}d\lambda(\alpha)d\lambda(\beta)+O(r^3)\\
        &=\frac{2\pi\lambda({K}_d)}{\text{vol}(\mathbf{P}_d)} \int_R^{+\infty} \frac{4}{\tau^3}d\tau+O(r^3);
    \end{split}
\end{equation}
the lemma follows. 
\end{proof}

Consider once more the notation of Lemma \ref{section: cusp excursions}.  In particular, consider the model geodesic $\gamma$, its lifts $\tilde{\gamma}_n$ and the model tangent vector $w_\beta$ from Subsection \ref{subsection: main theorem}. As in Section \ref{section: cusp excursions}, denote the tangent vector to $\gamma$ at time $t$ by $w_\beta(t)$. In the proof of Lemma \ref{lemma: excursion height is given by digits} in Section \ref{section: intersection times and cusp excursion}, we show that if $\beta \notin \mathbb{Q}$ there exists some sequence $m_k$ of increasing natural numbers for which $\tilde{\gamma}_{m_k}([t_{m_{k-1}},t_{m_k}])$ is uniformly bounded from below in Euclidean height and that there exists an $M>0$ such that $\abs{a_n}>M$ implies that $n\in \{m_k\}_k$. Since the vectors belonging to $\tilde{A}_R$ are aligned perpendicularly to $\mathbb{C}$, this implies that there is some $D>0$ such that if $R\geq D$, any intersection of $t\mapsto w_\beta(t)$ with $A_R$  at time $\tau$ can only occur when $\tilde{\gamma}_n(\tau)$ is the apex of the geodesic $\tilde{\gamma}_n$ for some $n$. Referring once more to Figure \ref{figure: geodesic arcing between spheres} we see in particular that $a_n$ must satisfy $\abs{a_n}>R-2$. 
    
    Conversely, if $\tilde{\gamma}_n$ has an apex $\tilde{\gamma}_n(\tau)$, it is fairly straightforward to see that $\abs{a_n}\geq R+2$ implies that $w_\beta(\tau)\in A_R$

\begin{lemma}\label{lemma: asympotics for generic model geodesics}
Let $C^*$ be the constant in Proposition \ref{proposition: asymptotic formula for excursion times}. In the setting above, we have that
    \[\abs{\lim_{T\to\infty}\frac{\#\{t\in[0,T]: w_\beta(t)\in A_{R} \}} {T}-\lim_{N\to\infty} \frac{ \sum_{n=0}^{N-1}(\mathbf{1}_{\abs{a_1(z)}\geq R}\circ G_d^n)(\beta) }{C^*N}  } = O\left(\frac{1}{R^3}\right)\]
    for almost all $\beta$.
\end{lemma}
\begin{proof}
We first claim the left hand side exists for almost all $\beta$.
       The map $G_d$ is ergodic with respect to $\mu_d$. By Birkhoff's ergodic theorem, almost all $\beta\in K_d$ satisfy
\begin{equation}\label{equation:Birkhoff for Gauss map}
    \begin{split}
        \lim_{N\to\infty} \frac{\#\{n\in\mathbb{N}: n\leq N\text{ and } \abs{a_n(\beta)}> R\}}{N} &= \lim_{N\to\infty} \frac{ \sum_{n=0}^{N-1}(\mathbf{1}_{\abs{a_1(z)}> R}\circ G_d^n)(\beta) }{N} \\
        &= \mu_d\{z\in K_d: \abs{a_1(z)}> R \}.
    \end{split}
\end{equation}
for all $R$ in the countable set $\{\sqrt{n}:n\in\mathbb{N}\}$. Since $ \abs{a_1(z)}^2$ is always integer-valued, each term in the above equality is constant when $R\in \left(\sqrt{n},\sqrt{n+1}\right)$ for all $n$. Hence \eqref{equation:Birkhoff for Gauss map} holds for all real $R>0$.

By Proposition \ref{proposition: asymptotic formula for excursion times}, we obtain that for almost all $\beta \in K_d$, the limit  $\lim_{T\to\infty}\#\{t\in[0,T]: w_\beta(t)\in A_{R} \}/T$ evaluates to
\[\lim_{N\to \infty}\frac{\#\{t\in[0,t_N^*]: w_\beta(t)\in A_{R} \}} {t_N^*}=\lim_{N\to \infty}\frac{\#\{t\in[0,t_N^*]: w_\beta(t)\in A_{R} \}} {C^*N}  \]
for all $R\in \mathbb{Q}_{>0}$. By \eqref{equation: continuity of peaking times}, the left hand side of the above equation evaluates to the same continuous function in $R$ for almost all $\beta \in K_d$. Since the right hand side is monotone in $R$, the above equation continues to hold almost everywhere for all $R>0$.
    By the discussion preceding the lemma, the left hand side of the equation in the statement of the lemma must be bounded from above by 
    \[\lim_{N\to\infty} \frac{\sum_{n=0}^{N-1} (\mathbf{1}_{R-2<a_1(z)< R+2}\circ G_d^n)(\beta) }{C^*N}\]
    when $R$ is sufficiently large.

The set  $\{z\in K_d:R-2<a_1(z)< R+2\}$ lies in the set $\{z\in K_d:1/(R+3)<z< 1/(R-3)\}$. As in Lemma \ref{lemma: chance of large digits}, it is relatively straightforward to prove that $\mu_d\{z\in K_d:1/(R+3)<z< 1/(R-3)\}=O(1/R^3)$. Hence by the Birkhoff ergodic theorem, the lemma follows.

\end{proof}

 A consequence of Lemma \ref{lemma: pushforward is absolutely continuous} is that for almost all $v\in\UT\mathbf{P}_d$, there exists a model tangent vector $w_\beta$ with the same asymptotic behavior, i.e. there exists some $\Delta$ such that $d(w_\beta(t),v(t+\Delta))\to 0$ as $t\to +\infty$\footnote{In fact, the rate of this convergence is exponential}. We then obtain from Lemma \ref{lemma: asympotics for generic model geodesics} that almost all such $v$ satisfy

 \[\abs{\lim_{T\to +\infty}\frac{\#\{t\in[0,T]: v\in A_{R} \}} {T}-\lim_{N\to +\infty} \frac{ \sum_{n=0}^{N-1} (\mathbf{1}_{a_1(z)\geq R}\circ G^n)(\beta) }{C^*N}  } = O\left(\frac{1}{R^3}\right),\]
 where the existence of the first limit was established in Lemma \ref{lemma: average return times to to A_R}.

By Lemma \ref{lemma: chance of large digits} and \eqref{equation:Birkhoff for Gauss map}, the map $G_d$ satisfies
\[ \lim_{N\to +\infty} \frac{ \sum_{n=0}^{N-1} (\mathbf{1}_{a_1(z)\geq R}\circ G^n)(\beta) }{C^*N} = \frac{H}{C^*R^2},\]
where $H$ is the constant in Lemma \ref{lemma: chance of large digits}. Using lemma \ref{lemma: average return times to to A_R} and letting $R\to +\infty$, we  obtain that
\[  4\frac{\pi \lambda(K_d)}{\text{vol}(\mathbf{P}_d)} = \frac{H}{C^*}.\]

In the proof of Theorem \ref{theorem: Gumbel Law for Bianchi Groups} and Theorem \ref{theorem: cusp excursions on Euclidean Bianchi Orbifolds}, we showed that the constant $\kappa_d$ in Theorem is given by
\[ \frac{1}{2}\log\left(\frac{H}{C^*}\right)-\log2.\]
Since $\lambda(K_d) = \sqrt{d}$, we obtain that 
\[\kappa_d = \frac{1}{2}\log\left(\frac{\pi\sqrt{d}}{\text{vol}(\mathbf{P}_d)}\right).\]

When $d=1$ or $3$, the proofs in this section need to be modified to account for the extra generator, see Proposition \ref{proposition: complete set of generators}. The sets $\tilde{A}_R$ from Lemma \ref{lemma: average return times to to A_R} must be modified to 
\[\tilde{A}_{R}=\{ W(\beta,\alpha,0) \in\UT\mathbf{H}^3: \beta\in K_d', \abs{\alpha}> R\},\] where $K_d'$ is defined in Proposition \ref{proposition: Fundamental Domain for Bianchi Groups} for $d=1,3$. Repeating the proof of Lemma \ref{lemma: average return times to to A_R} then shows that \[\lim_{T\to +\infty}\frac{ \#\{t\in[0,T]: v(t)\in A_{R} \} }{T} = 4\frac{\pi \lambda(K_d')}{\text{vol}(\mathbf{P}_d)R^2}+O\left(\frac{1}{R^3}\right).\] Since $\lambda(K_1')=1/2$ and $\lambda(K_3') = 11/3$, we obtain that 
\[\kappa_1 = \frac{1}{2}\log\left(\frac{\pi}{2\text{vol}(\mathbf{P}_d)}\right)\text{, and } \kappa_2 = \frac{1}{2}\log\left(\frac{\pi}{3\text{vol}(\mathbf{P}_d)}\right).\]

\subsection{Diophantine approximation}

One of Sullivan's motivations for proving his logarithm law \cite{Sullivan} was to apply the following theorem.
\begin{proposition}[Sullivan]
    Let $\psi:(0,\infty)\to [0,1)$ be a function such that for some $r>1$, the ratio \[\frac{\sup_{[y,ry]}\psi}{\inf_{[y,ry]}\psi}\] is bounded for all $y\in(0,\infty)$. Let $d$ be a square-free natural number. For almost any complex number $z$ with respect to the Lebesgue measure. There exist infinitely many solutions to the inequality
    \[\abs{z-\frac{p}{q}} \leq \frac{\psi(q)}{q},\]
    where $p,q\in\mathfrak{o}_d$ are such that there exist $r,s\in\mathfrak{o}_d$ with\footnote{This is equivalent to $p$ and $q$ generating coprime ideals.}
    \begin{equation}\label{equation: Bezout Identity}
        rp+sq=1.
    \end{equation}

\end{proposition}
The proof uses a generalised version of Ford balls, first described in \cite{Ford}. Our approach mirrors \cite{Nakada88}, which uses these Ford balls to prove the following.                                            
\begin{proposition}[Nakada]
    Let $k> 0$.  Let $d$ be a square-free natural number. For almost every $z\in \mathbb{C}$, 
    \[ \lim_{T\to\infty} \frac{ \# \left\{\frac{p}{q}: p,q\in\mathfrak{o}_d \text{ and satisfy \eqref{equation: Bezout Identity}, }\abs{z-\frac{p}{q}} \leq \frac{k}{q^2}\text{ and } \abs{q}\leq T \right \}} {\log T}= L_dk^2,\]
where $L_d>0$ does not depend on $k$ and can be explicitly computed.  
\end{proposition}

Fix a non-square integer $d$. For $k>0$, define the set $F_k:=\{x+iy+rj: r>1/2k\}$. We define the set of  \textbf{generalised Ford Balls} $\mathfrak{F}_k$ as the collection of all balls
\[g\cdot F_k ,\]
where $g\in\PSLo$. If $g= \begin{psmallmatrix}
    a&b\\c&d
\end{psmallmatrix}$, then $g\cdot F_k$ is the Euclidean ball tangent to $\mathbb{C}$ at $\frac{a}{c}$ with radius $k/\abs{c}^2$. By \cite{Ford}, these balls are disjoint if $k<\frac{1}{2}$ and only touch tangentially for $k=1/2$. It follows immediately from the definition that $\mathfrak{F}_k$ is invariant under action by $\PSLo$ and the action is transitive. 

\begin{lemma}\label{lemma: Ford ball criterion}
Let $z\in\mathbb{C}$ and $k>0$. We denote the model geodesic in $\mathbb{H}^3$ defined in Subsection \ref{subsection: excursion times} with attracting endpoint $z$ by $\tilde{\gamma}$. The inequality 
\begin{equation}\label{eq:}
    \abs{z-\frac{p}{q}} \leq \frac{k}{q^2}
\end{equation}

holds for $p,q\in\mathfrak{o}_d$ satisfying \eqref{equation: Bezout Identity} if and only if the generalised Ford ball $g\cdot F_k$ tangent at $p/q$ intersects $\tilde{\gamma}$. Equivalently, if $h\in \PSLo$ satisfies $h(p/q)=\infty$, then the above inequality holds if and only if $h\circ \tilde{\gamma}$ intersects $F_k$. 
\end{lemma}
The lemma follows immediately from the fact that $\tilde{\gamma}$ intersects $g\cdot F_k$ if and only if the distance from z to $p/q$ is less than the radius of the ball.

\begin{proof}[Proof of Theorem \ref{theorem: Diophantine Approximation}]
We use the notation of Section \ref{section: cusp excursions}. 
    By Lemma \ref{lemma: Ford ball criterion},
    \[\abs{z-\frac{p}{q}}\leq \frac{1}{2e^{\kappa_d} \abs{q}^{2}\sqrt{T}u}\] if and only if there is some $h\in \PSLo$ with $h(p/q)=\infty$ such that $h\circ \tilde{\gamma}$ intersects $F_k$ with
    \[k = \frac{1}{2e^{\kappa_d} \sqrt{T} u}.\]
    For large $T$, these correspond to high cusp excursions. By the proof of Lemma \ref{lemma: excursion height is given by digits} in Section \ref{section: intersection times and cusp excursion}, there is an increasing sequence $n_m$ of natural numbers and an $M>0$ for which $n_m-n_{m-1}$ is uniformly bounded for $\beta \in K_d\backslash\mathbb{Q}$, $t_{n_{m-1}}<t_{n_m}$, $\abs{a_n}>M$ implies that $n\in \{n_m\}_{m\in\mathbb{N}}$ and $\gamma_{n_m}(t)$ is uniformly bounded from below in Euclidean height when $t\in [t_{n_{m-1}},t_{n_m}]$ for all $m$.
    
    We showed in the proof of Theorem \ref{theorem: cusp excursions on Euclidean Bianchi Orbifolds} that 
    \[\lim_{T\to\infty} (m\circ\pi^{-1}) \left\{ z\in K_d : \sup_{0 \leq t\leq T} e^{h(w_z(t))}\leq e^{\kappa_d} \sqrt{T}u\right\}=e^{-1/u^2},\]
    where $w_z(t)$ is the unit tangent vector to $\gamma(t)$. The proof clearly works if we replace $m\circ\pi^{-1}$ by any probability measure $\nu$ which is equivalent to the Lebesgue measure. The condition $\sup_{0 \leq t\leq T}e^{h(w_z(t))}\leq e^{\kappa_d} \sqrt{T}u$ is equivalent to the geodesic $\gamma_{n_m}$ intersecting $F_k$ with $k=(2e^{\kappa_d}\sqrt{T})^{-1}$ for some $m$ with $t_{n_m}\leq T$. By Lemma \ref{lemma: bounding the intersection time} the intersection time $t_{n_m}=2\log(q_{n_m})+O(1)$. Hence the condition $\sup_{0 \leq t\leq T}e^{h(w_z(t))}\leq e^{\kappa_d} \sqrt{T}u$ is equivalent to the condition
    \[\abs{z-\frac{p_n}{q_n}}\geq \frac{1}{2e^{\kappa_d} \abs{q}^{2}\sqrt{\log Q}u+ E(n)},\]
    where $q\leq Q=e^{T/2}$ and $E$ is a correction term satisfying $E(n)=O(1)$ as $n\to\infty$. The theorem follows.
    
\end{proof}
\section{Intersection Times and Cusp Excursions}\label{section: intersection times and cusp excursion}
In this section, we fill in some details not included in the previous section for either being lengthy but routine calculations, too technical, or too distracting from the main arguments. 

We first give an explicit formula for the intersection of a geodesic with the unit hemisphere. In what follows, let $H(b)\subset \mathbf{H}^3$ be the hemisphere of radius one with centre $b\in\mathbb{C}$.
\begin{lemma}\label{lemma: intersection of geodesic and unit hemisphere}
    Let $\tilde{\eta}$ be a geodesic with endpoints $(\beta,\alpha)$ satisfying $|\beta|<1$ and $\infty>\abs{\alpha} >1$. The intersection point $z+rj$ of $\tilde{\eta}$ and $H(0)$ satisfies 
    \[r= \frac{\sqrt{(1-|\beta|^2)(|\alpha|^2-1)}}{|\alpha|^2-|\beta|^2}|\alpha-\beta|\text{ and } z= \beta +\frac{1-|\beta|^2}{|\alpha|^2-|\beta|^2}(\alpha-\beta).\]
\end{lemma}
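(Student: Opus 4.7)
The plan is a direct calculation using the standard description of geodesics in the upper half-space model. A geodesic with two distinct boundary endpoints $\beta,\alpha\in\mathbb{C}$ is a Euclidean semicircle of radius $|\alpha-\beta|/2$ lying in the vertical half-plane through $\beta$ and $\alpha$ and centred at the midpoint $(\alpha+\beta)/2$ of the base. So every point of $\tilde{\eta}$ can be written uniquely as $z+rj$ with $z=\beta+s(\alpha-\beta)$ for some $s\in[0,1]$ and $r\geq 0$, and the semicircle equation gives $r^2=s(1-s)|\alpha-\beta|^2$. This is the only geometric input; everything else is algebra.

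From here I impose the defining equation of the unit hemisphere $H(0)$, namely $|z|^2+r^2=1$. Expanding $|z|^2=|\beta|^2+2s\,\Re(\bar\beta(\alpha-\beta))+s^2|\alpha-\beta|^2$ and adding $r^2=s(1-s)|\alpha-\beta|^2$, the $s^2$ terms cancel and I am left with
\[
|\beta|^2 + s\bigl(2\Re(\bar\beta(\alpha-\beta))+|\alpha-\beta|^2\bigr)=1.
\]
Then I use the identity $2\Re(\bar\beta(\alpha-\beta))+|\alpha-\beta|^2=|\alpha|^2-|\beta|^2$, obtained by expanding each term in $\alpha,\beta,\bar\alpha,\bar\beta$. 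Note $|\alpha|^2-|\beta|^2>0$ because $|\beta|<1<|\alpha|$, so the equation is nondegenerate and yields
\[
s=\frac{1-|\beta|^2}{|\alpha|^2-|\beta|^2},\qquad 1-s=\frac{|\alpha|^2-1}{|\alpha|^2-|\beta|^2}.
\]

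Substituting this value of $s$ into $z=\beta+s(\alpha-\beta)$ gives the claimed formula for $z$, and substituting into $r=\sqrt{s(1-s)}\,|\alpha-\beta|$ gives the claimed formula for $r$. The hypotheses $|\beta|<1<|\alpha|$ guarantee both $s\in(0,1)$ and $r>0$, so this intersection point is unique, lies on the upper hemisphere, and lies between the two endpoints on $\tilde{\eta}$. There is no real obstacle here; the only thing to watch is the sign/positivity check that makes the intersection point well defined, and the compact algebraic identity $2\Re(\bar\beta(\alpha-\beta))+|\alpha-\beta|^2=|\alpha|^2-|\beta|^2$ that does all the work.
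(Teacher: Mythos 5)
Your proof is correct and follows exactly the same route as the paper: parametrise the semicircle by $z=\beta+s(\alpha-\beta)$, $r^2=s(1-s)|\alpha-\beta|^2$, impose $|z|^2+r^2=1$, note the quadratic terms cancel, and solve for $s$. The only cosmetic difference is that you spell out the simplifying identity $2\Re(\bar\beta(\alpha-\beta))+|\alpha-\beta|^2=|\alpha|^2-|\beta|^2$ and the positivity check, which the paper leaves implicit.
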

\begin{proof}
    The geodesic $\tilde{\eta}$ is the half-circle from $\beta$ to $\alpha$ perpendicular to the complex plane. This can be parametrised by 
    \begin{equation}\label{equation: parametrising a circle}
        \begin{split}
            [0,1]\to \mathbf{H}: t\mapsto z(t)+r(t)j\text{, where } z(t) &= \beta+t(\alpha-\beta)\text{ and } \\
            r(t)&=\sqrt{t(1-t)}|\alpha-\beta| .
        \end{split}
    \end{equation}
    This will intersect $H(0)$ if and only there exists a $t'\in [0,1]$ satisfying $|z(t')|^2+|r(t')|^2=1$. Since $|z(t')|^2 =z(t')\overline{z(t')}$, we obtain using \eqref{equation: parametrising a circle} that this relation can be written as 
    \[ |\beta|^2+{t'}^2|\alpha-\beta|^2 + 2{t'}\Re((\alpha-\beta)\overline{\beta})+{t'}(1-{t'})|\alpha-\beta|^2=1.\]
    The terms involving ${t'}^2$ cancel. Solving for $t'$ and substituting this into \eqref{equation: parametrising a circle} completes the proof. 
\end{proof}
The following lemma is about the Möbius transformations used to generate $\{\tilde{\gamma}_n\}_n$ in Section \ref{section: cusp excursions}.

\begin{lemma}\label{lemma: iterated Möbius transformations}
    Let $\beta = [0,a_1,a_2,\ldots]$. For $n\in\mathbb{N}$, let
    \[P(n,\beta)= (T^{(-1)^{n-1} a_n}S) \ldots (T^{-a_2}S)(T^{a_1}S) \in \PSLo.\]
    Then 
    \[P(n,\beta) = \begin{pmatrix}
        q_n & -p_n\\
        -(-1)^{n-1}q_{n-1} & (-1)^np_{n-1}
    \end{pmatrix}\cdot(\pm I),\]
    where $p_n/q_n = [0;a_1;\cdots;a_n]$ is the $n$-th convergent of $\beta$, and we recall that each element of $\PSL$ has two representatives in $\SL$ which are additive opposites.
\end{lemma}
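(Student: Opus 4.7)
The plan is a straightforward induction on $n$, working in $\SLC$ and only passing to $\PSLC$ at the end so that the sign ambiguity $\pm I$ absorbs whatever discrepancies arise along the way.

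For the base case $n=1$, a direct multiplication gives
\[
T^{a_1}S \;=\; \begin{pmatrix} 1 & a_1 \\ 0 & 1 \end{pmatrix}\begin{pmatrix} 0 & 1 \\ -1 & 0 \end{pmatrix} \;=\; \begin{pmatrix} -a_1 & 1 \\ -1 & 0 \end{pmatrix},
\]
while the convergent recursion \eqref{equation: convergents} starting from $a_0 = 0$ gives $p_0 = 0$, $q_0 = 1$, $p_1 = 1$, $q_1 = a_1$. Thus the matrix on the right of the lemma reads $\bigl(\begin{smallmatrix} a_1 & -1 \\ 1 & 0 \end{smallmatrix}\bigr)$, which equals $-I \cdot T^{a_1}S$, confirming the base case in $\PSLC$.

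For the inductive step, suppose the formula holds at level $n-1$. Since the ambiguity $\pm I$ is central, it suffices to compute modulo an overall sign, so assume
\[
P(n-1,\beta) \;=\; \begin{pmatrix} q_{n-1} & -p_{n-1} \\ (-1)^{n-2}q_{n-2} & (-1)^{n-1}p_{n-2} \end{pmatrix}.
\]
Multiplying on the left by
\[
T^{(-1)^{n-1}a_n}S \;=\; \begin{pmatrix} (-1)^{n}a_n & 1 \\ -1 & 0 \end{pmatrix}
\]
and using the recursions $p_n = a_np_{n-1}+p_{n-2}$ and $q_n = a_nq_{n-1}+q_{n-2}$ yields the four entries
\[
(-1)^n q_n,\quad -(-1)^n p_n,\quad -q_{n-1},\quad p_{n-1},
\]
i.e.\ $(-1)^n$ times the target matrix (after checking the two parities of $n$ separately, noting that $(-1)^{n-1}$ and $(-1)^n$ swap in the bottom row). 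Passing to $\PSLC$ removes this overall factor $(\pm 1)$ and completes the induction.

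The computation is entirely mechanical, so I do not anticipate any real obstacle; the one thing to watch is that the sign $(-1)^{n-1}$ in the exponent of $T$ interacts nontrivially with the parities of the bottom-row entries, and one should verify both parities of $n$ to be confident that the single ``$\pm I$'' ambiguity suffices to reconcile the two sides.
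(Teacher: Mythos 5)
Your proof is correct and follows the same route as the paper: a direct check of the base case $n=1$ (where the sign $-I$ already appears), followed by induction using the convergent recursion \eqref{equation: convergents}, with the central $\pm I$ absorbing the accumulating sign $(-1)^n$. The paper's proof is simply a one-line indication of this same induction.
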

\begin{proof}
    Note that \[T^{\pm a}S = \begin{pmatrix}
        \mp a & 1\\
        -1 & 0
    \end{pmatrix}.\]
In particular it is clear the lemma holds for $P(1,\beta)$. The general case follows from induction using the recursive formula for the associated quotient pair of sequences \eqref{equation: convergents}. 
\end{proof}

Given two sequences $(a_n)_n$ and $(b_n)_n$ of positive numbers, we write $a_n\asymp b_n$ if $a_n = O(b_n)$ and $b_n = O(a_n)$ as $n\to \infty$. We establish a bounded equivalence between the intersection times and an expression involving associated quotient pair of sequences. 
\begin{proof}[Proof of Lemma \ref{lemma: bounding the intersection time}]
    Let $n\in\mathbb{N}$. By symmetry of $G_d$ under reflection, we have for almost all $\beta$ that 
    \begin{equation}\label{equation: endpoints under iteration}
        (-1)^n\hat{G}_d^n(\beta,\infty) = (P(n,\beta)\cdot\beta, P(n,\beta)\cdot\infty).
    \end{equation}
    By Lemma \ref{lemma: iterated Möbius transformations} we have that 
    \[(-1)^n\hat{G}_d^n(\beta,\infty) = (-1)^n\left(G_d^n(\beta),\frac{-q_n}{q_{n-1}}\right).\]
    By Lemma \ref{lemma: intersection of geodesic and unit hemisphere} we have that the geodesic $\tilde{\gamma}_n$ intersects the hemisphere $H(0)$ at some point $z_n+r_nj$ with 
    \begin{equation}\label{equation: pilot geodesic intersecting with hemisphere}
        \begin{split}
            r_n&=\frac{\abs{ G_d^n(\beta)+\frac{q_n}{q_{n-1}}}}{\abs{\frac{q_n}{q_{n-1}}}^2 - |G_d^n(\beta)|^2}\sqrt{\left(1-|G_d^n(\beta)|^2\right)\left(\abs{\frac{q_n}{q_{n-1}}}^2-1\right)} \text{ and}\\
            z_n &= (-1)^n\left(G_d^n(\beta)-\frac{1-|G_d^n(\beta)|^2}{\abs{\frac{q_n}{q_{n-1}}}^2 - |G_d^n(\beta)|^2} \left( G_d^n(\beta)+\frac{q_n}{q_{n-1}}\right)\right).
        \end{split}    
    \end{equation}

    This is equivalent to the geodesic $\tilde{\gamma}_0 = P(n,\beta)^{-1}\tilde{\gamma}_n$ intersecting the hemisphere $P(n,\beta)^{-1}H(0)$ at some point 
    \[z' +r'j := P(n,\beta)^{-1}\cdot (z_n+r_nj).\]
    Since this is a point on $\tilde{\gamma}_0$, $z' = \beta$. 
    By Lemma \ref{lemma: iterated Möbius transformations} we see that $P(n,\beta)^{-1} = \begin{pmatrix}
        (-1)^np_{n-1} & p_n\\
        (-1)^nq_{n-1} & q_n
    \end{pmatrix}$,
    so by \eqref{equation: Mobius transformation in coordinates} we have that 
    \begin{equation}\label{equation: Euclidean height of return point}
        r' = \frac{1}{r_n^{-1}|(-1)^nq_{n-1}z_n+q_n|^2+r_n|q_{n-1}|^2}.
    \end{equation}
    Some rearranging of terms shows that $|(-1)^nq_{n-1}z_n+q_n|$ evaluates to
    \begin{equation*}
        \begin{split}
            &|q_{n-1}|\abs{\left(G_d^n(\beta)+\frac{q_n}{q_{n-1}}\right) -\frac{1-|G_d^n(\beta)|^2}{\abs{\frac{q_n}{q_{n-1}}}^2 - |G_d^n(\beta)|^2} \left( G_d^n(\beta)+\frac{q_n}{q_{n-1}}\right)}\\
            &=|q_{n-1}|  \frac{\abs{\frac{q_n}{q_{n-1}}}^2-1}{\left(\abs{\frac{q_n}{q_{n-1}}}^2 - |G_d^n(\beta)|^2\right)} \abs{ G_d^n(\beta)+\frac{q_n}{q_{n-1}}}.
        \end{split}
    \end{equation*}
    Since $G_d^n(\beta)\in K_d$ and $\sup_{z\in K_d}\abs{z}<1$, we obtain from the above equation and \eqref{equation: pilot geodesic intersecting with hemisphere} that
    \begin{equation}\label{equation: two asympotes}
        \begin{split}
                |(-1)^nq_{n-1}z_n+q_n| &\asymp \abs{q_{n-1}}\left(\abs{ \frac{q_n}{q_{n-1}} }^2 -1\right)\abs{\frac{q_{n-1}}{q_n}}\text{ and} \\
                r_n &\asymp \abs{\frac{q_{n-1}}{q_n}}\left(\abs{ \frac{q_n}{q_{n-1}} }^2 -1\right)^{1/2}.
        \end{split}
    \end{equation}
    
    % We then see that $r_n^{-1}|(-1)^nq_{n-1}z_n+q_n|^2$ is equal to 
    % \[|q_{n-1}|^2  \frac{\left(\abs{\frac{q_n}{q_{n-1}}}^2-1\right)^{3/2}}{\left(\abs{\frac{q_n}{q_{n-1}}}^2 - |G^n(\beta)|^2\right)\sqrt{\left(1-|G^n(\beta)|^2\right)}}\abs{  G^n(\beta)+\frac{q_n}{q_{n-1}}}.\]
    % Since $G^n(\beta)\in K_d$ and $\sup_{z\in K_d}\abs{z}<1$, we obtain that 
    % \[r_n^{-1}|(-1)^nq_{n-1}z_n+q_n|^2\asymp |q_{n-1}|^2  \frac{\left(\abs{\frac{q_n}{q_{n-1}}}^2-1\right)^{3/2}}{\abs{\frac{q_n}{q_{n-1}}}^2}\abs{\frac{q_n}{q_{n-1}}}.\]
    Since $r_n<1$, we obtain by \eqref{equation: Euclidean height of return point} that $1/r'\asymp r_n^{-1}|(-1)^nq_{n-1}z_n+q_n|^2$. Recall that we defined the intersection times $t_n$ in Definition \ref{definition: intersection time} as the time $\tilde{\gamma_n}$ intersects $H(0)$. By substituting the bounded equivalencies in \eqref{equation: two asympotes} we see that
    \[e^{t_{n}} = \frac{1}{r'}\asymp \frac{|q_{n-1}|^3}{{\abs{q_n}}}  \left(\abs{\frac{q_n}{q_{n-1}}}^2-1\right)^{3/2} \]
    as $n\to\infty$. The lemma follows from the identity $\frac{|q_{n-1}|^3}{{\abs{q_n}}} =\abs{q_n}^2\left(\abs{\frac{q_{n-1}}{q_n}}^{2}\right)^{3/2}$.
\end{proof}
The following lemma is a fast growth property of the denominator of continued fraction expansions.
\begin{lemma}\label{lemma: subsequence of multiplicative growth}
    For each $d\in\{1,2,3,5,7,11\}$, there exists a number $r_d>1$ and $M_d\in\mathbb{N}$ such that for each $\beta\in K_d\backslash\mathbb{Q}$, there exists an increasing sequence of integers $(n_k)_{k\in\mathbb{N}}$ satisfying
    \[1\leq n_{k+1}-n_k\leq M_d \text{ and } \abs{\frac{ q_{n_k} }{q_{n_k-1}}} \geq r_d  \]
    for all $k\in \mathbb{N}$.
\end{lemma}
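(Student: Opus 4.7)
The plan is to exploit the uniform expansion $|G'(z)|=|z|^{-2}\geq s_d^{-2}$, where $s_d:=\sup_{z\in K_d}|z|$, together with a bounded-distortion formula relating $|q_n|$ to the orbit of $\beta$ under $G$. Inspection of the explicit Dirichlet domains in Section~\ref{section: Complex Continued Fractions} shows $s_d<1$ for each admissible $d$ (for instance $s_1=1/\sqrt{2}$ and $s_3=1/\sqrt{3}$), and this geometric observation is essentially the only input required; the remainder is a telescoping argument.

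First I would derive the identity $|q_n+q_{n-1}G^n\beta|=\prod_{i=0}^{n-1}|G^i\beta|^{-1}$. Solving the standard convergent identity $z=(p_n+p_{n-1}G^n(z))/(q_n+q_{n-1}G^n(z))$ for $G^n(z)$ yields $G^n(z)=(zq_n-p_n)/(p_{n-1}-zq_{n-1})$; differentiating and invoking \eqref{equation: determinant of matrix of convergents} gives $(G^n)'(z)=(-1)^n/(p_{n-1}-zq_{n-1})^2$, and a short rearrangement shows $|p_{n-1}-zq_{n-1}|=|q_n+q_{n-1}G^n(z)|^{-1}$. The chain rule together with $G'(z)=-z^{-2}$ also gives $|(G^n)'(\beta)|=\prod_{i=0}^{n-1}|G^i\beta|^{-2}$, and the two expressions for $|(G^n)'|$ combine into the desired identity. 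Since $|q_{n-1}/q_n|<1$ by Proposition~\ref{proposition: growth of denominator} and $|G^n\beta|\leq s_d$, the triangle inequality sandwiches $|q_n+q_{n-1}G^n\beta|$ between $(1-s_d)|q_n|$ and $(1+s_d)|q_n|$, giving the bounded-distortion estimate
\[
\frac{1}{(1+s_d)\prod_{i=0}^{n-1}|G^i\beta|}\leq|q_n|\leq\frac{1}{(1-s_d)\prod_{i=0}^{n-1}|G^i\beta|}.
\]

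Telescoping this, for any $N\geq 0$ and $M\geq 1$ I would obtain
\[
\left|\frac{q_{N+M}}{q_N}\right|\geq\frac{1-s_d}{1+s_d}\prod_{i=N}^{N+M-1}|G^i\beta|^{-1}\geq\frac{1-s_d}{1+s_d}\,s_d^{-M}.
\]
Now fix any $r_d\in(1,1/s_d)$, for instance $r_d=(1+s_d^{-1})/2$, which is well-defined because $s_d<1$. If one had $|q_n/q_{n-1}|<r_d$ for every $n\in\{N+1,\ldots,N+M\}$, a second telescope would give $|q_{N+M}/q_N|<r_d^M$, and combining this with the lower bound above forces $(r_ds_d)^M>(1-s_d)/(1+s_d)$. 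Since $r_ds_d<1$ the left hand side tends to $0$ as $M\to\infty$, so this inequality cannot hold once $M$ exceeds
\[
M_d:=\left\lceil\frac{\log\bigl((1+s_d)/(1-s_d)\bigr)}{\log\bigl(1/(r_ds_d)\bigr)}\right\rceil+1.
\]
Hence every window of $M_d$ consecutive indices contains at least one index $n$ with $|q_n/q_{n-1}|\geq r_d$, and greedily selecting such indices produces the required sequence $(n_k)$ with $1\leq n_{k+1}-n_k\leq M_d$.

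The main obstacle is the analytic identity in the second paragraph; once this and the geometric bound $s_d<1$ are in hand, the remainder is entirely elementary. Both of these ingredients also underpin the Markov analysis in \cite{Ei22} that the author invokes, so in a sense the content of the lemma is already implicit in that reference.
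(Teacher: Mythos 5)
Your proof is correct and follows essentially the same strategy as the paper: both establish that $|q_n|$ is comparable, up to bounded multiplicative constants, to the reciprocal Birkhoff product $\prod_{i<n}|G^i\beta|^{-1}$, then telescope to get $|q_{N+M}/q_N|\gtrsim s_d^{-M}$, and conclude by a pigeonhole/contradiction argument with any $r_d\in(1,1/s_d)$. The only material difference is cosmetic: you derive the key identity $|q_n+q_{n-1}G^n\beta|=\prod_{i=0}^{n-1}|G^i\beta|^{-1}$ in one stroke from the chain rule applied to $(G^n)'$, whereas the paper obtains the same bounded-distortion estimate by combining the recursion-based identity for $|q_n\beta-p_n|$ with Proposition 5 of \cite{Ei22} (a Ford-sphere argument). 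Your derivation is somewhat more self-contained, and you also make $M_d$ explicit rather than ending with a qualitative contradiction, but these are refinements of presentation, not a different route.
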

\begin{proof}
    We estimate the numerator $q_n$ of the continued fraction expansion of in terms of $|q_n\beta -p_n|$. We first note the identity 
    \begin{equation}\label{equation: multiplying Gauss map}
        \abs{q_n\beta -p_n} = \abs{\beta}\abs{G_d(\beta)}\abs{G_d^2(\beta)}\cdots \abs{G_d^n(\beta)}.
    \end{equation}
    holds. This can be proved by induction. It can also be proved by considering the transformation $P(n+1,\beta)$ defined in Lemma \ref{lemma: iterated Möbius transformations} and evaluating the derivative of $z\mapsto P(n+1,\beta)\cdot z$ at $z=\beta$ directly and with the chain rule. Another fairly straightforward proof by induction shows that
    \begin{equation}\label{equation: quality of approximation}
        \abs{q_n}\abs{\beta q_n -p_n} = \frac{1}{\abs{G_d^{n+1}(\beta)+\frac{q_{n+1}}{q_n}}}.
    \end{equation}
    We refer to Proposition 3.3 (iii) in \cite{Dani14} for proof. 
    If $A_d=\sup_{z\in K_d} |z|$, we see by the triangle inequality and the reverse triangle inequality that
    \begin{equation}\label{eq: denominators grow like birkhoff product}
        \frac{1}{2\abs{\beta q_n -p_n} }\leq\abs{q_{n+1}}\leq \frac{1}{(1-A_d)\abs{\beta q_n -p_n} }.
   \end{equation}
   From the above and \ref{equation: multiplying Gauss map}, we obtain for all $m,n\in\mathbb{N}$ that
   \begin{equation}\label{equation: bounded equivalence to Birkhoff product}
         \abs{ \frac{q_{n+m}}{q_n} }\geq \frac{1-A_d}{2}\frac{1}{\abs{G_d^{n+1}(\beta)}\cdots \abs{G_d^{n+m}(\beta)}}\geq \frac{1-A_d}{2}A_d^{-m} .
   \end{equation}
Let $1<r_d < A_d^{-1}$.  If the lemma were false, there would be arbitrarily large intervals $\{n,n+1,\ldots, n+m\}$ on which some $\beta\in K_d$ satisfies $\abs{q_{n+1+i}/q_{n+i}}\leq r_d$ for $0\leq i <m$ and hence $\abs{q_{n+m}/q_{n}}\leq r_d^{m}$, which contradicts $\eqref{equation: bounded equivalence to Birkhoff product}$ for $m$ large enough.
\end{proof}

\begin{proof}[Proof of Proposition \ref{proposition: asymptotic formula for excursion times}]
    Recall that we defined the excursion times $t_n^*$ in Definition \ref{definition: excursion times} to be equal to $\max_{m\leq n}t_m$. We wish to prove that for almost all $\beta\in K_d$, the limit $\lim_{n\to\infty}t_n^*$ exists and is equal to a constant $C^*$ independent of $\beta$. 
    We first note that by taking the logarithm in \eqref{equation: multiplying Gauss map} and using \eqref{equation: bounded equivalence to Birkhoff product}, we obtain the relation
    \[\lim_{n\to\infty} \frac{\log \abs{q_n}}{n} = \lim_{n\to\infty}\frac{1}{n}\sum_{k=1}^n\log\left(\abs{G_d^k(\beta)}\right),\]
    provided the latter limit exists. However, since the $\log$ function is integrable on $K_d$ with respect to the Lebesgue measure, it is integrable with respect to $\mu_d$. By the Birkhoff ergodic theorem, almost all $\beta \in K_d$ have denominators $(q_n)_n$ satisfying
    \[\lim_{n\to\infty} \frac{\log \abs{q_n}}{n} = \int \log |z| d\mu_d(z) \in (0,\infty).\]
    It thus suffices to prove that 
    \[\lim_{n\to \infty} \frac{t_n^*}{n}= 2\lim_{n\to\infty} \frac{\log \abs{q_n}}{n} .\]
    Lemma \ref{lemma: subsequence of multiplicative growth} shows that there exists a constant $M_d$ and a sequence $(c(n))_{n}\in\mathbb{N}^\mathbb{N}$ such that for all $n\in\mathbb{N}$ we have $0\leq n-c(n)\leq M_d$  and $\abs{q_{c(n)}/q_{c(n)-1}}\geq r_d$ for some constant $r_d>1$.
    It follows from Lemma \ref{lemma: bounding the intersection time} that
    \[t_{c(n)}\asymp 2\log\abs{q_{c(n)}}\asymp t^*_{c(n)},\]
    where the latter bounded equivalence follows from the fact that $t^*_{c(n)}= t_m$ for some $m<c(n)$, with $t_m$ bounded from above by $D+2\log\abs{q_{m}}$, with $D$ the constant appearing in Lemma \ref{lemma: bounding the intersection time}. Consequently, $\lim_{n\to\infty}\abs{ 2q_{c(n)}}/c(n) =\lim_{n\to\infty}t^*_{c(n)}/c(n)$. 
    However, both $\abs{q_n}$ and $t_n^*$ are monotone sequences, so we may replace $c(n)$ with $n$ in this equality and obtain the proposition.
\end{proof}

\begin{proof}[Proof of Lemma \ref{lemma: excursion height is given by digits}]
    This proof is somewhat technical in nature, so we first briefly sketch the main idea. Recall that for $v,w\in\UT\mathbf{P}_d$, we denote by $d(v,w)$ and $h(v)$ the hyperbolic distance between the base point of $v$ and respectively the base point of $w$, and the projection $\mathcal{H}$ of the horocycle $\{z+j:z\in\mathbb{C}\}\subset \mathbb{H}^3$ to  $\mathbb{P}_d$. By \eqref{equation: iterated Gauss} we have that the Euclidean height of the apex of $\tilde{\gamma}_k$ is approximately equal to $|a_k|/2$, where we define the Euclidean height of a point $z+rj\in \mathbb{H}^3$ to be equal to $r$. The hyperbolic distance to the horocycle $\{z+j:z\in\mathbb{C}\}$ is the logarithm of that. If $\abs{a_k}>2$, then $t_k>t_{k-1}$ and $\tilde{\gamma}_k$ reaches this apex in the time interval $[t_{k-1},t_{k}]$. Intuitively speaking, the lemma should follow by taking the maximal apex for all the first $n$ intersections.
    
    The argument above allows us to understand the behaviour of $\gamma$ only for certain intervals $[t_{k-1},t_{k}]$ corresponding to large values of $\abs{a_k}$. The proof relies on controlling the behaviour of $\gamma$ outside these intervals. We shall accomplish this by constructing an increasing sequence of integers $(m_k)_k$ for which the geodesic segments 
    \[\tilde{\gamma}_{m_k}([m_{k-1},m_k])\] 
    are uniformly bounded from below in Euclidean height, which indeed implies that the distance from $\gamma(t)$ to $\mathcal{H}$ only when $\tilde{\gamma}_{m_k}(t)$ has large Euclidean height for some $k$.

     Fix $r_d>1$ as in Lemma \ref{lemma: subsequence of multiplicative growth} and let $(n_k)_{k}$ be the sequence of all integers satisfying $\abs{q_{n_k}/q_{n_k-1}}\geq r_d$. Define the subsequence $(m_k)_k$ of $(n_k)_k$ by removing the entries $n_k$ for which 
    \[t_{n_k}\leq t_{n_{k-1}},\]
    i.e. $m_k$ is the largest subsequence for which $t_{m_k}$ is increasing\footnote{``largest'' in this context means that any other subsequence satisfying this property is a subsequence of $m_k$.}.
    Note that by Lemma \ref{lemma: bounding the intersection time} we have that 
    \begin{equation}\label{equation: asymptotics for subsequence of intersection times}
        t_{n_k} \asymp  2\log |q_{n_k}|^2
    \end{equation}
    as $k\to +\infty$ for all $\beta\in K_d\backslash \mathbb{Q}$. Since $\abs{q_{n_k}}$ increases exponentially in $k$, the number of consecutive entries that can be removed is bounded, and hence
    \[m_{k+1}-m_k \]
    is a bounded sequence as well. In fact, since the constant $r_d>1$ of Lemma \ref{lemma: subsequence of multiplicative growth} is a lower bound for the exponential growth rate, we can bound this sequence independently of the endpoint $\beta$. Furthermore, there exists an  $M>0$ such that if $\abs{a_n}>M$ for some $n\in\mathbb{N}$, then $\abs{q_n/q_{n-1}}\geq r_d$ and $t_n\geq t^*_{n-1}$. In other words,
    \begin{equation}\label{equation: large digits appear in m_k}
        n\in\{m_k\}_k \text{ if } |a_n|> M. 
    \end{equation}
    
    We first study the behaviour of the geodesic $\gamma$ defined in the introduction to Subsection \ref{subsection: excursion times} on the intervals $[t_{m_{k-1}},t_{m_k}]$. To simplify the notation somewhat, let $\mathfrak{r}=m_{k-1}$ and $\mathfrak{s}=m_k$.
    Take the sequence of lifted geodesics $\{\tilde{\gamma}_n\}_n$ as in Subsection \ref{subsection: excursion times}. We then have
    \begin{equation*}
        \begin{split}
            \tilde{\gamma}_{\mathfrak{s}} = T^{-(-1)^\mathfrak{s} a_\mathfrak{s}}Q_\beta(k)\tilde{\gamma}_{\mathfrak{r}}\text{, where }\\
            Q_\beta(k)=\left(S\prod_{l=\mathfrak{r}+1}^{\mathfrak{s}-1} T^{-(-1)^l a_l}S\right)\tilde{\gamma}_{\mathfrak{r}}.
        \end{split}
    \end{equation*}
    
    % Define the geodesic $\tilde{\gamma}^*$ by not performing the final translation, i.e
    % \[\tilde{\gamma}^*= Q_\beta(k)\tilde{\gamma}_{r} = T^{(-1)^{s} a_{s}}\tilde{\gamma}_{s}.\]
    
    Let $w_1^*(k)$ be the intersection of $\tilde{\gamma}_{\mathfrak{r}}$ with $H(0)$ at time $t_{\mathfrak{r}}$. On the interval $[t_{\mathfrak{r}},t_{\mathfrak{s}}]=[t_{m_{k-1}},t_{m_k}]$, the geodesic $\tilde{\gamma}_{\mathfrak{s}}$ travels from 
    \[w_1(k):=T^{-(-1)^{\mathfrak{s}} a_{\mathfrak{s}}}Q_\beta(k)w_1^*(k)  \]
    to its intersection $w_2(k)$ with $H(0)$.
    By Lemma \eqref{lemma: intersection of geodesic and unit hemisphere} and the fact that $\abs{q_{\mathfrak{r}}/q_{\mathfrak{r}-1}},\abs{q_{\mathfrak{s}}/q_{\mathfrak{s}-1}}> r_d$, we have that the Euclidean heights of $w_1^*(k)$ and $w_2(k)$ are bounded positively from below for all $k$. The implication \eqref{equation: large digits appear in m_k} means only a finite number of digits can appear in the definition of $Q_\beta(k)$.

    Boundedness of the sequence $m_k-m_{k-1}$ therefore implies that the set \[\{Q_\beta(k): k\in\mathbb{N},\beta\in K_d\backslash\mathbb{Q}\}\] is finite. This shows that the Euclidean height of $w_1(k)$ is bounded from below by a positive number over all values of $k$ and $\beta$. The geodesic segments $\tilde{\gamma}_{m_k}([m_{k-1},m_k])$ are therefore uniformly bounded from below in Euclidean height for all $k\in\mathbb{N}$ and $\beta\notin\mathbb{Q}$. 
    We note that for a point $z'+r'j\in \mathbb{H}^3$ with $r'>1$, the lift of $\mathcal{H}$ which is closest to the point is the horocycle $\{z+j:z\in\mathbb{C}\}$ and the distance between them is $\log r'$. As we previously noted when sketching the proof, we have that if $\abs{a_{m_k}}>2$, the geodesic $\tilde{\gamma}_{m_k}$ reaches its apex at a time $t$ with $t\in [t_{\mathfrak{s}-1},t_{\mathfrak{s}}]\subset[t_{m_{k-1}},t_{m_k}] $. Hence if $z_k+h_kj$ are the apexes of the geodesics $\tilde{\gamma}_{m_k}$, then 

    \[ \abs{\max_{t\in [0,t_{m_k}]} e^{h(w_\beta(t))}-\sup_{l\leq k} h_l }\]
     is uniformly bounded for all $\beta\in K_d\backslash\mathbb{Q}$ and $k\in\mathbb{N}$. The lemma follows from \eqref{equation: large digits appear in m_k} and the fact that $\abs{h_k - \abs{a_{m_k}}/2} \leq 1$.  
\end{proof}

Theorem \ref{theorem: Gumbel Law for Bianchi Groups} is a statement regarding to the asymptotic behaviour of the digits $a_n$ for large $n$, while Theorem \ref{theorem: cusp excursions on Euclidean Bianchi Orbifolds} concerns the asymptotic behaviour of geodesic for large times $t$. The following slightly technical version of Theorem \ref{theorem: Gumbel Law for Bianchi Groups} will allow us to account for errors in converting from $n$ to $t$. 

\begin{proposition}\label{proposition: limit theorem for complex continued fractions but complicated}
    Let $\eta$ be a probability measure on $K_d$ which is absolutely continuous with respect to the Lebesgue measure. Let 
    \[k:\mathbb{N}\times K_d \to \mathbb{N}: (n,\beta)\mapsto k(n,\beta) \]
    be a Borel-measurable function such that $\lim_{n\to\infty}k(n,\beta)/n=1$ for almost all $\beta$. Let $E:\mathbb{N}\times K_d\to\mathbb{R}$ also be a Borel-measurable function which is uniformly bounded outside of a Lebesgue-null set. Then 
    \begin{equation}\label{equation: limit theorem for complex continued fractions but complicated}
            \lim_{N\to\infty} \eta\left\{z\in K_d: \max_{1\leq n\leq k(N,\beta)}|a_n(z)| \leq C_du\sqrt{N}+E(k(N,\beta),\beta) \right\} = e^{-1/u^2} ,
    \end{equation}
 where $C_d$ is the constant in Theorem \ref{theorem: Gumbel Law for Bianchi Groups}.
\end{proposition}
\begin{proof}
     We claim that there is a sequence $k_n\in\mathbb{N}$ such that \[\liminf_{n\to +\infty} \eta\left\{z\in K_d: n-k_n\leq k(n,\beta)\leq n+ k_n \right\}=1\] and $k_n/n\to0 $ as $n\to +\infty$. Indeed, define the function ${k^*}(n,\beta):= \min\left\{\frac{k(n,\beta)}{n},2\right\}$. By Chebyshev's inequality, we have that
     \begin{equation}\label{equation: Chebyshev}
            \eta\left\{z\in K_d: \abs{ {k^*}(n,\beta) -\overline{k}_n^*}\geq a\sigma_n \right\} \leq \frac{1}{a^2}
     \end{equation}
  
    for all $a>0$, where ${\overline{k}_n^*} =\int_{K_d} {k^*}(n,\beta)d\eta(\beta)$ and $\sigma_n^2 =\int_{K_d} {k^*}(n,\beta)^2d\eta(\beta) -{\overline{k}_n^*}^2 $. By the dominated convergence theorem, $\lim_{n\to+\infty}{\overline{k}_n^*}  =1$ and $\lim_{n\to+\infty} \sigma_n=0$. There therefore exists a sequence $a_n$ such that $\lim_{n\to+\infty} a_n =+\infty$ and $\lim_{n\to+\infty}a_n\sigma_n=0$. For $n$ large enough such that $a_n\sigma_n \leq 1 $ and $\abs{{k_n}^*-1}\leq 1$, we have that the condition $\abs{ {k^*}(n,\beta) -\overline{k}_n^*}\geq a\sigma_n$ is equivalent to the condition
    \[\abs{ k(n,\beta) -n\overline{k}_n^*}\geq a\sigma_n n.\]
    Since ${\overline{k}_n^*} $ approaches unity for large $n$, we note that by \eqref{equation: Chebyshev} the claim is satisfied for $k_n=\lceil a_n\sigma_n n +n\abs{k_n^*-1}\rceil$. 

    Let $\epsilon>0$ and let $A>0$ such that $\abs{E}<A$ outside of a null set. By our construction of $k_n$, the
    left hand side of \eqref{equation: limit theorem for complex continued fractions but complicated} is bounded from above by 
     \begin{equation*}
                \limsup_{N\to\infty} \eta\left\{z\in K_d: \max_{1\leq n\leq N-k_N}|a_n(z)| \leq C_du\sqrt{N}+A \right\}.
    \end{equation*}
For sufficiently large $N$, we have that 
\[C_du\sqrt{N}+A \leq C_d(u+\epsilon)\sqrt{\left(N-k_N\right)}\]
and we obtain by Theorem \ref{theorem: Gumbel Law for Bianchi Groups} that the above that left hand side of \eqref{equation: limit theorem for complex continued fractions but complicated} is bounded from above by $e^{-1/(u+\epsilon)^2}$. The lower bound $e^{-1/(u-\epsilon)^2}$ can be proven analogously. We obtain the proposition by letting $\epsilon\to 0$. 
\end{proof}
\begin{proof}[Proof of equality between \eqref{equation: cuspexcursionmanipulations1} and\eqref{equation: cuspexcursionmanipulations2}]

  Let $(t_n^*)_n$ be as in Definition \ref{definition: excursion times}. Proposition \ref{proposition: asymptotic formula for excursion times} states that the time elapsed after $N$ excursions is approximately equal to $NC^*$. 
    If the limits in \eqref{equation: cuspexcursionmanipulations1} exist, then any of them are equal to
\[\lim_{N\to +\infty} (m\circ\pi^{-1})\left\{ \beta\in K_d : \sup_{0 \leq t\leq NC^*} e^{d(w_\beta,w_\beta(t))}\leq e^{\kappa_d} \sqrt{NC^*}u\right\}.\]
Conversely, if the above limit exists and is continuous in $y$, then the limits in \eqref{equation: cuspexcursionmanipulations1} exist and are equal to the above quantity by a similar argument to the last part of the proof of Proposition \ref{proposition: limit theorem for complex continued fractions but complicated}. 
    Define $k(N,\beta)$ to be the largest value of $n'$ such that $t_{n'}^*\leq NC^*$, where $(t^*_n)_n$ is the sequence of excursion times for the geodesic $t\mapsto w_{\beta}(t)$. The quantity $k(N,\beta)$ can be therefore interpreted as the actual number of excursions completed in the expected time it takes to complete $N$ excursions. It follows from the definition that $\lim_{N\to\infty} k(N,\beta)/N = 1$ for almost all $\beta$. Lemma \ref{lemma: excursion height is given by digits} shows that the maximum value of $e^{d(w_\beta,w_\beta(t))}$ attained in these $k(N,\beta)$ excursions is approximately equal to $\frac{1}{2}\max_{1\leq n\leq k(N,\beta)}|a_n(\beta)|$, so we obtain that the expression above is equal to 
    \[\lim_{N\to +\infty} (m\circ\pi^{-1})\left\{ \beta\in K_d : \max_{1\leq n\leq k(N,\beta)}|a_n(\beta)|\leq 2e^{\kappa_d} \sqrt{C^* N}u+ E(k(N,\beta),\beta)\right\},\]
    where $E(N,\beta)$ is the error term in approximating $ \sup_{0 \leq t\leq NC^*} e^{d(w_\beta,w_\beta(t))}$ with $\frac{1}{2}\max_{1\leq n\leq t^*_N}|a_n(\beta)|$.
    By Proposition \ref{proposition: limit theorem for complex continued fractions but complicated} and a rescaling of $y$, we obtain the claim.
\end{proof}
\printbibliography
\end{document}